\newcommand{\rrvert}{\vert}
\newcommand{\rrVert}{\Vert}
\newcommand{\llvert}{\vert}
\newcommand{\llVert}{\Vert}
\renewcommand{\mid}{|}
\newtheorem{proposition}{Proposition}
\newtheorem{theorem}{Theorem}
\newtheorem{corollary}{Corollary}
\def\N{{\mathbb N}}
\def\R{{\mathbb R}}
\def\Z{{\mathbb Z}}
\def\P{{\mathbb P}}
\def\E{{\mathbb E}}
\def\Var{\operatorname{Var}}
\def\diff{\mathrm{d}}
\begin{document}
\begin{frontmatter}

\title{A stochastic analysis of resource sharing with logarithmic weights}
\runtitle{Bandwidth sharing algorithm}

\begin{aug}
\author[A]{\fnms{Philippe}~\snm{Robert}\corref{}\ead[label=e1]{Philippe.Robert@inria.fr}}
\and
\author[B]{\fnms{Amandine}~\snm{V\'eber}\ead[label=e2]{Amandine.Veber@cmap.polytechnique.fr}}
\runauthor{P. Robert and A. V\'eber}
\affiliation{INRIA Paris---Rocquencourt and \'{E}cole Polytechnique}
\address[A]{INRIA Paris---Rocquencourt\\
Domaine de Voluceau\\
78153 Le Chesnay\\
France\\
\printead{e1}} 
\address[B]{Centre de Math\'{e}matiques Appliqu\'{e}es\\
\'{E}cole Polytechnique\\
Route de Saclay\\
91128 Palaiseau Cedex\\
France\\
\printead{e2}}
\end{aug}

\received{\smonth{10} \syear{2013}}
\revised{\smonth{7} \syear{2014}}

%
\begin{abstract}
The paper investigates the properties of a class of resource allocation
algorithms for communication networks: if a node of this network has
$x$ requests to transmit, then it receives a fraction of the capacity
proportional to $\log(1+x)$, the logarithm of its current load. A
detailed fluid scaling analysis of such a network with two nodes is
presented. It is shown that the interaction of several time scales
plays an important role in the evolution of such a system, in
particular its coordinates may live on very different time and space
scales. As a consequence, the associated stochastic processes turn out
to have unusual scaling behaviors. A heavy traffic limit theorem for
the invariant distribution is also proved. Finally, we present a
generalization to the resource sharing algorithm for which the $\log$
function is replaced by an increasing function. Possible
generalizations of these results with $J>2$ nodes or with the function
$\log$ replaced by another slowly increasing function are discussed.
\end{abstract}

%
\begin{keyword}[class=AMS]
\kwd[Primary ]{60K25}
\kwd{60K30}
\kwd{60F05}
\kwd[; secondary ]{68M20}
\kwd{90B22}
\end{keyword}
\begin{keyword}
\kwd{Stochastic networks}
\kwd{fluid limits}
\kwd{time scales}
\end{keyword}
\end{frontmatter}

\section{Introduction}
The resource allocation problem considered in this paper involves $J$
nodes which have access to a common shared resource, for example, a~communication channel or a processing unit. The resource is assumed to
have a fixed capacity, say $1$. The resource is shared among nodes in
the following way: for $1\leq j\leq J$, if node $j$ has $n_j$ requests
pending, it receives the instantaneous fraction of capacity
%
\begin{equation}
\label{algog} \frac{f(n_j)}{f(n_1)+ f(n_2)+\cdots+f(n_J)}
\end{equation}
from the resource. The algorithm is thus defined by the function
$x\mapsto f(x)$. There are several situations where the capacity is
allocated in this way. It should be noted that our results are proved
in the case where $J=2$. The general case $J\geq2$ is briefly
sketched. See Section~\ref{Multisec} for the conjectured behavior of
this system.

\subsection{Saturated node of the Internet} In this context, the nodes
correspond to TCP flows with different sources and destinations. The
resource here is the processing time of a fixed router on the path of
these flows. The packets of a flow are queued in the buffer of the
router until they are routed to the next stage of their path.
Congestion is simply the situation when the buffer is full and,
therefore, incoming packets are lost. Because of the TCP protocol, a
given flow will increase or decrease the rate at which it sends the
packets, depending on the level of congestion of the routers on its
path. There are several ways to represent this phenomenon. It should be
kept in mind that the following descriptions are mathematical models of
the way TCP is \emph{thought} to allocate bandwidth, not of the TCP
algorithm itself. See Massouli\'e and Roberts~\cite{MR}.
\begin{longlist}[(a)]
\item[(a) \textit{Processor-sharing disciplines}.]
A popular, simplified, stochastic model of this situation consists in
considering that the router allocates its processing power to each flow
according to a slight generalization of the allocation policy given by
relation~(\ref{algog}) with a function $f$ depending on the node $j$
and of the form $w_j n$, where $1/w_j$ can be the round trip time
between the source and the destination. This allocation algorithm
corresponds to the \emph{discriminatory processor-sharing policy}. Node
$j$ has an instantaneous fraction of capacity given by
%
\begin{equation}
\label{dps} \frac{w_jn_j}{w_1n_1+w_2n_2+\cdots+w_Jn_J}.
\end{equation}
See Altman et~al. \cite{AAU} and references therein for a survey.
When all the $w_j$'s are~$1$, we obtain the classical processor-sharing
policy: node $j$ receives the fraction of capacity $n_j/(n_1+\cdots
+n_J)$, and the bandwidth is equally divided among the current
requests. Different classes of stochastic models of processor-sharing
policies have been extensively used to describe the congestion in IP
networks. See Bonald et~al. \cite{Bonald2}, Kelly et~al. \cite
{Kelly} and Graham and Robert \cite{Graham} and references therein.
\end{longlist}

\begin{longlist}[(b)]
\item[(b) \textit{Alpha-fair disciplines}.]
These policies have also been introduced to describe the allocation of
bandwidth in IP networks (see Mo and Walrand \cite{Walrand}), in terms
of an optimization problem (cf. Kelly et~al. \cite{Kelly}). In our
context, a related policy would correspond to the case $f(n)=n^\alpha
$, $n\geq0$, so that a nonempty node $j$ has an instantaneous fraction
of capacity given by
%
\begin{equation}
\label{alp} \frac{n_j^\alpha}{n_1^\alpha+n_2^\alpha+\cdots+n_J^\alpha}.
\end{equation}
The case $\alpha=1$ is the processor-sharing discipline presented above.
\end{longlist}
In the wireless section below, the situation is quite different since
the bandwidth allocation algorithm is defined explicitly by relations
similar to~(\ref{algog}).

\subsection{Wireless networks} This is again a simplified, but
meaningful\break stochastic model of bandwidth allocation, this time in
wireless networks. The resource here is a radio channel in a region
where there are $J$ stations/mobiles. At a given time, because of
interferences, only one station can transmit successfully in this
region. A station with $n_j$ messages waiting for transmission can
detect if there is a communication going on or not. If not, a classical
backoff mechanism is used: the station starts transmitting after an
exponentially distributed amount of service with parameter $f(n_j)$. If
another station starts a transmission before that time, the attempt of
transmission is canceled. Consequently, if initially there is no
transmission, node $j$ will be the first to access the channel with probability
${f(n_j)}/{(f(n_1)+\cdots+f(n_J))}$.
See Abramson~\cite{Abramson} and Metcalf and Boggs~\cite{Metcalf} for
historical references, and Tassiulas and Ephremides~\cite{Tass}. If a
small quantum $\delta$ is transmitted at each access, it is not
difficult to see that, provided that backoff times are small (which is
the case if one of the components is large), as $\delta$ goes to $0$
the effective capacity allocated to station $j$ is indeed given by
%
\begin{equation}
\label{eef} \frac{f(n_j)}{f(n_1)+\cdots+f(n_J)}.
\end{equation}
This is the analogue of the approximation of the round robin policy by
the processor-sharing discipline.

\subsubsection*{Fair access to resource: The choice of the function $x\mapsto f(x)$}
The function $f$ should clearly be increasing, so that the fraction of
the capacity allocated may grow with the number of requests. This is
the case if $f(x)=x^\alpha$ which corresponds to the Alpha-fair
disciplines already mentioned. However, these policies may have a
serious drawback. Indeed, if a station $j$ has a large number of
requests pending while the other stations are lightly loaded, the
latter will receive a negligible fraction of the bandwidth. The station
$j$ will therefore capture the channel for its own benefit, until the
instant when some of the other stations reach a \emph{comparable level
of congestion}. This is a highly undesirable property for a network
where fairness issues (for nodes, not requests) are of primary
importance. See Bonald and Massouli\'e~\cite{Bonald}.

A possible way of solving this problem is to consider increasing
functions $f$ which grow slowly to infinity like, for example, the
concave function $x\mapsto\log(1+x)$ or $x\mapsto\log\log(e+x)$.
In this way, one can expect to reduce significantly the impact of
saturated nodes even if they still receive a sizable fraction of the
available capacity. Related algorithms have been considered in the
context of wireless networks; see Shah and Wischik~\cite{Shah} and
references therein. Bouman et~al. \cite{BBLP} and Ghaderi et~al. \cite
{GBW} investigate the impact of the growth of the function~$f$ on the
stability and on the delays for several wireless network architectures
with a related bandwidth allocation scheme.

In this paper, we mainly investigate the case $f(x)=\log(1+x)$. The
general case is sketched in Section~\ref{Genfsec}. The instantaneous
fraction of capacity of the $j$th node is therefore given by
%
\begin{equation}
\label{logps} \frac{\log(1+n_j)}{\log(1+ n_1)+\log(1+n_2)+\cdots+\log(1+n_J)}.
\end{equation}
Note also that since the limit of $x^\alpha/\alpha$ when $\alpha$
goes to $0$ is $\log x$ for $x>0$, this allocation mechanism can be
seen as a limiting case of Alpha-fair disciplines described above.

The $\log$ function moderates the rate at which a saturated station
tries to access the resource, which is a desirable property in an
heterogeneous network where connections may have very different
characteristics. In the context of wireless networks, a related
algorithm was used to show that an optimal stability region is possible
in a quite general network. The growth properties of the $\log$
function play an important role in the proof of the result. Basically,
the $\log$ of the states of the saturated stations being quite stable
on some large time intervals, the schedule (the set of stations that
can transmit at some time) quickly reaches some equilibrium and stays
around it. A Lyapounov function argument can then be used to prove
ergodicity (see Shah and Shin~\cite{Shah2}). Up to now, apart from
these stability results, little is known about the quantitative and
qualitative properties of these algorithms. As we shall see below, the
mathematical analysis of this class of algorithms presents some
challenging and unusual problems (see also Wischik~\cite{Wischik}). We
first achieve a fluid limit scaling analysis, which gives a very
precise description of the qualitative behavior of these algorithms.
Additionally, we derive a heavy traffic limit theorem result for the
invariant distribution of the associated Markov process. Before
presenting our main results, we briefly recall the main definitions of
the fluid limit scaling. The interested reader will find an extended
presentation in Bramson~\cite{Bramson} or in Chapter~8 of Robert~\cite
{Robert}.

\subsection*{Fluid limits}
Throughout the paper, it will be assumed that, for every \mbox{$1\leq j\leq
J$}, the requests arriving at the $j$th node form a Poisson process with
rate $\lambda_j>0$. Each request at node~$j$ leaves the network when
it has received an exponentially distributed amount of service with
parameter $\mu_j$ from the common resource. The average load of the
$j$th node is denoted by $\rho_j=\lambda_j/\mu_j$.

The fluid limit scaling of a stochastic process $(Z(t))$ in $\R^J$
consists in speeding up time and space in proportion to the norm of its
initial state:
\[
\overline{Z}_N(t)=\frac{Z(Nt)}{N}\qquad\mbox{with } N=\bigl\llVert
Z(0)\bigr\rrVert.
\]
A possible limit in distribution of the sequence of processes
$(\overline{Z}_N(t))$ is called a fluid limit of the process $(Z(t))$.
Hence, in some sense fluid limits give a first-order description of
$(Z(t))$. This is a convenient tool to investigate multidimensional
processes for which general results are scarce. In the context of
Markov processes, there is an additional interest since the ergodicity
of the process can be connected to the fact that fluid limits (whose
initial states lie on the unit sphere) return to the origin. See Rybko
and Stolyar~\cite{Rybko} and Dai~\cite{Dai}. Note, however, that the
fluid limit scaling is well suited for processes that behave locally
like random walks. For other processes, different scalings may have to
be considered.

For certain choices of $f$ a fluid limit is obtained by standard
techniques. For instance, in the case
of generalized processor-sharing policy defined by relation~(\ref{dps}), for every $1\leq j\leq J$ and $t\geq0$, let $X_j(t)$ denote
the number of jobs waiting at the $j$th node. The evolution of this
process can be represented as
\[
X_j(t)= X_j(0)+M_j(t)+
\lambda_j t-\mu_j\int_0^t
\frac{w_j
X_j(s)}{w_1X_1(s)+ w_2X_2(s)+\cdots+w_JX_J(s)}\, \diff s,
\]
where $(M_j(t))$ is a martingale. The scaled process with $N=\llVert  X(0)\rrVert  $
is thus given by
%
\begin{eqnarray}
\label{rou} \overline{X}{}^N_j(t)&=&\overline{X}{}^N_j(0)+
\frac{M_j(Nt)}{N}
\nonumber\\[-8pt]\\[-8pt]\nonumber
&&{}+\lambda _j t-\mu_j\int_0^t
\frac{w_j \overline{X}{}^N_j(s)}{w_1\overline
{X}{}^N_1(s)+ w_2\overline{X}{}^N_2(s)+\cdots+w_J\overline{X}{}^N_J(s)}\,\diff s.
\end{eqnarray}
A standard tightness criterion and the fact that the martingale
\[
\bigl(\bigl({M_j(Nt)}/{N}\bigr), 1\leq j\le J\bigr)
\]
converges in distribution to $0$ imply that any fluid limit $((x_j(t)),
1\leq j\leq J)$ should satisfy the ordinary differential equations
%
\begin{equation}
\label{fluidps} \quad\frac{\diff x_j}{\diff t}(t)= \lambda_j-\mu_j
\frac{w_j
x_j(t)}{w_1x_1(t)+ w_2x_2(t)+\cdots+w_Jx_J(t)},\qquad1\leq j\leq J,
\end{equation}
on the interval $[0,t_0]$, provided that the vector $(x_j(t), 1\leq
j\leq J)$ does not hit~$0$ before $t_0$. See Ben Tahar and
Jean-Marie~\cite{BenTahar}, Ramanan and Reiman~\cite{RR} and
references therein.

Similarly, for Alpha-fair disciplines the corresponding fluid model
$(x_j(t))$ is the solution to the ODE
%
\begin{equation}
\label{fluidalpha} \frac{\diff x_j}{\diff t}(t)= \lambda_j-\mu_j
\frac{x_j(t)^\alpha
}{x_1(t)^\alpha+ x_2(t)^\alpha+\cdots+x_J(t)^\alpha}, \qquad1\leq j\leq J.
\end{equation}
Observe that, for these two choices of function $f$, the scaled process
satisfies an autonomous ODE, like~(\ref{fluidalpha}), with a
stochastic noise component that vanishes as $N$ gets large.
Secondly, a remarkable feature of these convergence results is that all
coordinates of the scaled process are of order $N$. That is, as long as
$x(t)$ is not the vector $0$, one has $x_j(t)>0$ \emph{for all} $1\leq
j\leq J$. However, as we shall now discuss, for instances, of interest
where $f$ increases slowly [e.g., $f(x)=\log x$] such standard
techniques are not applicable.

\subsection*{Problem of fluid limits for algorithms with logarithmic weights}
Let\break $(L_j(t), 1\leq j\leq J)$ denote the Markov process associated to
the policy with logarithmic weights, that is, associated to
relation~(\ref{logps}). Due to the $\log$ function, the scaled
process $(\overline{L}_j(t))$ does not have an autonomous
representation analogous to~(\ref{rou}). In fact, it is easy to see
that the corresponding stochastic equations involve both $L_j(Nt)$ and
$\log(1+L_j(Nt))$, two quantities which evolve on very different
scales. For this reason, there is no way of guessing a system of
plausible ``fluid equations'' corresponding to system~(\ref{fluidps})
[resp., to~(\ref{fluidalpha})] for discriminatory processor-sharing
policy (resp., Alpha-fair policies). See Wischik~\cite{Wischik} and
Ghaderi et~al. \cite{GBW}. Note that the question of stability of the
system is not an issue here. Indeed, because of the work conserving
property of these policies, a necessary and sufficient condition for
the ergodicity of $(L_j(t), 1\leq j\leq J)$ is simply given by
\[
\rho_1+\cdots+\rho_J<1\qquad\mbox{with }
\rho_j=\frac{\lambda
_j}{\mu_j}, 1\leq j\leq J.
\]
To the best of our knowledge, there is no explicit expression known for
the invariant distribution.
The fluid scaling gives a first order description of the behavior of
this policy. As we shall see, an interesting convergence result for the
invariant distribution just below saturation can also be derived from
these results.

Additionally, an interesting phenomenon in this domain is presented.
For most of the queueing networks investigated up to now, the classic
general scheme for the fluid scaling of the associated Markov process
$(X(t))$ is as follows: there is a subset of the coordinates whose
values are of the order of $N=\llVert  X(0)\rrVert  $ and the other coordinates form
an ergodic Markov process whose invariant distribution determines the
evolution of the large coordinates on the fluid scale. Here, the
situation is different. In the case of two nodes, under some
appropriate conditions, then the coordinate $(L_2(t))$ is of the order
of $N$ but $(L_1(t))$ is an order of magnitude smaller, $N^{\alpha^*}$
for some $0<{\alpha^*}<1$. There is indeed an underlying ergodic Markov
process but at the second order, namely an Ornstein--Uhlenbeck process
$(Z(t))$ so that $L_1(t)\sim N^{\alpha^*}+\sqrt{N^{{\alpha^*}}\log
N}\cdot
Z(t)$. For this queueing system, the queue lengths of underloaded
queues are not proportional to load but operate on a scale between
$O(1)$ and $O(N)$.

\subsection*{Outline of the paper}
Section~\ref{Pressec} presents the main results of the paper.
Section~\ref{stocmod} introduces the notation and the stochastic
differential equations associated to the Markov process $(L_j(t), 1\leq
j\leq J)$. Sections~\ref{ts1}, \ref{ts2} and~\ref{ts3} are,
respectively, devoted to the scaling properties of the time scales
$t\mapsto N^t$, $t\mapsto N^{\alpha^*}\log N t$ and $t\mapsto N t$. There
we provide a precise description of the evolution of the network,
together with some estimates of hitting times. The key results on the
fluid limits are presented in Section~\ref{ts3}.
In Section~\ref{heavysec}, we prove a heavy traffic limit theorem for
the invariant distribution. The case of a two node network with a
general $f$ is discussed in Section~\ref{loglogsec}. The corresponding
time scales are identified in this case. Section~\ref{Multisec} gives
a brief sketch of the case of a network of $J$ nodes.

\section{Presentation of results}\label{Pressec}

\subsection{The general picture}\label{TGP}
The main result of the paper is that the states of the nodes of a
network with two nodes may live on different space and time scales
depending on the set of arrival and service rates. In the next
paragraph, we give a more precise description of this phenomenon. An
illustration of the different scales is given in Figure~\ref{fig1},
where the $y$-axis is on a $\log\cdot/\log N$ scale.

\begin{figure}

\includegraphics{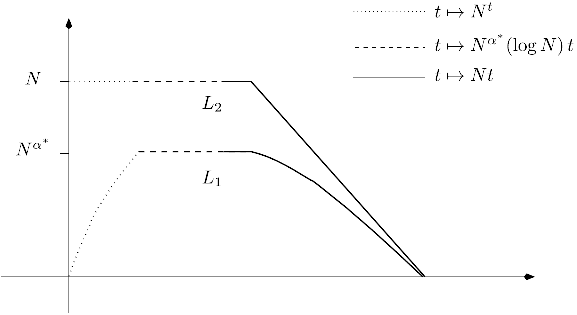}

\caption{A first-order picture of the network with $\rho_1+\rho
_2<1$, $\rho_1<1/2$ and $(L_1(0),L_2(0))=(0,N)$.
The first queue grows according to step \textup{(a)}, then the queue behaves as
an OU process for a while according to step \textup{(b)}, then finally the system
converges to zero according to step \textup{(c)}.}\label{fig1}
\end{figure}

Giving an asymptotic picture of this queueing system with only two
nodes is already a challenging problem. To concentrate on the most
interesting case, from the point of view of mathematical difficulty,
let us assume that the parameters satisfy the conditions:\vspace*{1pt} $\rho_1<1/2$
and $\rho_2>1/2$ and the initial state of the process is
$(L_1^N(0),L_2^N(0))=(0,N)$. The other cases are discussed further in
the paper, in Proposition~\ref{fluid+}, for example.

\subsubsection*{Three time scales}
\begin{longlist}[(a)]
\item[(a) \emph{The time scale} ${t\to N^t}$.]
A convergence result, Proposition~\ref{initprop}, shows the
convergence in distribution, for any $0<s_0<t_0<{\alpha^*}$,
\[
\lim_{N\to+\infty} \bigl(L_1^N
\bigl(N^t\bigr), u<t<v \bigr)= \biggl( \biggl(\lambda_1-
\mu_1\frac{t}{t+1} \biggr) N^t, s_0<t<t_0
\biggr)
\]
with
\[
{\alpha^*}\stackrel{\mathrm{def.}} {=}\frac{\rho_1}{1-\rho_1}.
\]
The process $L_2^N$ stays at $N$ on this time scale. The condition
$\rho_1<1/2$ implies that ${\alpha^*}<1$. Note that the prefactor of
$N^t$ vanishes at $t={\alpha^*}$. For this reason, this convergence
result does not prove that values of the order of $N^{\alpha^*}$ can be
reached. In fact, an extra $(\log N)$ factor is required and
Proposition~\ref{Job} shows that if $\delta<1$, the average hitting
time of the value $\lfloor\delta N^{\alpha^*}\rfloor$ by $(L_1^N(t))$
is bounded above by $K_1 N^{\alpha^*}\log N$ for some $K_1>0$. On the
other hand, reaching the value $\lfloor N^{\alpha^*}\rfloor$ is slightly
longer: the average hitting time of $\lfloor N^{\alpha^*}\rfloor$ is
bounded by $C N^{\alpha^*}(\log N)^2\log\log(N)$ for some $C>0$; see
relation~(\ref{Goisot}) in Section~\ref{ts2}.
\end{longlist}

\begin{longlist}[(b)]
\item[(b) \emph{The time scale} ${t\to N^{\alpha^*}(\log N) t}$.]
We now assume that $L_1^N(0)=\lfloor N^{\alpha^*}\rfloor$ and
$L_2(0)=N$. Theorem~\ref{theostab} proves the following convergence in
distribution
%
\begin{equation}
\label{tgt} \lim_{N\to+\infty} \biggl(\frac{L_1^N (N^{\alpha^*}(\log N)
t
)-N^{\alpha^*}}{\sqrt{N^{{\alpha^*}}\log N}}, t\geq0
\biggr)= \bigl(Z(t)\bigr),
\end{equation}
where $(Z(t))$ is an Ornstein--Uhlenbeck process. In other words, on
this time scale $L_1^N$ is stabilized around the value $N^{\alpha^*}$.
Again, the process $L_2^N$ stays at $N$ on this time scale.
\end{longlist}

\begin{longlist}[(c)]
\item[(c) \emph{The fluid time scale} ${t\to Nt}$.]
Theorem~\ref{theoprov} shows the convergence in distribution,
%
\begin{equation}
\label{sap} \lim_{N\to+\infty} \biggl(\frac{L_1^N(Nt)}{N^{\alpha^*}},
\frac
{L_2^N(Nt)}{N} \biggr)= \bigl(\gamma(t)^{\alpha^*},\gamma(t) \bigr)
\end{equation}
for the convergence in distribution of processes, with
\[
\gamma(t)=\bigl(1+\bigl(\lambda_2-\mu_2(1-
\rho_1)\bigr)t\bigr)^+.
\]
Consequently, as long as the fluid limit of $(L_2(t))$ is not $0$, the
process $L_1^N$ lives on the space scale $N^{\alpha^*}$.
\end{longlist}


\subsection{Properties of resource sharing with logarithmic weights}
The bandwidth allocation with logarithmic weights exhibits some
interesting properties.
For the two node network described above, when the initial state is
$(0,N)$ we prove that the fluid limit is given by
\[
\bigl( \bigl(0, 1+\bigl(\lambda_2-\mu_2(1-
\rho_1)\bigr)t \bigr)^+ \bigr).
\]
This shows that node~2 receives the capacity $1-\rho_1$, which is
another way of saying that node~1 is stable at the fluid level. The
simplicity of this expression somewhat hides the complexity of the
situation, since the quantity ${\alpha^*}$ does not show up. Yet, the
quantity ${\alpha^*}$ has a crucial impact on the equilibrium
distribution and on the transient behavior.

\subsubsection*{Equilibrium: Heavy-traffic regime}
Under the stability condition $\rho_1+\break \rho_2<1$, let $(L_{1,\rho
},L_{2,\rho})$ denote random variables with the equilibrium
distribution of the Markov process $(L_1(t),L_2(t))$. If $\rho_1<1/2$
is fixed, the heavy traffic-limit Theorem~\ref{heavyth} shows the
convergence in distribution
\[
\lim_{\rho_2\nearrow1-\rho_1} \bigl((1-\rho_1-\rho_2)^{{\alpha^*}
}L_{1,\rho},
(1-\rho_1-\rho_2)L_{2,\rho} \bigr)=
\bigl(X^{\alpha^*},X\bigr),
\]
where $X$ is an exponential random variable. Hence, at equilibrium and
in the heavy traffic regime, the relation $L_{1}\sim L_{2}^{\alpha^*}$
also holds as in relation~(\ref{sap}) for fluid limits. Note that
$X^{\alpha^*}$ has a Weibull distribution.

\subsubsection*{Transient case}
If the system is overloaded ($\rho_1+\rho_2>1$) and if $\rho_1<1/2$,
the size of the queue of class~1 requests grows at rate proportional to
$t^{\alpha^*}$, with ${\alpha^*}<1$. This implies that queue~1 is
stable at
the fluid level, that is, that $L_1(t)/t$ goes to $0$ in distribution
as $t$ becomes large. Hence, without any priority mechanism among
nodes, if a node has a light load, $\rho_1<1/2$, then most of its
messages will be transmitted with success even in the case where the
system is globally saturated. Recall that in the transient case of the
processor-sharing policy or even with the Alpha-fair disciplines, this
is not true at all: the states of the nodes diverge to infinity at the
same speed, linearly in time.

This is an interesting feature from the point of view of fairness
issues among nodes. Indeed, if the node is not too aggressive, $\rho
_1<1/2$, this result implies that it will be able to transmit most of
its traffic \emph{independently} of the load of the other node. It can
be shown that an analogous property is valid for the network with $J$
nodes; see Section~\ref{Multisec}.

It is unlikely that a standard fluid analysis, that is, deriving
directly some equations similar to relation~(\ref{fluidps}), for
example, can be done to investigate the qualitative behavior of more
complex networks. This is where the consideration of the various time
scales is useful. It gives a tool to explain, via a dynamic picture,
the multiple orders of magnitude of the state variables at equilibrium.

\subsection{An interaction of time scales}
There is an unconventional property for the fluid scaling of a queueing
system. For most of the queueing networks investigated up to now, the
classic general scheme for the fluid scaling of the associated Markov
process $(X(t))$ is as follows: there is a subset of the coordinates
whose values are of the order of $N=\llVert  X(0)\rrVert  $ and the other coordinates
form an ergodic Markov process whose invariant distribution determines
the evolution of the large coordinates on the fluid scale. See
Malyshev~\cite{Malyshev} and Bramson~\cite{Bramson} for some examples.

Here the situation is different. If the initial state is $(0,N)$ and if
$\rho_1<1/2$, then the large coordinate $L_2$ is of the order of $N$
but $L_1$ is an order of magnitude smaller, $N^{\alpha^*}$ with
$0<{\alpha^*}<1$. There is indeed an underlying ergodic Markov process
but at the second order, namely an Ornstein--Uhlenbeck process scaled
by a factor $\sqrt{N^{{\alpha^*}}\log N}$. See relation~(\ref{tgt}).

The associated stochastic model exhibits a \emph{stochastic averaging
principle} at the origin of the second expansion in relation~(\ref
{sap}). The key technical result of the paper, Theorem~\ref{lem1},
states that when $\rho_1<1/2$, on \emph{the fluid time scale},
$t\mapsto Nt$, $L_1^N$ is \emph{uniformly} of the order of
$(L_2^N)^{\alpha^*}$ on any finite time interval with high probability.
Recall that:
\begin{longlist}[(a)]
\item[(a)] If $L_2^N(0)=N$ and $L_1^N(0)=N^{\alpha^*}$, then on the time scale
$t\mapsto N^{\alpha^*}(\log N) t$ we have $L_2^N\sim L_2^N(0)$ and
$L_1^N$ is of the order of $(L_2^N(0))^{\alpha^*}$. Additionally, $L_1^N$
can be represented by an Ornstein--Uhlenbeck process around $N^{\alpha^*}
$; see previous point, point (b) in Section~\ref{TGP}.
\item[(b)] On the fluid time scale, $L_2^N(Nt)$ is of the order of $\gamma
(t)L_2^N(0)$ with $\gamma(t)$ defined above by relation~(\ref{sap}).
\end{longlist}
The problem lies in proving that on the fluid time scale $L_1^N$ adapts
sufficiently quickly to preserve the relation $L_1^N\sim
(L_2^N)^{\alpha^*}$. A central limit result, Proposition~\ref{tclgen},
suggests that this is not the case on the timescale of the
Ornstein--Uhlenbeck process, at least for a second-order description.
On the other hand, on the fluid time scale Theorem~\ref{lem1} shows
that this separation of time scales holds. Its proof uses several
estimates related to average hitting times of reflected random walks
and some coupling arguments. One of the problems encountered is that\vspace*{1pt}
the potential natural stochastic fluctuations of the fluid time scale,
of the order of $\sqrt{N}$, can be large compared to $N^{\alpha^*}$ (if
${\alpha^*}<1/2$, e.g.). In particular, standard stochastic
calculus cannot be used as such to prove the result. It turns out that
the potentially large fluctuations are reduced by the strong ergodicity
properties of the underlying Ornstein--Uhlenbeck process. Thus, it does
not seem that the classical techniques for proving stochastic averaging
results can be used here. See Has'minski{\u\i}~\cite{Khasminski01},
Freidlin and Wentzell~\cite{Freidlin02} and Papanicolau et~al. \cite
{PSV} for a general presentation of methods to prove stochastic
averaging principles.

\section{The stochastic model}\label{stocmod}
In this section, we introduce the main stochastic processes and some notation.
If $h$ is a nonnegative Borelian function on $\R_+$, we let
${\mathcal N}_h$ denote a Poisson process with rate $x\mapsto h(x)$ on
$\R_+$. This process can be defined as follows. If ${\mathcal P}$ is
a homogeneous Poisson point process on $\R_+^2$ with rate~$1$ and $f$
is some nonnegative Borelian function on $\R_+$, then ${\mathcal
N}_h(f)$ is defined by
\[
\int f(u){\mathcal N}_{h(u)}(\diff u)= \int_{\R_+^2}f(u){
\mathcal P}\bigl(\bigl[0,h(u)\bigr]\times\diff u\bigr).
\]
For $\xi\geq0$, ${\mathcal N}_\xi$ denotes the Poisson process with
rate $\xi$ on $\R_+$, that is, corresponding to the constant function
equal to $\xi$. In addition, for any $0\leq a\leq b$, ${\mathcal
N}_\xi([a,b])$ stands for the number of points of ${\mathcal N}_\xi$
in the interval $[a,b]$. Throughout the paper, the various Poisson
processes used will be assumed independent.

We consider two classes of customers. The arrival process of class $j$
customers is a Poisson process with rate $\lambda_j$, the distribution
of the duration of the required service is exponential with rate $\mu
_j$, and $\rho_j$ denotes the ratio $\lambda_j/\mu_j$. Each class of
customers has a dedicated queue and there is a single server working at
unit speed. If the state of the system is $(x_1,x_2)\in\N^2$, where
$x_j$ is the number of jobs in queue~$j$, then customers of class $j$
receive the fraction of service
%
\begin{equation}
\label{W} W_i(x_1,x_2)\stackrel{
\mathrm{def.}} {=}\frac{\log(1+x_i)}{\log
(1+x_1)+\log(1+x_2)}, \qquad i=1, 2,
\end{equation}
from the server, with the convention that $0/0$ is $0$. The process of
the number of jobs in queue $j\in\{1,2\}$ is denoted by $(L_j(t))$.
Since we are only interested in the total number of customers of each
class, there is no need to specify the service discipline for each
queue. It can be Processor-Sharing or FIFO (First In First Out), for example.

\subsection*{Stochastic differential equation}
The stochastic process $(L_1(t),L_2(t))$ can be expressed as the
solution to the following stochastic differential equation (SDE):
%
\begin{equation}
\diff L_i(t)={\mathcal N}_{\lambda_i}(\diff t) -{\mathcal
N}_{\mu_i
W_i(L_1(t-),L_2(t-))}(\diff t),\qquad i=1, 2, \label{SDE}
\end{equation}
where $L_i(t-)$ denotes the left limit of $L_i$ at $t$ and $W_i$ is the
function defined by relation~(\ref{W}).

\subsection*{A saturated system}
For $N\in\N$, $\lambda$, $\mu>0$, it will be convenient to
introduce a one-dimensional process $(X_N(t))$ describing the evolution
of the number of customers in a given queue when the number of jobs in
the other queue is ``large,'' that is, of the order of $N$. The process
$(X_N(t))$ is thus defined as the solution to the~SDE
%
\begin{equation}
\label{eqX} \diff X_N(t)={\mathcal N}_\lambda(\diff t) -{
\mathcal N}_{\mu
W_1(x,N-1)}(\diff t).
\end{equation}
From a Markov process point of view, $(X_N(t))$ is simply a \emph{birth
and death process} on $\N$ whose $Q$-matrix $(q(x,y))$ is defined by
\[
\cases{ q(x,x+1)=\lambda,
\cr
\displaystyle q(x,x-1) =\mu \frac{\log(1+x)}{\log(1+x)+\log
N},
\qquad x>0.}
\]
As we shall see, when $(L_1(0),L_2(0))= (0,N-1)$, $X_N(0)=0$, $\lambda
=\lambda_1$ and \mbox{$\mu=\mu_1$}, the two processes $(L_1(t))$ and
$(X_N(t))$ are close enough (for our purposes). Note that this is not
completely clear since the process $(L_2(t))$ may drift away from $N$
and therefore change the service rate received by each class. It turns
out that, because of the slow increase of the $\log$ function, this
property will hold at least at the beginning of the sample paths.

By integrating the SDE~(\ref{eqX}) one obtains that, for any $t\geq0$,
\begin{eqnarray}\label{eqXint}
X_N(t)&=&X_N(0)+{\mathcal N}_\lambda\bigl([0,t]
\bigr)-\int_0^t {\mathcal N}_{\mu W_1(X_N(u-),N-1)}(
\diff u)
\nonumber\\[-8pt]\\[-8pt]\nonumber
&=&X_N(0)+\lambda t-\mu\int_0^t
\frac{\log(1+X_N(u))}{\log
(1+X_N(u))+\log N}\,\diff u +M_N(t),
\nonumber
\end{eqnarray}
where $(M_N(t))$ is the martingale
\[
M_N(t)={\mathcal N}_\lambda\bigl([0,t]\bigr)-\lambda t +\int
_0^t \bigl[{\mathcal N}_{\mu W_1(X_N(u-),N-1)}(\diff
u)-\mu W_1\bigl(X_N(u),N-1\bigr)\,\diff u \bigr],
\]
whose increasing process is given by
%
\begin{equation}
\label{croc} \langle M_N \rangle(t)=\lambda t+ \mu\int
_0^t \frac
{\log(1+X_N(u))}{\log(1+X_N(u))+\log N}\,\diff u.
\end{equation}

\section{The initial phase}\label{ts1}
This section is devoted to the very beginning of the evolution of the
first component $(L_1^N(t))$, when it starts from $0$ while $L_2^N(0)=N$.
To start with, we have the following asymptotic result on the initial
growth rate of the process $(X_N(t))$ defined by equation~(\ref{eqX}).
Here and later, we write $a\wedge b$ for the quantity $\min(a,b)$.

\begin{proposition}\label{asympX}
If $X_N(0)=0$ and
\[
{\alpha^*}\stackrel{\mathit{def.}} {=}\frac{\rho}{1-\rho}\qquad \mbox {where }
\rho=\frac{\lambda}{\mu},
\]
then, for any $0<s_0<t_0< {\alpha^*}\wedge1$, the convergence in
distribution of stochastic processes
\[
\lim_{N\to+\infty} \biggl(\frac{X_N(N^t)}{N^t}, s_0\leq t
\leq t_0 \biggr)= \biggl(\lambda- \mu\frac{t}{t+1},
s_0\leq t\leq t_0 \biggr)
\]
holds.
\end{proposition}

See Chapters~2 and~3 of Billingsley~\cite{Billingsley} on the
convergence in distribution of a sequence of processes to a continuous
stochastic processes.
\begin{pf*}{Proof of Proposition \ref{asympX}}
The evolution equation~(\ref{eqXint}) and a change of variables give
us that for every $t\geq0$,
\begin{eqnarray*}
\frac{X_N(N^t)}{N^t}&=&\frac{X_N(1)-\lambda-M_N(1)}{N^t}+ \frac
{M_N(N^t)}{N^t}
\\
&&{}+\lambda-\mu\int_0^t \frac{\log(1+X_N(N^u))}{\log
(1+X_N(N^u))+\log N} (\log
N) N^{u-t}\,\diff u.
\end{eqnarray*}
Letting $Z_N(t)\stackrel{\mathrm{def.}}{=}(1+X_N(N^t))/N^t$, we thus have
%
\begin{eqnarray}\label{eqZ1}
\qquad Z_N(t)&=&\frac{X_N(1)+1-\lambda-M_N(1)}{N^t}+\frac{M_N(N^t)}{N^t}
\nonumber\\[-8pt]\\[-8pt]\nonumber
&&{}+\lambda-\mu\int_0^t \frac{\log(Z_N(t-v))+(t-v)\log N}{\log
(Z_N(t-v))+(t-v+1)\log N} (\log
N)N^{-v} \,\diff v.
\end{eqnarray}
Let us first show that the martingale term does not play a role for
this scaling. Indeed, for $0<a<b<1$, Doob's inequality yields, for every
$\varepsilon>0$,
\begin{eqnarray*}
\P \biggl(\sup_{a\leq s\leq b} \frac{\llvert  M_N(N^s)\rrvert  }{N^s}\geq \varepsilon
\biggr) &\leq&\P \biggl(\sup_{N^a\leq x\leq N^b} \frac
{\llvert  M_N(x)\rrvert  }{N^a}\geq
\varepsilon \biggr)
\\
&\leq&\frac{1}{\varepsilon
^2N^{2a}}\E\bigl(\bigl\llvert M_N\bigl(N^b
\bigr)\bigr\rrvert ^2\bigr)=\frac{1}{\varepsilon^2N^{2a}}\E\bigl( \langle
M_N \rangle\bigl(N^b\bigr)\bigr)
\\
&\leq& \frac{(\lambda+\mu)}{\varepsilon^2}N^{b-2a}.
\end{eqnarray*}

The last term can be made arbitrarily small when $N$ is large by
choosing $b<2a$. Since any interval $[s_0,t_0]\subset(0,{\alpha^*})$ can
be covered by a finite number of such intervals, the martingale term is
indeed negligible with probability tending to 1 as~$N$ goes to
infinity. The relation $X_N(1)\leq{\mathcal N}_\lambda([0,1])$
implies that the first term in the right-hand side of (\ref{eqZ1}) vanishes too
when divided by $N^t$.

Next, the inequality $X_N(s)\leq{\mathcal N}_\lambda([0,s])$ gives
us that
\begin{eqnarray*}
&& \int_0^t \frac{\log(Z_N(t-v))+(t-v)\log N}{\log
(Z_N(t-v))+(t-v+1)\log N} (\log
N)N^{-v} \,\diff v
\\
&&\qquad \leq Y_N(t)
\\
&&\hspace*{-2pt}\qquad \stackrel{\mathrm{def.}} {=} \int_0^t
\frac{\log
((1+{\mathcal N}_\lambda([0,N^{t-v}]))/N^{t-v})+(t-v)\log
N}{\log((1+{\mathcal N}_\lambda
([0,N^{t-v}]))/N^{t-v})+(t-v+1)\log N} (\log N)N^{-v} \,\diff v.
\end{eqnarray*}
For $0\leq v\leq t\leq b$,
\begin{eqnarray*}
&& \biggl\llvert \frac{\log((1+{\mathcal N}_\lambda
([0,N^{v}]))/N^{v})+v\log N}{\log((1+{\mathcal N}_\lambda
([0,N^{v}]))/N^{v})+(v+1)\log N}- \frac{v}{1+v}\biggr\rrvert
\\
&&\qquad\leq\frac{\log(1+S_N(b))}{\log N}\frac{1}{\log(1+S_N(b))/\log(N)+1},
\end{eqnarray*}
with
\[
S_N(b)=\sup_{0\leq v\leq b} \frac{{\mathcal N}_\lambda([0,N^{v}])}{N^{v}}.
\]
By the law of large numbers for Poisson processes, for any
$0<\varepsilon<b$, the process $({\mathcal N}_\lambda
([0,N^{s}])/N^{s},  \varepsilon\leq s\leq b)$ converges in
distribution to $(\lambda,  \varepsilon\leq s\leq b)$.
The sequence of random variables $(S_N(b))$ is therefore tight. One
gets that
\[
\sup_{a\leq t\leq b} \biggl\llvert Y_N(t)-\int
_0^t \frac{t-v}{1+t-v}(\log N)N^{-v}
\,\diff v \biggr\rrvert
\]
converges to $0$ in distribution. It is not difficult to check that the
convergence
%
\begin{equation}
\label{meur} \lim_{N\to+\infty} \int_0^t
\frac{t-v}{1+t-v}(\log N)N^{-v} \,\diff v= \frac{t}{t+1},
\end{equation}
occurs uniformly for $a\leq t \leq b$,
one gets therefore the convergence in distribution
\[
\lim_{N\to+\infty} \bigl(Y_N(t), a\leq t\leq b\bigr)=
\biggl(\frac{t}{t+1}, a\leq s\leq b \biggr).
\]
Gathering these estimates, we obtain that for any $0 <a<b<{\alpha
^*}\wedge
1$ and any $\varepsilon>0$,
\[
\lim_{N\to+\infty}\P \biggl(\lambda-\mu\frac{b}{b+1} -\varepsilon
\leq\inf_{a\leq s\leq b} Z_N(s)\leq\sup_{a\leq s\leq
b}
Z_N(s) \leq\lambda+\varepsilon \biggr)=1,
\]
note that $\lambda-\mu b/(b+1)>0$ for $b<{\alpha^*}$. Therefore, for any
$0<a_0<a\leq b\leq b_0<{\alpha^*}$, on some event ${\mathcal E}$ with
an arbitrarily high probability, the process $(\llvert  \log(Z_N(s))\rrvert, a_0\leq
s\leq b_0)$ can be bounded by some constant. Consequently, on this
event, with a similar uniform convergence argument for~(\ref{meur}),
one gets that the sequence of processes
\[
\biggl(\int_0^{t-a_0} \frac{\log(Z_N(t-v))+(t-v)\log N}{\log
(Z_N(t-v))+(t-v+1)\log N} (\log
N)N^{-v} \,\diff v, a_0\leq t\leq b_0 \biggr)
\]
converges in distribution to $({t}/{(t+1)}, a_0\leq s\leq b_0)$.
The remaining term contributing in the integral of the right-hand side
of relation~(\ref{eqZ1}) is
\begin{eqnarray*}
&& \left| \int_{t-a_0}^t
\frac{\log(Z_N(t-v))+(t-v)\log N}{\log
(Z_N(t-v))+(t-v+1)\log N} (\log N)N^{-v} \,\diff v\right|
\\
&&\qquad \leq \int
_{t-a_0}^t (\log N)N^{-v} \,\diff v
\end{eqnarray*}
and thus converges to $(0)$ as a process on $[a,b]$. The desired
convergence in distribution has thus been proved.
\end{pf*}
The following proposition shows that, if $(L_1^N(0),L_2^N(0))=(0,N)$
and $X^N(0)=0$, then the two processes $(L_1^N(t))$ and $(X^N(t))$
are close on the time scale $t\mapsto N^t$, $0<t<{\alpha^*}$. As a
consequence, it implies that the convergence result of Proposition~\ref
{asympX} is also valid for the process $(L_1^N(t))$.

\begin{proposition}\label{initprop}
If $(L^N_1(t),L^N_2(t))$ is the solution to the SDE~(\ref{SDE}) with
initial condition $(0,N)$ and ${\alpha^*}={\rho_1}/{(1-\rho_1)}$ with
$\rho_1={\lambda_1}/{\mu_1}$, then the convergence
\[
\lim_{N\to+\infty} \biggl(\frac{L^N_1(N^t)}{N^t}, 0<t<{\alpha^*} \wedge1
\biggr)= \biggl(\lambda_1- \mu_1\frac{t}{t+1},
0<t<{\alpha^*} \wedge1 \biggr)
\]
holds for the uniform topology on compact sets of $(0,{\alpha^*}\wedge1)$.
\end{proposition}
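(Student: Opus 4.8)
The plan is to sandwich $(L_1^N(t))$ between two saturated birth-and-death processes of the type analysed in Proposition~\ref{asympX}, exploiting the fact that on the time scale $t\mapsto N^t$ with $t<\alpetu\wedge 1<1$ the second coordinate $(L_2^N(t))$ barely moves relative to $N$, so that the service weight seen by the first queue is essentially the one obtained by freezing the second queue at $N$. The first step is to control $(L_2^N(t))$: since class-2 arrivals form a rate-$\lambda_2$ Poisson process and class-2 departures occur at rate $\mu_2W_2\le\mu_2$, one has the pathwise bounds $N-{\cal N}_{\mu_2}([0,s])\le L_2^N(s)\le N+{\cal N}_{\lambda_2}([0,s])$, valid irrespective of the behaviour of $L_1^N$ (so there is no circularity). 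Fixing a compact $[a,b]\subset(0,\alpetu\wedge 1)$, the horizon $N^b$ satisfies $N^b=o(N)$, and the law of large numbers for Poisson processes shows that on an event $G_N$ with $\P(G_N)\to 1$ we have $N^-\le L_2^N(s)\le N^+$ for all $s\le N^b$, where $N^\pm=N\pm\lceil CN^b\rceil=N(1+o(1))$ for a suitable constant $C$; in particular $\log(1+N^\pm)/\log N\to 1$.

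Next I would introduce comparison processes $X_N^+$ and $X_N^-$, defined exactly as $X_N$ in~\eqref{eqX} but with the second coordinate frozen at $N^+$ and $N^-$ respectively, all started at $0$. Because $W_1(x,\cdot)$ is decreasing, the death rate of queue~1 at state $x$ satisfies $\mu_1 W_1(x,N^+)\le \mu_1 W_1(x,L_2^N(s))\le \mu_1 W_1(x,N^-)$ whenever $N^-\le L_2^N(s)\le N^+$. Driving all three processes with the common arrival process ${\cal N}_{\lambda_1}$ and the common Poisson point process ${\cal P}$ of Section~\ref{stocmod} for the departures, the usual monotonicity argument for birth-and-death processes preserves the ordering $X_N^-(s)\le L_1^N(s)\le X_N^+(s)$ up to the first exit of $L_2^N$ from $[N^-,N^+]$; on $G_N$ this holds for all $s\le N^b$. (At an equal state $x>0$ the inclusion of death-intensity intervals forces the correct coupled jump, and since all jumps have size one the ordering cannot be broken when the states differ.)

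Finally, since $N^\pm=N(1+o(1))$, rerunning the proof of Proposition~\ref{asympX} with $\log N$ replaced by $\log(1+N^\pm)=\log N\,(1+o(1))$ — the constant enters only through the ratio $\log(1+N^\pm)/\log N\to 1$ in the drift integral, and the domination $X_N^\pm(s)\le{\cal N}_{\lambda_1}([0,s])$ still holds — yields
\[
\lim_{N\to+\infty}\Big(\frac{X_N^\pm(N^t)}{N^t},\ a\le t\le b\Big)=\Big(\lambda_1-\mu_1\frac{t}{t+1},\ a\le t\le b\Big)
\]
for the uniform topology. Writing $\phi(t)=\lambda_1-\mu_1 t/(t+1)$, on $G_N$ the sandwich gives $\sup_{a\le t\le b}|L_1^N(N^t)/N^t-\phi(t)|\le\max_{\pm}\sup_{a\le t\le b}|X_N^\pm(N^t)/N^t-\phi(t)|$, and the right-hand side tends to $0$ in probability. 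Since $\P(G_N)\to 1$, this establishes the convergence on $[a,b]$, hence on all compact subsets of $(0,\alpetu\wedge 1)$.

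The main obstacle is the comparison step: one must check that the monotone coupling remains legitimate although $L_2^N$ genuinely evolves rather than being frozen, and that the perturbation $N\to N^\pm$ leaves the limit unchanged. Both points rest on the slow growth of $\log$: because $b<1$ confines $L_2^N$ to $N(1+o(1))$, the logarithmic weight is insensitive to the $o(N)$ fluctuations of the second queue, which is exactly what decouples the initial dynamics of the first queue from the second and lets Proposition~\ref{asympX} transfer to $(L_1^N(t))$.
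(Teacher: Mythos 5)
Your proposal is correct and follows essentially the same route as the paper's proof: control $L_2^N$ on the horizon $N^b$, $b<1$ (the paper traps it in $[\eta N,\gamma N]$ via the same rate bounds, you in the slightly tighter window $N(1\pm o(1))$, which is immaterial since only $\log(\cdot)/\log N\to 1$ matters), then sandwich $L_1^N$ between two frozen-coordinate birth-and-death processes by the monotone coupling and transfer the convergence of Proposition~\ref{asympX} to the bounding processes. The one point the paper leaves implicit and you rightly make explicit is that the Proposition~\ref{asympX} argument must be rerun with $\log N$ replaced by $\log(1+N^{\pm})=\log N\,(1+o(1))$, which changes nothing in the limit.
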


\begin{pf}
The idea of the proof is quite simple. First one shows that, on the
time scale $t\to N^t$ with $t<1$, the second component do not change
much so that its log is equivalent to $\log N$. On this time scale the
first coordinate grows linearly as long as $t<{\alpha^*}$ and for this
reason its log is equivalent to $t\log N$, so the capacity it receives
is $t/(t+1)$ which explains the result.

Since $L_2^N(0)=N$ and the number of jobs of class $2$ decreases at
rate at most $\mu_1$ and increases at rate $\lambda_1$, for any
$0<b<1$, there exist two constants $0<\eta<\gamma$ such that if
\[
{\mathcal A}_N\stackrel{\mathrm{def.}} {=} \Bigl\{\eta N\leq\inf
_{0\leq s\leq N^b}L_2^N(s)\leq\sup
_{0\leq s\leq N^b}L_2^N(s)\leq \gamma N \Bigr\},
\]
then $\P({\mathcal A}_N)$ tends to $1$ as $N$ tends to infinity. On
the set ${\mathcal A}_N$, the jump rate for departures of $L_1^N$ lies
between $\mu_1W_1^\eta(\cdot,N)$ and $\mu_1W_1^\gamma(\cdot,N)$, where
\[
W_1^\delta(x,N)=\frac{\log(1+x)}{\log(1+x)+\log(\delta N)}.
\]
Now if $(X_N^\delta(t))$ denotes the solution to equation~(\ref{eqX})
with $W_1$ replaced by $W_1^\delta$, a~straightforward coupling shows
that on the set ${\mathcal A}_N$ the relation
\[
X_N^\eta(s)\leq L_1^N(s)\leq
X_N^\gamma(s), \qquad0\leq s<N^b
\]
holds almost surely. A glance at the proof of the convergence result of
Proposition~\ref{asympX} shows that this result also holds for both
processes $(X_N^\eta(s))$ and $(X_N^\gamma(s))$, and so the
proposition is proved.
\end{pf}
The above proposition shows that if ${\alpha^*}<1$ (i.e., $\rho_1<1/2$),
then on the time scale $t\mapsto N^t$, $0<t<{\alpha^*}$, we have
\[
L_1^N\bigl(N^t\bigr)\sim \biggl(
\lambda_1 -\mu_1\frac{t}{t+1} \biggr)N^t.
\]
In particular, the process $L_1^N$ reaches the quantity $N^{{\alpha^*}
-\varepsilon}$ for any $0<\varepsilon<{\alpha^*}$. Note that, when
$t{\nearrow}{\alpha^*}$, the quantity multiplying $N^t$ vanishes, so\vspace*{1pt}
that this convergence result does not show that the value $N^{\alpha^*}$
is indeed reached. In Sections~\ref{ts2} and~\ref{ts3}, we shall
prove that the process $L_1^N$ lives in fact in a ``small''
neighborhood of~$N^{\alpha^*}$. This local equilibrium around
$N^{\alpha^*}
$ is the key phenomenon to grasp in order to understand this bandwidth
sharing policy. For now, we conclude this section by proving that for
any $0<\delta<1$, the value $\delta N^{{\alpha^*}\wedge1}$ is reached.
This is done by providing an estimation of the corresponding hitting time.

\begin{proposition}\label{Job}
With the same notation and assumptions as in Proposition~\ref
{initprop}, and if $H_a$ denotes the hitting time of $a>0$ by $L_1^N$,
\[
H_a=\inf\bigl\{t>0\dvtx  L_1^N(t)\geq a\bigr\},
\]
then:
\begin{longlist}[(a)]
\item[(a)] if ${\alpha^*}<1$, for any $0<\delta<1$ there exists a constant
$C>0$ such that for any $N\geq1$,
\[
\E (H_{\delta N^{{\alpha^*}}} )\leq\frac{C \delta}{\log
(1/\delta)} N^{{\alpha^*}}\log N,
\]
\item[(b)] if ${\alpha^*}>1$, for $\delta>0$ sufficiently small we have
\[
\limsup_{N\to+\infty} \frac{1}{N} \E (H_{\delta N} )\leq
\frac{\delta}{\lambda_1-\mu_1/2}.
\]
\end{longlist}
\end{proposition}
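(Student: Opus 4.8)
The plan is to treat both bounds through the drift of the class-1 queue, via the optional stopping theorem applied to the martingale coming from the integrated dynamics. Since $L_1^N$ is not Markovian on its own, the first step is to confine $L_2^N$ to a neighbourhood of $N$: I set $\tau=\inf\{t>0: L_2^N(t)\notin[\eta N,\gamma N]\}$ for suitable constants $0<\eta<1<\gamma$, exactly as in the set ${\cal A}_N$ of the proof of Proposition~\ref{initprop}. On the interval $[0,H_a\wedge\tau]$ the service fraction $W_1(L_1^N(s),L_2^N(s))=\log(1+L_1^N)/(\log(1+L_1^N)+\log(1+L_2^N))$ is monotone in each variable, hence maximal when $L_1^N=a$ and $L_2^N=\eta N$; consequently the drift $\lambda_1-\mu_1 W_1$ is bounded below by a deterministic quantity $d_{\min}(a,N)$ throughout that interval.

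From~\eqref{SDE} the process $L_1^N(t)-\int_0^t(\lambda_1-\mu_1 W_1(L_1^N(s),L_2^N(s)))\,\diff s$ is a martingale. Stopping it at $H_a\wedge\tau$ and taking expectations, the drift lower bound together with $L_1^N(H_a\wedge\tau)\le a+1$ gives $d_{\min}(a,N)\,\E(H_a\wedge\tau)\le a+1$. It then remains to evaluate $d_{\min}$ in the two regimes. For case a) with $a=\lfloor\delta N^\alpetu\rfloor$, writing $g=\log(1+a)/(\log(1+a)+\log(1+\eta N))$ and using $\log(1+a)\sim\alpetu\log N$, the quantity $\rho_1-g$, seen as a function of $s=\log(1+a)/\log N$, vanishes to first order at $s=\alpetu$; a Taylor expansion at $s=\alpetu-\log(1/\delta)/\log N$ then yields $\lambda_1-\mu_1 g\sim\mu_1(1-\rho_1)^2\log(1/\delta)/\log N$, whence $\E(H_a\wedge\tau)\le C\,\delta N^\alpetu\log N/\log(1/\delta)$. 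For case b) with $a=\lfloor\delta N\rfloor$ one has $\log(1+a)\sim\log N$, so $g\to1/2$ and $d_{\min}\to\lambda_1-\mu_1/2>0$ (this is where $\alpetu>1$ is used); here the drift is bounded away from $0$, and dividing by $N$, letting $N\to\infty$ and then $\delta\to0$ to absorb the $o(1)$ corrections coming from $\eta,\gamma$ and the unit overshoot gives the $\limsup$ bound $\delta/(\lambda_1-\mu_1/2)$.

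An equivalent, more computational route for case a) uses the exact birth-and-death formula for the saturated process of~\eqref{eqX}: with $\pi_k=\prod_{i=1}^k(\lambda_1/\mu_{1,i})$ and $\mu_{1,i}=\mu_1\log(1+i)/(\log(1+i)+\log N)$, one has $\E_0(H_a)=\sum_{k<a}(\lambda_1\pi_k)^{-1}\sum_{i\le k}\pi_i$, and the monotonicity of $i\mapsto\mu_{1,i}$ gives $\pi_{i-1}/\pi_i\le\mu_{1,k}/\lambda_1$ for $i\le k$, hence $b_k\le(\lambda_1-\mu_{1,k})^{-1}$. Summing and converting the sum to an integral through $s=\log(1+k)/\log N$, the mass concentrates near $s=\alpetu$, and after the substitution $w=(\alpetu-s)\log N$ the integral reduces to $\frac{N^\alpetu\log N}{\mu_1(1-\rho_1)^2}\int_{\log(1/\delta)}^{\infty}w^{-1}e^{-w}\,\diff w$; the exponential-integral asymptotic $\int_z^\infty w^{-1}e^{-w}\,\diff w\sim z^{-1}e^{-z}$ reproduces the factor $\delta/\log(1/\delta)$ and identifies the constant $C$.

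The genuine obstacle, common to both parts, is passing from the bound on $\E(H_a\wedge\tau)$ to one on $\E(H_a)$, that is, showing that the coupling-breaking event $\{\tau<H_a\}$ contributes negligibly. In case a) the hitting time is $O(N^\alpetu\log N)=o(N)$, so $\{\tau<H_a\}$ would require $\Theta(N)$ class-2 transitions in time $o(N)$, an event whose probability decays faster than any power of $N$ by standard Poisson deviation estimates; the residual term $\E(H_a;\tau<H_a)$ is then controlled by a crude almost-sure bound or a regeneration argument and absorbed into $C$. In case b) this step is more delicate, since $H_{\delta N}$ is itself of order $N$: one must take $\delta$ small enough that the number of class-2 jumps before $H_{\delta N}$ stays below $(1-\eta)N$ with high probability, which is precisely the role of the hypothesis that $\delta$ be sufficiently small. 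I expect this control of the excursions of $L_2^N$, rather than the drift estimate itself, to be where the real work lies.
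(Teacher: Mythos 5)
Your main argument is essentially the paper's proof: after confining $L_2^N$ near $N$ (the paper does this by coupling $L_1^N$ with the saturated one-dimensional process of~\eqref{eqX} rather than by stopping the two-dimensional process, a cosmetic difference), it applies Doob's optional stopping theorem to the integrated dynamics with the drift bounded below by its value at the target level, which via the identity~\eqref{eqelem} yields a drift of order $\mu_1\log(1/\delta)/((1+\alpetu)^2\log N)=\mu_1(1-\rho_1)^2\log(1/\delta)/\log N$ in case a) and $\lambda_1-\mu_1/2$ in case b), matching your constants exactly. Your supplementary birth-and-death hitting-time computation and your explicit discussion of the event $\{\tau<H_a\}$ go beyond the paper, which treats the transfer back to $(L_1^N(t))$ with the same brief coupling remark you anticipate (in particular invoking ``$\delta$ sufficiently small'' in case b) precisely so that $L_2^N$ stays of order $N$ up to time $\delta N/(\lambda_1-\mu_1/2)$).
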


\begin{pf}
As we did for Proposition~\ref{initprop}, let us prove these two
inequalities for the process $(X_N(t))$. We use the simplified notation
of Proposition~\ref{asympX}.

Let us first assume that ${\alpha^*}<1$. For $x>0$, the elementary relation
%
\begin{eqnarray}\label{eqelem}
&&\frac{\log(1+x)}{\log(1+x)+\log N}-\frac{{\alpha^*}}{{\alpha
^*}+1}
\nonumber\\[-8pt]\\[-8pt]\nonumber
&&\qquad = \frac{\log((1+x)/N^{\alpha^*})}{({\alpha^*}+1)(\log N)(1+{\alpha
^*}+\log
((1+x)/N^{\alpha^*})/(\log N))},
\end{eqnarray}
together with the identity
\[
\lambda=\mu \frac{{\alpha^*}}{1+{\alpha^*}}
\]
and equation~(\ref{eqXint}) written at time $N^{\alpha^*}(\log N)  t$
give the representation
%
\begin{eqnarray}
\label{aux1} Z_N(t)&\stackrel{\mathrm{def.}} {=}&\frac{X_N(N^{\alpha^*}(\log N)
t)}{N^{\alpha^*}}\nonumber
\\
&=& \frac{M_N(N^{\alpha^*}(\log N) t)}{N^{\alpha^*}}
\\
&&{}-\frac{\mu
}{(1+{\alpha^*})}\int_0^t
\frac{\log(N^{-{\alpha
^*}}+Z_N(u))}{{\alpha^*}
+1+\log(N^{-{\alpha^*}}+Z_N(u))/(\log N)} \,\diff u.\nonumber
\end{eqnarray}
Let
\[
\tau_N=\inf \bigl\{s>0\dvtx  X_N \bigl(N^{\alpha^*}(\log
N) s \bigr)\geq \delta N^{\alpha^*} \bigr\}.
\]
From Doob's optional stopping time theorem and the fact that $Z_N(\tau
_N\wedge t) \leq N^{-{\alpha^*}} \lceil\delta N^{\alpha^*}\rceil$, we
obtain the inequality
\[
\frac{\lceil\delta N^{\alpha^*}\rceil}{N^{\alpha^*}}+\frac{\mu
}{(1+{\alpha^*})}\frac{\log(\delta+2N^{-{\alpha^*}})}{{\alpha
^*}+1+\log
(\delta+2N^{-{\alpha^*}})/(\log N)} \E(\tau_N)
\geq0.
\]
Since $H_{\delta N^{{\alpha^*}}}\leq N^{\alpha^*}(\log N)\tau_N$,
item (a) is proved.

Assume now that ${\alpha^*}>1$, that is, that $\rho>1/2$. Equation~(\ref
{eqXint}) written at time $ N  t$ gives the relation
\[
X_N(Nt)= M_N(Nt)+\lambda Nt-\mu\int_0^{Nt}
\frac{\log
(1+X_N(s))}{\log(1+X_N(s))+\log N} \,\diff s,
\]
from which we can write that
\begin{eqnarray*}
\lceil\delta N\rceil&\geq&\E\bigl(X_N\bigl(H_{\delta N}\wedge(Nt)
\bigr)\bigr)
\\
&\geq&\E \bigl(H_{\delta N}\wedge(Nt)\bigr) \biggl(\lambda-\mu
\frac{\log(1+\lceil
\delta N\rceil)}{\log(1+\lceil\delta N\rceil)+\log N} \biggr).
\end{eqnarray*}
Letting $t$ go to infinity, the monotone convergence theorem gives us that
%
\begin{equation}
\label{upperbound} \limsup_{N\to+\infty}\frac{1}{N}
\E(H_{\delta N})\leq\frac
{\delta}{\lambda-\mu/2}.
\end{equation}
If we now choose $\delta$ sufficiently small so that, with high
probability, the component $L_2^N(t)$ is still of the order of $N$ at
time $\delta N/(\lambda-\mu/2)$, a straightforward coupling between
$L_1^N$ and some $X_N^\eta$ (recall the notation $X_N^{\eta}$ from
the proof of Proposition~\ref{initprop}) ensures that (\ref{upperbound}) holds as well for the process $(L_1^N(t))$. This completes the
proof of Proposition~\ref{Job}.
\end{pf}

\section{A local equilibrium}\label{ts2}
This section is essentially devoted to the behavior of our
two-dimensional process on the time scale $t\mapsto N^{\alpha^*}(\log
N)t$, when the initial state is $(L_1^N(0),L_2^N(0))=(\delta N^{{\alpha^*}
},N)$ and $\rho_1<1/2$. The following result, Proposition~\ref
{Fluid}, shows that on this time scale the sample paths of $(L_1^N(t))$
have values of the order of $x N^{\alpha^*}$, where $0<x<1$. When the
initial value of $(L_1^N(t))$ is $N^{\alpha^*}$, we shall prove in
Theorem~\ref{theostab} that the process is stabilized around
$N^{\alpha^*}$. At first sight, the interest of Proposition~\ref{Fluid}
and of the associated central limit theorem (Proposition~\ref{tclgen}
below) may seem marginal. This is not true at all since, as we shall
see in Section~\ref{ts3}, the process is also of the order of $\gamma
(t)N^{\alpha^*}$ on the fluid time scale $t\mapsto Nt$. Furthermore, the
main difficulty in the key technical result of Theorem~\ref{lem1} is
precisely connected to the interaction of these two time scales
$t\mapsto N^{\alpha^*}(\log N)t$ and $t\mapsto Nt$.

Recall the notation ${\alpha^*}=\rho_1/(1-\rho_1)$.

\begin{proposition}\label{Fluid}
If $\rho_1<1/2$ and $(L^N_1(t),L^N_2(t))$ is the solution to the
SDE~(\ref{SDE}) with initial conditions $L_2^N(0)=N$ and $L_1^N(0)\sim
\delta N^{\alpha^*}$ for some $\delta\in(0,1]$, then the sequence of
stochastic processes
\[
\biggl(\frac{L_1^N(N^{\alpha^*}(\log N) t)}{N^{\alpha^*}} \biggr)
\]
converges in distribution to $(h(t))$ defined by
%
\begin{equation}
\label{eqh} \cases{ h\equiv1, &\quad if $\delta=1$,
\vspace*{3pt}\cr
\displaystyle\int
_{\delta}^{h(t)}\frac{1}{\log(u)} \,\diff u=-
\frac{\mu
t}{(1+{\alpha^*})^2}, &\quad if $\delta\neq1$.}
\end{equation}
\end{proposition}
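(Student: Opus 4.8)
The plan is to mimic the route of Propositions~\ref{initprop} and~\ref{Job}: reduce the two--dimensional dynamics to the one--dimensional saturated process and then perform a fluid analysis on the time scale $t\mapsto N^\alpetu(\log N)t$. Since $\rho_1<1/2$ gives $\alpetu<1$, the whole horizon $N^\alpetu(\log N)T$ is $o(N)$, so the number of jumps of $L_2^N$ on it is $O(N^\alpetu\log N)=o(N)$ with high probability; hence $L_2^N(s)=N(1+o(1))$ uniformly and, crucially, $\log(1+L_2^N(s))=\log N+o(1)$ uniformly on $[0,N^\alpetu(\log N)T]$. I would emphasize a point that is absent on the coarser scale $t\mapsto N^t$ of Proposition~\ref{initprop}: here the correction in the denominator of $W_1$ matters at the precision $1/\log N$, and a sandwich $X_N^\eta\leq L_1^N\leq X_N^\gamma$ with \emph{fixed} $\eta<1<\gamma$ is too lossy, because the rescaled equilibrium of $X_N^\delta$ sits at $\delta^\alpetu$ rather than at $1$. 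One must therefore either feed the sharp estimate $\log(1+L_2^N)=\log N+o(1)$ directly into the dynamics, or use a coupling with $\eta_N,\gamma_N\to1$; in either case the effective denominator is $\log N+o(1)$, indistinguishable from $\log N$ in every limit below.

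Granting this reduction, I would start from the representation obtained in the proof of Proposition~\ref{Job}: using the elementary identity~\eqref{eqelem} and $\lambda=\mu\alpetu/(1+\alpetu)$, Equation~\eqref{eqXint} at time $N^\alpetu(\log N)t$ divided by $N^\alpetu$ gives, with $Z_N(t)=X_N(N^\alpetu(\log N)t)/N^\alpetu$,
\[
Z_N(t)=Z_N(0)+\frac{M_N(N^\alpetu(\log N)t)}{N^\alpetu}-\frac{\mu}{1+\alpetu}\int_0^t\frac{\log(N^{-\alpetu}+Z_N(u))}{\alpetu+1+\log(N^{-\alpetu}+Z_N(u))/\log N}\,\diff u,
\]
where now $Z_N(0)\to\delta$. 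The martingale term is negligible: by~\eqref{croc} its increasing process at time $N^\alpetu(\log N)t$ is at most $(\lambda+\mu)N^\alpetu(\log N)t$, so after division by $N^\alpetu$ its quadratic variation is $O((\log N)/N^\alpetu)\to0$, and Doob's inequality makes $\sup_{s\leq T}|M_N(N^\alpetu(\log N)s)|/N^\alpetu\to0$ in probability. For fixed $z>0$ the integrand tends to $\log(z)/(\alpetu+1)$, which singles out the autonomous ODE $h'(t)=-\mu\log(h(t))/(1+\alpetu)^2$ with $h(0)=\delta$; separating variables yields exactly~\eqref{eqh}, the case $\delta=1$ being the stable equilibrium $\log 1=0$ with $h\equiv1$.

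To turn this into a proof I would run the usual tightness/identification scheme. The drift integrand is bounded and the jumps of $Z_N$ are $O(N^{-\alpetu})$, so on the event that $Z_N$ stays in a fixed compact $[\eps_0,C]\subset(0,\infty)$ the sequence $(Z_N)$ is tight. On such a set $N^{-\alpetu}+Z_N\to Z_N$ and $\log N\to\infty$ uniformly, so the integrand converges uniformly to $\log(Z_N)/(\alpetu+1)$; any subsequential limit $Z$ then satisfies $Z(t)=\delta-\frac{\mu}{(1+\alpetu)^2}\int_0^t\log Z(u)\,\diff u$. Since $z\mapsto\log z$ is locally Lipschitz on $(0,\infty)$, this ODE has a unique solution as long as it stays in $(0,\infty)$; for $\delta\leq1$ the solution increases monotonically from $\delta$ toward $1$ without reaching it, hence stays in $[\delta,1)$ and is globally defined, so $Z=h$ and the convergence holds along the full sequence. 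The reduction of the first paragraph then transfers the statement from $X_N$ to $L_1^N$, as in Proposition~\ref{initprop}.

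The main obstacle is exactly the a priori confinement of $Z_N$ to a compact subset of $(0,\infty)$, uniformly on $[0,T]$ and with high probability. The naive pure--birth bound $X_N(s)\leq X_N(0)+\mathcal{N}_\lambda([0,s])$ is useless on this scale, since it only gives $Z_N\leq\delta+\lambda(\log N)T(1+o(1))\to\infty$; the confinement must come from the restoring drift, which is positive for $z<1$ and negative for $z>1$, making $z=1$ stable. I would obtain the upper bound from this negative drift above the equilibrium together with the monotonicity of the birth--death process in its initial state and a hitting--time/supermartingale estimate of the kind used for Proposition~\ref{Job}. The lower bound is the more delicate one, because $\log$ diverges at $0$ and the integrand is then of size $\log N$ rather than $O(1)$; here one uses that near the origin the death rate $\mu\log(1+x)/(\log(1+x)+\log N)$ vanishes while births keep rate $\lambda$, so the origin is strongly repelling, and a comparison with a pure--birth/reflected walk on this time scale shows $Z_N$ stays above a fixed $\eps_0<\delta$ throughout.
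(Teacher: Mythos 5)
Your proposal is correct and follows essentially the same route as the paper's proof: the rescaled equation~\eqref{aux1} derived from~\eqref{eqelem} with a negligible martingale, a priori confinement of $Z_N$ to a compact subset of $(0,\infty)$ via the restoring drift and a coupling with a reflected random walk (the paper executes exactly the exponential estimate you allude to, namely Kingman's inequality on excursion maxima combined with a Chernoff bound on the number of excursions --- note that the first-moment hitting-time bounds in the style of Proposition~\ref{Job} would by themselves not control $\P(H_{3N^\alpetu}\leq T)$, so the supermartingale/Kingman version is indeed the needed ingredient), followed by tightness through the modulus-of-continuity criterion and identification of the unique solution of the limiting ODE. Your opening observation --- that the fixed-$\eta,\gamma$ sandwich of Proposition~\ref{initprop} is too lossy at this precision because the equilibrium of $X_N^\delta$ rescales to $\delta^\alpetu$, and that one must instead use $\log L_2^N(s)=\log N+o(1)$ uniformly (or couplings with $\eta_N,\gamma_N\to 1$) --- is a correct and worthwhile refinement of the reduction step that the paper compresses into the phrase ``a similar coupling argument.''
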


\begin{pf}
As we did for Proposition~\ref{initprop}, the convergence is proved
for the process $(X_N(t))$. The result for $(L_1^N(t))$ follows from a
similar coupling argument.

Let us first show that if $X_N(0)=\lfloor\delta N^{\alpha^*}\rfloor$,
then $X_N(t)/N^{\alpha^*}$ remains within $[\delta/3,3]$ on a given time
interval $[0,N^{\alpha^*}(\log N)T]$ with probability tending to~$1$.
Indeed, writing $H_a$ for the hitting time of $a$ by $X_N(N^{\alpha^*}
(\log N)\cdot)$, the strong Markov property of $(X_N(t))$ gives us that
\begin{eqnarray}\label{strongMarkov}
\quad \P (H_{3N^{\alpha^*}}\leq T )
&=& \E \bigl(\mathbf{1}_{\{
H_{2N^{\alpha^*}}<T\}}
\P_{\lceil2N^{\alpha^*}\rceil} (H_{3N^{\alpha^*}
}\leq T-H_{2N^{\alpha^*}}\mid
H_{2N^{\alpha^*}} \leq T ) \bigr)
\nonumber\\[-8pt]\\[-8pt]\nonumber
&\leq&  \P (H_{2N^{\alpha^*}}<T) \P_{\lceil2N^{\alpha^*}\rceil
} (H_{3N^{\alpha^*}}\leq T ) .\nonumber
\end{eqnarray}
Now, due to the monotonicity properties of the service rate of
$(X_N(t))$, if $X_N(0)=2N^{\alpha^*}$ then we can couple $(X_N(t))$ with
the process $(2N^{\alpha^*}+ R_N(t))$ defined by:
\begin{itemize}
\item[--] $R_N \rightarrow R_N+1$ at rate $\lambda$,
\item[--] $R_N \rightarrow R_N-1$ at rate
\[
\mu \frac{\log(2N^{\alpha^*})}{\log(2N^{\alpha^*})+\log N} = \mu \frac{{\alpha^*}}{{\alpha^*}+1} + \frac{C}{\log N},
\]
for some $C>0$,
\item[--] $R_N(0)=0$ and $(R_N(t))$ reflects at $0$,
\end{itemize}
in such a way that $X_N(t)\leq2N^{\alpha^*}+ R_N(t)$ for every $t\geq0$.
Hence, there remains to prove that $(R_N(t))$ does not reach $N^{\alpha^*}
$ in less than $N^{\alpha^*}(\log N)T$ units of time. But by Kingman's
inequality (see Kingman~\cite{Kingman} or relation~(3.3) of
Theorem~3.5 in Robert~\cite{Robert}) and the fact that $\lambda=\mu
{\alpha^*}/(1+{\alpha^*})$, if $\theta_N$ stands for the time span
of the
first excursion of $(R_N(t))$ away from $0$ we have
\[
\P_1 \Bigl(\sup_{s\in[0,\theta_N]}R_N(s)>N^{\alpha^*}
\Bigr) \leq \exp \biggl(-\frac{C'N^{\alpha^*}}{\log N} \biggr) %
\]
for some explicit $C'>0$. Since the $i$th pair of consecutive
excursions is separated by the amount $E_i$, an exponentially
distributed random variable with parameter $\lambda$, and, since these
exponential times are independent of each other, a Chernoff bound on
Poisson random variables gives the relation
\begin{eqnarray*}
&& \P \bigl(\mbox{more than }2\lambda N^{\alpha^*}(\log N)T \mbox{ excursions
before time }N^{\alpha^*}(\log N)T \bigr)
\\
&&\qquad  \leq\P \Biggl(\sum_{i=1}^{2\lambda N^{\alpha^*}(\log N)T}E_i
\leq N^{\alpha^*}(\log N)T \Biggr) \leq e^{-C''N^{\alpha^*}(\log N)T}
\end{eqnarray*}
for some $C''>0$. Coming back to (\ref{strongMarkov}), we obtain that
\[
\P (H_{3N^{\alpha^*}}\leq T )\leq e^{-C''N^{\alpha^*}(\log
N)T} + 2\lambda N^{\alpha^*}(
\log N)T \exp \biggl\{-\frac{C'N^{\alpha
^*}}{\log
N} \biggr\}, %
\]
which tends to $0$ as $N$ tends to infinity. Finally, since the
infinitesimal drift of $X_N(t)$ is positive when $X_N(t)< N^{{\alpha^*}}$,
the same method can be used to show that $(X_N(t))$ remains above
$(\delta/3)N^{\alpha^*}$ on the time interval $[0,N^{\alpha^*}(\log N)T]$,
with overwhelming probability.

Now let $(w_{f}(\xi))$ denote the modulus of continuity of the
function $(f(t))$ on $[0,T]$. That is,
\[
w_{f}(\xi)=\mathop{\sup_{s,t\leq T}}_{\llvert  s-t\rrvert  \leq\xi}
\bigl\llvert f(s)-f(t)\bigr\rrvert.
\]
Let us also write again
\[
Z_N(t)\stackrel{\mathrm{def.}} {=} \frac{X_N(N^{\alpha^*}(\log
N)t)}{N^{\alpha^*}}. %
\]
Using the bounds on $(X_N(t))$ we have just obtained, we can deduce the
existence of a constant $A$ independent of $N$ such that, with
probability tending to $1$, we have, for any $s<t\in[0,T]$,
\[
\int_s^t \biggl\llvert \frac{\log(N^{-{\alpha^*}}+Z_N(u))}{{\alpha
^*}+1+\log
(N^{-{\alpha^*}}+Z_N(u))/(\log N)}
\biggr\rrvert \,\diff u \leq A(t-s). %
\]
Together with relation~(\ref{aux1}) and the fact that its martingale
term vanishes as $N$ gets large, we can conclude that for any
$\varepsilon>0$ and $\eta>0$, there exists $\xi>0$ such that
\[
\P\bigl(w_{Z_N}(\xi)>\eta\bigr)\leq\varepsilon
\]
holds for all $N$. By the tightness criterion of the modulus of
continuity (see Theorem~8.3 in Billingsley~\cite{Billingsley}), the
sequence of processes $(Z_N(t))$ is thus tight and any limiting point
$h$ satisfies the relation
\[
h(t)=\delta-\frac{\mu}{(1+{\alpha^*})^2}\int_0^t \log
\bigl(h(u)\bigr) \,\diff u.
\]
It is easily seen that there is a unique solution to this integral
equation and that its solution can be expressed as the solution to the
fixed point equation~(\ref{eqh}).
Proposition~\ref{Fluid} is proved.
\end{pf}
The above proposition can be seen as a kind of law of large numbers, with
\[
L_1^N\bigl(N^{\alpha^*}\log(N) t\bigr)\sim h(t)
N^{\alpha^*}\qquad\mbox{as }N\rightarrow\infty.
\]
It is thus natural to expect for the corresponding central limit
theorem that
\[
\biggl(\frac{L_1^N(N^{\alpha^*}\log(N) t)- h(t) N^{\alpha^*}}{\sqrt
{N^{\alpha^*}\log(N)}} \biggr)
\]
should converge in distribution to some Gaussian process $(R(t))$.
However, the following proposition shows that such a convergence cannot
hold: the centering term $h(t)$ has to be replaced by a deterministic
term $h_N(t)$ which depends on $N$. As we shall see, $h_N$ is such that
the following expansion holds:
\[
h_N(t)= h(t) -\frac{\mu_1}{({\alpha^*}+1)^3\log N}\int_0^t
\log \bigl(h(u)\bigr)^{2} \,\diff u +o \biggl(\frac{1}{\log N }
\biggr).
\]
In particular, $(h_N(t))$ converges ``slowly'' to $(h(t))$ at the rate
$1/\log N$ instead of a rate $1/N^{{\alpha^*}/2+\varepsilon}$ for which
a ``classic'' centering procedure would be enough. In the end, this
second limit theorem gives the representation
\[
L_1\bigl(N^{\alpha^*}\log(N) t\bigr)= h_N(t)
N^{\alpha^*}+ O \bigl(\sqrt {N^{{\alpha^*}
}\log N} \bigr).
\]

\begin{proposition}[(Central limit theorem)]\label{tclgen}
If $\rho_1<1/2$ and $(L^N_1(t),L^N_2(t))$ is the solution to the
SDE~(\ref{SDE}) with the initial conditions $L_2^N(0)=N$ and
$L_1^N(0)$ being such that
\[
\lim_{N\to+\infty} \frac{L_1^N(0)-\delta N^{\alpha^*}}{\sqrt
{N^{\alpha^*}\log N}}=y,
\]
for some $0<\delta\leq1$ and $y\in\R$. Then we have
\[
\lim_{N\to+\infty} \biggl(\frac{L_1^N(N^{\alpha^*}(\log N) t)-
h_N(t)N^{\alpha^*}}{\sqrt{N^{\alpha^*}\log N}} \biggr)= \bigl(R(t)
\bigr),
\]
for the convergence in distribution, where $(h_N(t))$ is the solution
to the ordinary differential equation
\[
\dot{h}_N(t)= -\frac{\mu_1}{(1+{\alpha^*})}\frac{\log
h_N(t)}{({\alpha^*}+1+\log(h_N(t))/\log N)},
\]
with $h_N(0)=\delta$ and $(R(t))$ is the solution to the following SDE:
%
\begin{equation}
\label{eqR} \diff R(t)=\sqrt{2\lambda_1} \,\diff B(t)-
\frac{\mu_1}{(1+{\alpha^*}
)^2} \frac{R(t)}{h(t)} \,\diff t,
\end{equation}
with $R(0)=y$, $(B(t))$ denoting standard Brownian motion on $\R$ and
$(h(t))$ being defined in (\ref{eqh}).
\end{proposition}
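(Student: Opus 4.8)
The plan is to follow the scheme of the earlier results: first establish the statement for the saturated process $(X_N(t))$ of~\eqref{eqX}, and only then transfer it to $(L_1^N(t))$ by the sandwiching coupling $X_N^\eta\leq L_1^N\leq X_N^\gamma$ used in the proof of Proposition~\ref{initprop}. I work throughout on the event, of probability tending to $1$ by (the proof of) Proposition~\ref{Fluid}, on which $Z_N(t)\stackrel{\text{def.}}{=}X_N(N^\alpet(\log N)t)/N^\alpet$ stays in a fixed compact set $[\delta/3,3]\subset(0,\infty)$ over $[0,T]$; this confines all the quantities below to a region where the functions involved are smooth, with bounded derivatives. Introduce the centred, rescaled fluctuation
\[
U_N(t)=\frac{X_N(N^\alpet(\log N)t)-h_N(t)N^\alpet}{\sqrt{N^\alpet\log N}}=\sqrt{\frac{N^\alpet}{\log N}}\,\bigl(Z_N(t)-h_N(t)\bigr),
\]
so that $U_N(0)\to y$. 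Subtracting the integral form of the ODE for $h_N$ from representation~\eqref{aux1} of $Z_N$ and multiplying by $\sqrt{N^\alpet/\log N}$ yields a decomposition $U_N=U_N(0)+\widetilde M_N+D_N$, with $\widetilde M_N(t)=M_N(N^\alpet(\log N)t)/\sqrt{N^\alpet\log N}$ and $D_N$ the rescaled difference of the two drifts.

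For the martingale part I would compute its bracket from~\eqref{croc}: after the change of variables, using that the departure rate $\log(1+X_N)/(\log(1+X_N)+\log N)$ is close to $\alpet/(\alpet+1)$ while $X_N$ is of order $N^\alpet$, one finds $\croc{\widetilde M_N}(t)\to 2\lambda_1 t$. Since the jumps of $\widetilde M_N$ have size $1/\sqrt{N^\alpet\log N}\to 0$, the martingale central limit theorem gives $\widetilde M_N\Rightarrow \sqrt{2\lambda_1}\,B$, matching~\eqref{eqR}.

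The core of the argument is $D_N$. Writing $g_N(z)=\log(N^{-\alpet}+z)/(\alpet+1+\log(N^{-\alpet}+z)/\log N)$ for the $Z_N$-drift and $\bar g_N(w)=\log w/(\alpet+1+\log w/\log N)$ for the $h_N$-drift, I split $g_N(Z_N)-\bar g_N(h_N)$ into $[g_N(Z_N)-\bar g_N(Z_N)]$, which is $O(N^{-\alpet})$ and vanishes after rescaling, and $[\bar g_N(Z_N)-\bar g_N(h_N)]$, to which a second-order Taylor expansion around $h_N$ applies. The linear term $\bar g_N'(h_N)(Z_N-h_N)$ contributes, after multiplication by $\sqrt{N^\alpet/\log N}$, exactly $\bar g_N'(h_N)\,U_N\to U_N(t)/((\alpet+1)h(t))$, while the quadratic remainder is $O(\sqrt{\log N/N^\alpet}\,U_N^2)\to 0$ on the confinement set. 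This is precisely where centring by $h_N$ rather than by $h$ is essential: replacing $h_N$ by $h$ leaves a deterministic bias of order $1/\log N$ per unit of rescaled time, which after multiplication by $\sqrt{N^\alpet/\log N}$ diverges; the defining ODE of $h_N$ cancels it exactly. Collecting terms gives
\[
U_N(t)=U_N(0)+\widetilde M_N(t)-\frac{\mu_1}{(\alpet+1)^2}\int_0^t \frac{U_N(r)}{h(r)}\,\diff r+o(1),
\]
and the restoring (negative) sign of the drift yields, through Gronwall on the confinement event, an a priori bound on $\sup_{[0,T]}|U_N|$ that is tight; this bound also controls the quadratic remainder a posteriori and gives tightness of $(U_N)$ as in the modulus-of-continuity argument of Proposition~\ref{Fluid}. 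Any limit point solves the linear SDE~\eqref{eqR}, whose solution is unique (an Ornstein--Uhlenbeck process with time-dependent coefficient $-\mu_1/((\alpet+1)^2 h(t))$), whence $U_N\Rightarrow R$.

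I expect the main obstacle to be the transfer from $(X_N(t))$ to $(L_1^N(t))$ at this fine scale. On the time scale $t\mapsto N^\alpet(\log N)t$ the second coordinate moves only by $O(N^\alpet\log N)=o(N)$, so $\log(1+L_2^N)=\log N+o(1)$ and the coupling pins $L_1^N$ to $X_N$; but $L_2^N$ has a definite downward drift $\lambda_2-\mu_2(1-\rho_1)<0$, which increases the service of node~$1$ and thus produces a \emph{one-sided} bias on $L_1^N$ relative to the frozen-$L_2$ process $X_N$. The delicate point is therefore to track the $o(1)$ correction to $\log(1+L_2^N)$ and verify that its accumulated effect on the drift of $L_1^N$ remains $o(\sqrt{N^\alpet\log N})$ uniformly on $[0,T]$, so that the same centring $h_N$ and the same limit $R$ survive for $L_1^N$; this requires sharpening the constants $\eta,\gamma$ of the Proposition~\ref{initprop} coupling to $1+o(1)$ and estimating the bias carefully, rather than invoking a fixed-window sandwich.
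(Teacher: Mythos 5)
Your proposal follows essentially the same route as the paper's own proof: the paper likewise establishes the result at the level of the saturated process, shows that the bracket of the rescaled martingale converges to $(2\lambda_1 t)$ and invokes the martingale central limit theorem, linearizes the drift function $d_N$ (your $\bar g_N$) around $h_N$ on the confinement event supplied by Proposition~\ref{Fluid} --- which is exactly where centring by $h_N$ instead of $h$ is needed --- then gets an a priori bound via Gronwall's lemma, tightness via the modulus-of-continuity criterion, and identifies the limit as the unique solution of the linear SDE~\eqref{eqR}. The only point of divergence is your last paragraph: the transfer from $(X_N(t))$ to $(L_1^N(t))$, which you rightly flag as delicate at the scale $\sqrt{N^\alpetu\log N}$ because of the order-one drift of $L_2^N$, is handled in the paper only by the one-line remark (in the proof of Proposition~\ref{Fluid}) that a ``similar coupling argument'' applies, so your more quantitative treatment of the bias refines, rather than contradicts, the published argument.
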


\begin{pf}
From relations~(\ref{croc}) and (\ref{eqelem}), the increasing
process of the martingale
\[
\bigl(\overline{M}_N(t) \bigr)\stackrel{\mathrm{def.}} {=} \biggl(
\frac{M_N(N^{\alpha^*}(\log N) t)}{\sqrt{N^{\alpha^*}\log
N}} \biggr)
\]
is given by
\[
\biggl(2\lambda t +\frac{\mu}{(1+{\alpha^*})\log N}\int_0^t
\frac
{\log
(N^{-{\alpha^*}}+Z_N(u))}{{\alpha^*}+1+\log(N^{-{\alpha
^*}}+Z_N(u))/(\log N)} \,\diff u \biggr).
\]
By Proposition~\ref{Fluid}, this quantity converges in distribution to
$(2\lambda t)$. Hence, using the martingale criterion for convergence
to a Brownian motion (see Theorem~1.4, page~339 of Ethier and
Kurtz~\cite{Ethier01}) we can conclude that $ (\overline
{M}_N(t) )$ converges in distribution to Brownian motion run at
speed $2\lambda$.

With the same notation as in the proof of the previous proposition, the
relation satisfied by $(Z_N(t))$ yields
%
\begin{eqnarray}\label{eqZ2}
\qquad \overline{Z}_N(t)&\stackrel{\mathrm{def.}} {=}&\sqrt{
\frac{N^{\alpha^*}
}{\log N}} \bigl(Z_N(t)-h_N(t)
\bigr)\nonumber
\\
&=&\overline {Z}_N(0)+\overline{M}_N(t)
\\
&&{}-\frac{\mu}{(1+{\alpha^*})^2} \int_0^t \sqrt{
\frac{N^{\alpha^*}}{\log N}} \bigl(d_N \bigl(N^{-{\alpha^*}
}+Z_N(u)
\bigr)-d_N\bigl(h_N(u)\bigr) \bigr) \,\diff u,\nonumber
\end{eqnarray}
where
\[
d_N(x)=\frac{\log x}{1+(\log x)/(({\alpha^*}+1)\log N)}.
\]
Let $T>0$ and $\varepsilon>0$, and let $A_N$ be the event
\[
A_N= \biggl\{\inf_{0\leq s\leq T} Z_N(s)\geq
\frac{\delta}{2} \biggr\}.
\]
By Proposition~\ref{Fluid} and the fact that $(h(t))$ is nondecreasing
with $h(0)=\delta$, there exists $N_0$ such that for any $N\geq N_0$,
$\P(A_N^c)\leq\varepsilon$.

The properties of the derivate $d_N'$ of $d_N$ are used to analyze the
limit of $(\overline{Z}_N(t))$. Observe that
%
\begin{equation}
\label{derivee} d_N'(x) = \biggl\{x \biggl(1+
\frac{\log x}{({\alpha^*}+ 1)\log N} \biggr) \biggr\}^{-1}.
\end{equation}
On the event $A_N$, we obtain from relation~(\ref{eqZ2}) and the fact
that for $N$ large enough,
\[
\sup_{[\delta/2,\infty)} d_N' \leq
\frac{4}{\delta} %
\]
that for any $t\leq T$,
\begin{eqnarray*}
&& \bigl\llvert \overline{Z}_N(t)\bigr\rrvert \leq\bigl\llvert
\overline{Z}_N(0)\bigr\rrvert +\bigl\llvert \overline{M}_N(t)
\bigr\rrvert
+\frac{4 \mu}{\delta(1+{\alpha^*})^2} \int_0^t \bigl\llvert
\overline{Z}_N(u)\bigr\rrvert \,\diff u.
\end{eqnarray*}
As a consequence, Gronwall's lemma gives us that on the set $A_N$
\[
\sup_{0\leq t\leq T}\bigl\llvert \overline{Z}_N(t)\bigr
\rrvert \leq \Bigl(\bigl\llvert \overline{Z}_N(0)\bigr\rrvert +\sup
_{0\leq t\leq T} \bigl\llvert \overline {M}_N(t)\bigr\rrvert
\Bigr)e^{CT},
\]
where $C=4\mu/(2(1+{\alpha^*})^2)$. Using the fact that the sequence of
processes $(M_N(t))$ is tight for the topology of uniform convergence,
together with our assumption on $X_N(0)$, we obtain that there exist
$N_1$ and $K>0$ such that for any $N\geq N_1$
\[
\P\bigl(B_{N,K}^c\bigr)\leq\varepsilon\qquad\mbox{where }
B_{N,K}= \biggl\{ \inf_{0\leq s\leq T} Z_N(s)\geq
\frac{\delta}{2}, \sup_{0\leq t\leq
T}\bigl\llvert
\overline{Z}_N(t)\bigr\rrvert \leq K \biggr\}.
\]
Next, recall the notation $w_f$ for the modulus of continuity of the
function $f$. On the event $B_{N,K}$, relation~(\ref{eqZ2}) gives us
that for any $\xi>0$,
\[
w_{\overline{Z}_N}(\xi)\leq w_{\overline{M}_N}(\xi)+ C \mathop{\sup
_{s,t\leq T}}_{\llvert  s-t\rrvert  \leq\xi}\int_s^t
\bigl\llvert \overline {Z}_N(u)\bigr\rrvert \,\diff u\leq
w_{\overline{M}_N}(\xi)+CK\xi.
\]
The sequence of processes $(\overline{M}_N(t))$ being tight, this
relation and the fact that $\P(B_{N,K})>1-\varepsilon$ show that for
any $\eta>0$, there exists $\xi_0$ such that for every $\xi\leq\xi
_0$ and $N\geq N_0$,
\[
\P\bigl(w_{Z_N}(\xi)\geq\eta\bigr)\leq3\varepsilon.
\]
Since we can apply this reasoning to any $\varepsilon>0$, here again
the tightness criterion of the modulus of continuity enables us to
conclude that the sequence of processes $(\overline{Z}_N(t))$ is tight.

Let $(R(t))$ be one of the limiting points. By using Skorohod's
representation theorem (see Ethier and Kurtz~\cite{Ethier01}) up to a
change of probability space, we can assume that the convergence to
$(R(t))$ holds almost surely on $[0,T]$ for the uniform norm. But on
the set $B_{N,K}$, (\ref{derivee}) and Lebesgue's differentiation
theorem (see Rudin~\cite{Rudin}, e.g.) guarantees that the
integral term on the right-hand side of equation~(\ref{eqZ2})
converges almost surely to
\[
\int_0^t \frac{R(u)}{h(u)} \,\diff u.
\]
Consequently $(R(t))$ satisfies the SDE~(\ref{eqR}) with $R(0)=y$, and
the uniqueness of such a solution gives us the convergence in
distribution we were seeking.
\end{pf}
A direct consequence of this result is that, starting from $N^{\alpha^*}
$, the process $(L_1^N(t))$ behaves like an Ornstein--Uhlenbeck process
around $N^{\alpha^*}$.

\begin{theorem}[{[A stable regime for $(L_1^N(t))$]}]\label{theostab}
If $\rho_1<1/2$, $L_2^N(0)=N$ and $L_1^N(0)$ is such that, for some
$y\in\R$,
\[
\lim_{N\to+\infty}\frac{L_1^N(0)- N^{{\alpha^*}}}{\sqrt
{N^{{\alpha^*}
}\log N}}=y,
\]
then the sequence of processes
\[
\biggl(\frac{L_1^N (N^{\alpha^*}(\log N) t )-N^{\alpha
^*}}{\sqrt
{N^{{\alpha^*}}\log N}} \biggr)
\]
converges in distribution to an Ornstein--Uhlenbeck process $(Z(t))$,
that is, the solution to
the SDE
%
\begin{equation}
\label{OU} \diff Z(t)=\sqrt{2\lambda_1} \,\diff B(t)-
\frac{\mu_1}{({\alpha^*}
+1)^2} Z(t)\,\diff t, \qquad Z(0)=y,
\end{equation}
where $(B(t))$ denotes standard Brownian motion.
\end{theorem}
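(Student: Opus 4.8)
The plan is to obtain this theorem as the special case $\delta = 1$ of the central limit theorem, Proposition~\ref{tclgen}. The initial condition assumed here, namely that $(L_1^N(0) - N^{\alpetu})/\sqrt{N^{\alpetu}\log N}$ converges to $y$, is precisely the hypothesis of Proposition~\ref{tclgen} upon taking $\delta = 1$. That proposition then already delivers the convergence in distribution of
\[
\left(\frac{L_1^N(N^{\alpetu}(\log N) t) - h_N(t) N^{\alpetu}}{\sqrt{N^{\alpetu}\log N}}\right)
\]
to the diffusion $(R(t))$ solving \eqref{eqR}. Thus the entire task reduces to identifying the deterministic centering $h_N$ and the limiting drift, both of which simplify drastically when $\delta = 1$, and no new probabilistic estimate is required.

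First I would show that the centering satisfies $h_N \equiv 1$ for every $N$. Since $\log x$ vanishes at $x = 1$, the constant function $h_N(t) = 1$ annihilates the right-hand side of the defining ODE
\[
\dot{h}_N(t) = -\frac{\mu_1}{(1+\alpetu)}\frac{\log h_N(t)}{\alpetu+1+\log(h_N(t))/\log N},
\]
and it also meets the initial condition $h_N(0) = \delta = 1$. The right-hand side is smooth, hence locally Lipschitz, in a neighborhood of $1$, the denominator being bounded away from $0$ there; so uniqueness for this ODE forces $h_N(t) = 1$ for all $t \geq 0$. Consequently the centering $h_N(t)N^{\alpetu}$ appearing in Proposition~\ref{tclgen} is identically $N^{\alpetu}$, which is exactly the centering used in the present statement. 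The two scaled processes therefore coincide, and so do their weak limits.

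It remains to identify that limit. By the $\delta = 1$ case of Proposition~\ref{Fluid}, the function $(h(t))$ of \eqref{eqh} is identically $1$. Substituting $h \equiv 1$ into the SDE~\eqref{eqR}, the drift $-\mu_1 R(t)/\bigl((1+\alpetu)^2 h(t)\bigr)$ reduces to $-\mu_1 R(t)/(1+\alpetu)^2$, so that $(R(t))$ solves
\[
\diff R(t) = \sqrt{2\lambda_1}\,\diff B(t) - \frac{\mu_1}{(\alpetu+1)^2}\,R(t)\,\diff t, \qquad R(0) = y,
\]
which is precisely the Ornstein--Uhlenbeck equation~\eqref{OU}. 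Hence $(R(t))$ and $(Z(t))$ have the same law and the claimed convergence follows. I expect essentially no obstacle beyond this bookkeeping; the one point deserving a moment's care is the invariance $h_N \equiv 1$, which is exactly what makes the $N$-dependent centering of Proposition~\ref{tclgen} collapse to the constant $N^{\alpetu}$ and thereby yields a genuine ergodic Ornstein--Uhlenbeck limit rather than a process with a slowly drifting mean.
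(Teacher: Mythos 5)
Your proposal is correct and is exactly the paper's route: the paper presents Theorem~\ref{theostab} as a direct consequence of Proposition~\ref{tclgen} with $\delta=1$, where $h_N\equiv 1$ (so the centering collapses to $N^{\alpetu}$) and $h\equiv 1$ turns the SDE~\eqref{eqR} into the Ornstein--Uhlenbeck equation~\eqref{OU}. Your uniqueness argument for $h_N\equiv 1$ is the right justification for the step the paper leaves implicit.
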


To complete this section on the time scale $t\to N^{{\alpha^*}}\log N$,
the following proposition shows that, if $\rho_1<1/2$ and
$L_2^N(0)=N$, then the process $(L_1^N(t))$ always\vspace*{1pt} reaches the stable
regime around $N^{\alpha^*}$ described in the previous theorem. In
particular, if $L_1^N(0)=0$, the hitting time of $\lfloor N^{\alpha^*}
\rfloor$ is smaller than $N^\beta$ for any $\beta>{\alpha^*}$.

\begin{proposition}\label{tajine}
Let
\[
T_N\stackrel{\mathit{def.}} {=}\inf \bigl\{s>0\dvtx
L_1^N(s)\in N^{\alpha^*}+ \bigl[-
\sqrt{N^{\alpha^*}\log N}, \sqrt{N^{\alpha
^*}\log N} \bigr] \bigr\}.
\]
If $\rho_1<1/2$, $L_2^N(0)=N$ and $L_1^N(0)\leq N^\beta$ for some
$\beta\in({\alpha^*},1)$, then
\[
\lim_{N\to+\infty}\P \biggl(\frac{T_N}{N^{\beta}(\log N)^2}\leq 1 \biggr)=1.
\]
\end{proposition}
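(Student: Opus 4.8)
The plan is to reduce everything to the one-dimensional saturated process $(X_N(t))$ of~\eqref{eqX} and to bound the mean time it needs to cross the band $B_N=N^\alpetu+[-\sqrt{N^\alpetu\log N},\sqrt{N^\alpetu\log N}]$; since the horizon $N^\beta(\log N)^2$ will turn out to be much larger than this mean, Markov's inequality finishes the argument. First I would set up the coupling already used in Propositions~\ref{initprop} and~\ref{Job}: because $\beta<1$ the horizon is $o(N)$, so with probability tending to $1$ the coordinate $L_2^N$ stays in $[\eta N,\gamma N]$ for some $\eta<1<\gamma$, and on that event $X_N^\eta(s)\le L_1^N(s)\le X_N^\gamma(s)$. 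Starting from above $B_N$, as soon as the upper bound $X_N^\gamma$ reaches the top endpoint of $B_N$ the squeezed process $L_1^N$ is at most at that endpoint, and since its jumps have size $1$ while $B_N$ has width $\gg1$, $L_1^N$ must have entered $B_N$ by then; symmetrically from below with $X_N^\eta$. So it is enough to bound the crossing time of $B_N$ by the sandwiching saturated processes.

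For the descent I would work with the drift $b(x)=\lambda_1-\mu_1\log(1+x)/(\log(1+x)+\log N)$ of $X_N$. By the elementary identity~\eqref{eqelem} and $\lambda_1=\mu_1\alpetu/(1+\alpetu)$, one has $-b(x)=\mu_1\ell(x)/\big((\alpetu+1)[(1+\alpetu)\log N+\ell(x)]\big)$ with $\ell(x)=\log((1+x)/N^\alpetu)$, so $-b$ is of order $1$ while $x$ is of order $N^\gamma$ with $\gamma$ away from $\alpetu$, and only of order $\ell(x)/\log N$ near $B_N$. The crucial object is the Lyapunov function $g(x)=\int_{m}^{x}\diff y/(-b(y))$, with $m$ the upper endpoint of $B_N$, whose derivative $1/(-b)$ is tuned so that the generator satisfies $\mathcal{A}g(x)=b(x)g'(x)+\tfrac12(\lambda_1+\mu_1W_1(x))g''(x)+O(g''')=-1+\tfrac12(\lambda_1+\mu_1W_1(x))g''(x)+O(g''')$. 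One checks $g''<0$, which makes the second-order term favorable, and $g'''$ is negligible even where $g$ is steep, so $\mathcal{A}g\le-1/2$ above $B_N$ for $N$ large. Optional stopping then yields $\E(\text{crossing time})\le 2g(N^\beta)$, and the integral splits into a bulk part of order $N^\beta$ and a slow part from the vicinity of $B_N$ of order $N^\alpetu(\log N)^2$; as $\beta>\alpetu$, the total is $O(N^\beta)$.

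The ascent from below, which contains the advertised case $L_1^N(0)=0$, is symmetric: with $g(x)=\int_x^{m'}\diff y/b(y)$, $m'$ the lower endpoint, one gets $g(0)=O(N^\alpetu(\log N)^2)$, consistent with Proposition~\ref{Job} and Relation~\eqref{Goisot}. Thus in both directions the mean crossing time of $B_N$ is $O(N^\beta)$, and Markov's inequality gives $\P(T_N>N^\beta(\log N)^2)\le O((\log N)^{-2})+\P(\text{coupling fails})\to0$, which is the claim.

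The hard part is that $B_N$ is narrow, of relative width $\sqrt{\log N/N^\alpetu}=o(1)$---exactly the fluctuation scale of the Ornstein-Uhlenbeck limit of Theorem~\ref{theostab}. This creates two difficulties. Because $-b$ vanishes on $B_N$, any uniform lower bound on $|b|$ is hopeless (it would only give a bound of order $N^{\beta+\alpetu/2}\sqrt{\log N}$): weighting the path by the Lyapunov function $g$, and controlling the discrete corrections to $\mathcal{A}g$ where $g'$ is as large as $\sqrt{N^\alpetu\log N}$, is unavoidable. More seriously, the reduction to a saturated process is only approximate: when $\rho_2>1/2$ and $L_1^N(0)$ is large, $L_2^N$ can transiently rise by up to $O(N^\beta)$ before its restoring negative drift pulls it back towards $N$, which displaces the instantaneous equilibrium $(1+L_2^N)^\alpetu-1$ of $L_1^N$ from $N^\alpetu$ by $O(N^{\alpetu+\beta-1})$, possibly more than the half-width $\sqrt{N^\alpetu\log N}$. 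Reconciling this motion with entry into the fixed band $B_N$---by sending $\eta,\gamma\to1$ and using that $L_2^N$ relaxes back to order $N$ on the same $O(N^\beta)$ scale on which $L_1^N$ descends---is the genuinely delicate step, and the one I expect to require the most care.
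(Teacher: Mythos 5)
Your one-dimensional argument is correct and is a genuine, in fact sharper, variant of the paper's. The paper also reduces to the saturated process $(X_N(t))$ of \eqref{eqX} and also concludes through optional stopping plus the Markov inequality, but in place of your Green-function Lyapunov $g(x)=\int_m^x \diff y/(-b(y))$ it iterates the linear drift identity \eqref{Dyn} along the deterministic thresholds $x_i^N=N^\alpetu+N^{\alpetu-i\eps}$, using at each stage only the uniform drift bound of order $\log(1+N^{-i\eps})/\log N$ and stopping after $i^*\approx\alpetu/(2\eps)$ stages of expected duration $O(N^{\alpetu+\eps}\log N)$ each; the $(\log N)^2$ headroom in the statement exists precisely to absorb this staging. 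Your single integrated bound $g(N^\beta)=O(N^\beta+N^\alpetu(\log N)^2)$ does the whole descent in one step and recovers, without staging losses, an estimate of the quality of \eqref{Goisot}. One simplification: you can drop the $g''$, $g'''$ bookkeeping entirely, since $-b$ is nondecreasing above the band, so $g$ is concave, the one-step differences satisfy $g(x+1)-g(x)\le g'(x)\le g(x)-g(x-1)$, and the birth-and-death generator obeys $\mathcal{A}g(x)\le b(x)g'(x)=-1$ exactly; the ascent from below is symmetric.

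The genuine gap is the sandwich step, which you have half-diagnosed yourself. As written, the clause ``as soon as $X_N^\gamma$ reaches the top endpoint of $B_N$'' is vacuous for fixed $\gamma>1$: the process $X_N^\gamma$ has zero drift at $(\gamma N)^\alpetu=\gamma^\alpetu N^\alpetu$ and \emph{positive} drift of order $\log\gamma/\log N$ at the level $N^\alpetu+\sqrt{N^\alpetu\log N}$, so reaching the top of $B_N$ from above requires an excursion of depth $(\gamma^\alpetu-1)N^\alpetu\gg\sqrt{N^\alpetu\log N}$ against that drift, an event of probability exponentially small in $N^\alpetu/\log N$ by the same Kingman estimate \eqref{joblot} used in Theorem~\ref{lem1}; with high probability $X_N^\gamma$ never enters $B_N$ within the horizon, so the squeezing argument proves nothing about $L_1^N$ hitting the \emph{fixed} band. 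The quantitative repair must let $\eta,\gamma$ depend on $N$: over the horizon $N^\beta(\log N)^2=o(N)$ one has $|L_2^N(s)-N|=O(N^\beta(\log N)^2)$ with high probability, so one may take $\gamma_N-1=O(N^{\beta-1}(\log N)^2)$, which displaces the sandwich equilibria from $N^\alpetu$ by $O(N^{\alpetu+\beta-1}(\log N)^2)$ --- inside the half-width $\sqrt{N^\alpetu\log N}$ only when $\beta<1-\alpetu/2$; running your Lyapunov computation directly on $L_1^N$, with $\log L_2^N(s)=\log N+O(N^{\beta-1}(\log N)^2)$ replacing $\log N$ in $b$, hits the same constraint. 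For $\beta\ge 1-\alpetu/2$ an additional argument is needed, for instance that $L_1^N$, being skip-free and tracking $(L_2^N)^\alpetu$ within its fluctuation scale, must cross the fixed band when $(L_2^N)^\alpetu$ itself sweeps back through $N^\alpetu$. To be fair, the paper is silent on exactly this point --- it proves the statement for $(X_N(t))$ and transfers it by the fixed-$(\eta,\gamma)$ coupling of Proposition~\ref{initprop}, which is too coarse at the band scale for the same reason --- so you have correctly isolated a difficulty the paper glosses over; but since your write-up also leaves it unresolved, the proposal is incomplete at this step.
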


\begin{pf}
As before, the result is proved for the process $(X_N(t))$ instead of
$(L_1^N(t))$.
Suppose that $X_N(0)=\lfloor N^\beta\rfloor$. The SDE~(\ref{eqXint})
and relation~(\ref{eqelem}) show that for any stopping time $\tau$,
one has
%
\begin{eqnarray}\label{Dyn}
\E\bigl(X_N(t\wedge\tau)\bigr)&=& X_N(0)\nonumber
\\
&&{}-\frac{\mu}{({\alpha^*}+1)\log N}
\\
&&\quad{}\times\E \biggl(\int_0^{t\wedge\tau}
\frac{\log((1+X_N(u))/N^{\alpha^*})}{1+{\alpha^*}+\log
((1+X_N(u))/N^{\alpha^*}
)/\log N} \,\diff u \biggr).\nonumber
\end{eqnarray}
Defining again
\[
H_x\stackrel{\mathrm{def.}} {=}\inf\bigl\{s>0\dvtx  L_1(s)=
\lfloor x\rfloor\bigr\},
\]
and setting $x_0^N=2\lceil N^{{\alpha^*}}\rceil$, the above relation gives
\[
0\leq X_N(0)-\frac{\mu}{({\alpha^*}+1)\log N}\frac{\log
2}{1+{\alpha^*}
+(\log2)/\log N} \E (t\wedge
H_{x_0^N} ).
\]
Consequently, letting $t$ and then $N$ go to infinity yields
\[
\limsup_{N\to+\infty} \frac{\E(H_{x_0^N})}{N^\beta\log N} <+\infty.
\]
We can therefore assume that $X_N(0)=2\lceil N^{{\alpha^*}}\rceil$.
Setting this time $x^N_1=\lfloor N^{\alpha^*}\rfloor+\lfloor
N^{{\alpha^*}
-\varepsilon}\rfloor$, where $\varepsilon>0$ is such that ${\alpha^*}
+\varepsilon<\beta$, the same argument gives us that
\[
x_1^N\leq X_N(0) -\frac{\mu}{({\alpha^*}+1)\log N}
\frac{\log
(1+1/N^\varepsilon)}{1+{\alpha^*}+\log(1+1/N^\varepsilon)/\log N} \E (H_{x_1^N} ),
\]
and thus
%
\begin{equation}
\label{ineq1} \limsup_{N\to+\infty} \frac{\E(H_{x_1^N})}{N^{{\alpha
^*}+\varepsilon
}\log N} <+\infty.
\end{equation}
Similarly, if $x_2^N=\lfloor N^{\alpha^*}\rfloor+\lfloor N^{{\alpha^*}
-2\varepsilon}\rfloor$ and if we choose $X_N(0)=x_1^N$, the above
equation gives for $\tau=H_{x_2^N}$
\[
\bigl\lfloor N^{{\alpha^*}-2\varepsilon} \bigr\rfloor\leq\bigl\lfloor N^{{\alpha^*}
-\varepsilon}\bigr
\rfloor-\frac{\mu}{({\alpha^*}+1)\log N}\frac{\log
(1+1/N^{2\varepsilon})}{1+{\alpha^*}+\log(1+1/N^{2\varepsilon
})/\log
N} \E (H_{x_2^N} ),
\]
so that relation~(\ref{ineq1}) also holds for $H_{x_2^N}$. Setting
$x_i^N=N^{\alpha^*}+N^{{\alpha^*}-i\varepsilon}$, we can proceed by
induction until the smallest integer $i^*$ such that $i^*\varepsilon
>{\alpha^*}/2$. Finally, we obtain that as $N\rightarrow\infty$,
\[
\frac{\E(T_N)}{N^\beta(\log N)^2}=\frac{\E(H_{x_0^N})}{N^\beta
(\log N)^2}+ \sum_{i=1}^{i^*}
\frac{\E
(H_{x_i^N}-H_{x_{i-1}^N})}{N^\beta(\log N)^2} \rightarrow0. %
\]
We conclude by using the Markov inequality.

Up to now we have been dealing with the case $X_N(0)>N^{\alpha^*}$. There
remains to consider the case $X_N(0)< N^{\alpha^*}$. First,
Proposition~\ref{Job} shows that we can assume directly that
$X_N(0)=\lfloor xN^{\alpha^*}\rfloor$ for some $x\in(0,1)$. We can then
proceed as before by estimating $H_{N^{\alpha^*}-N^{{\alpha
^*}-\varepsilon
}}$ for $\varepsilon$ sufficiently small and by decreasing the
exponent by~$\varepsilon$ at each step until it falls below ${\alpha^*}
/2$. This completes the proof of Proposition~\ref{tajine}.
\end{pf}

\begin{rem*}
Proposition~\ref{tajine} completes\vspace*{1pt} the results of Section~\ref{ts1}.
Indeed, it shows in particular that if $L_1^N(0)=0$ and $L_2^N(0)=0$,
then the average hitting time $\E(T_N)$ of the neighborhood of
$N^{\alpha^*}$ is, up to a constant, upper bounded by $N^\beta$ for any
$\beta>{\alpha^*}$. With the same arguments as in the previous proof, it
can be shown in fact that there exists a constant $C>0$, such that
%
\begin{equation}
\label{Goisot} \E_{(0,N)}(T_N)\leq C_1
\frac{N^{\alpha^*}\log(N )^2\log\log
(N)}{\log
\log\log(N)}.
\end{equation}
\end{rem*}

\section{The fluid time scale}\label{ts3}
Recall that the fluid scaling of $(L(t))=(L_1(t),\break  L_2(t))$ consists in
speeding up the time scale of the Markov process proportionally to the
norm of its initial state and by scaling the state variable by the same
quantity. Hence, if $\llVert  L^N(0)\rrVert  =\max(L_1^N(0),L_2^N(0))=N$, we are
interested in the process
\[
\bigl(\overline{L}_N(t) \bigr)\stackrel{\mathrm{def.}} {=}
\frac
{1}{N} \bigl(L(Nt) \bigr).
\]
Without loss of generality, it can be assumed that
\[
\lim_{N\to+\infty} \overline{L}_N(0)=\lim
_{N\to+\infty} \biggl(\frac{L_1(0)}{N},\frac{L_2(0)}{N}
\biggr)=(x,1-x),
\]
for some $0\leq x\leq1$. See, for example, Bramson~\cite{Bramson} and
Robert~\cite{Robert}.

The initial fluid state considered up to now in Propositions~\ref
{Fluid} and~\ref{tajine} corresponds to the case $x=0$,
\[
\lim_{N\to+\infty} \overline{L}_N(0)=(0,1).
\]
It has been shown in Proposition~\ref{tajine} of Section~\ref{ts2}
that in this setting the hitting time of $N^{\alpha^*}$ by $(L_1(t))$ is
negligible compared to $N$. Consequently, on the fluid time scale,
$L_1(t)$ is immediately of the order of $L_2^{\alpha^*}$.

The following proposition completes this result. It shows that for any
initial fluid state, then the first time $L_1$ is close to $L_2^{\alpha^*}
$ is of the order of $N$ and, therefore, that this event occurs on the
fluid time scale.

\begin{proposition}\label{Jac}
Suppose that $\rho_1<1/2$, $\rho_2>1/2$, and that
$(L^N_1(t),\break L^N_2(t))$ is the solution to the SDE~(\ref{SDE}) with
initial conditions $L_2^N(0)=N$ and $L_1^N(0)$ such that
\[
\lim_{N\to+\infty}{L_1^N(0)}/{N}=x \in\R_+.
\]
Let $T_N$ be defined by
\begin{eqnarray*}
T_N &=&\inf \Bigl\{ s>0\dvtx  L_1^N(s)\in
L_2^N(s)^{\alpha^*}
\nonumber\\[-8pt]\\[-8pt]\nonumber
&&\hspace*{15pt}{} + \Bigl[-\sqrt
{L_2^N(s)^{\alpha^*}\log L_2^N(s)},
\sqrt {L_2^N(s)^{\alpha^*}\log
L_2^N(s)} \Bigr] \Bigr\}.
\end{eqnarray*}
Then, for the convergence in distribution we have
\[
\lim_{N\to+\infty} \frac{T_N}{N}=t_0(x)\stackrel{
\mathit{def.}} {=}\frac{2x}{\mu_1-2\lambda_1}.
\]
\end{proposition}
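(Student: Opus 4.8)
The plan is to run a fluid analysis on the time scale $t\mapsto Nt$ and to exploit the fact that, as long as both coordinates are of order $N$, the logarithmic weights of \eqref{W} are asymptotically flat. Indeed, for $a,b$ ranging over a fixed compact subset of $(0,\infty)$ one has $\log(1+aN)=\log N+\log a+o(1)$, so that
\[
W_1(aN,bN)=\frac{\log N+\log a+o(1)}{2\log N+\log(ab)+o(1)}\longrightarrow \frac12
\]
uniformly as $N\to\infty$, and likewise $W_2(aN,bN)\to 1/2$. Writing $\overline{L}^N_i(t)=L^N_i(Nt)/N$ and integrating \eqref{SDE} at time $Nt$, we get
\[
\overline{L}^N_i(t)=\overline{L}^N_i(0)+\frac{M_i(Nt)}{N}+\lambda_i t-\mu_i\int_0^t W_i\big(L_1^N(Ns),L_2^N(Ns)\big)\,\diff s,\quad i=1,2,
\]
where $(M_i(t))$ is the compensated martingale of the Poisson integrals in \eqref{SDE}, whose increasing process is $O(Nt)$, so that the second term vanishes in probability. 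Standard tightness then shows that, on any interval where both rescaled coordinates stay bounded away from $0$ and $\infty$, every limit point $(\ell_1,\ell_2)$ solves $\dot\ell_1=\lambda_1-\mu_1/2$ and $\dot\ell_2=\lambda_2-\mu_2/2$. With $(\ell_1(0),\ell_2(0))=(x,1)$ this gives $\ell_1(t)=x-(\mu_1-2\lambda_1)t/2$ and $\ell_2(t)=1+(\lambda_2-\mu_2/2)t$; since $\rho_1<1/2$ and $\rho_2>1/2$, the first is strictly decreasing and vanishes precisely at $t_0(x)$, while the second is increasing and bounded below by $1$ on $[0,t_0(x)]$. I assume $x>0$, the case $x=0$ being covered by Proposition~\ref{tajine}.

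For the lower bound, fix $\delta\in(0,x)$ and let $H^N_\delta=\inf\{t\ge 0:L_1^N(t)\le \delta N\}$. Because $\ell_1$ crosses the level $\delta$ transversally at $t_\delta=2(x-\delta)/(\mu_1-2\lambda_1)$ while $\ell_2$ stays bounded away from $0$ and $\infty$ on $[0,t_\delta]$, the fluid limit gives $H^N_\delta/N\to t_\delta$ in distribution. On $[0,H^N_\delta]$ the second coordinate is of order $N$, so the target band $L_2^N(s)^\alpetu\pm\sqrt{L_2^N(s)^\alpetu\log L_2^N(s)}$ is contained in $[0,CN^\alpetu]$ for some constant $C$, hence below $\delta N$ for $N$ large; as $L_1^N(s)\ge \delta N$ there, the process cannot meet the band before $H^N_\delta$. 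Thus $T_N\ge H^N_\delta$, so $\liminf_N T_N/N\ge t_\delta$ in probability, and letting $\delta\downarrow 0$ yields $\liminf_N T_N/N\ge t_0(x)$.

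For the upper bound I split the descent at the levels $\delta N>\epsilon N>N^\beta$, with $0<\epsilon<\delta$ fixed and $\beta\in(\alpetu,1)$ fixed, writing $T_N\le H^N_\delta+A_N+B_N+C_N$ for the durations of the successive legs $\delta N\to\epsilon N$, $\epsilon N\to N^\beta$ and $N^\beta\to$ band. The same fluid argument gives $A_N/N\to 2(\delta-\epsilon)/(\mu_1-2\lambda_1)$. For $B_N$, while $L_1^N\in[N^\beta,\epsilon N]$ and $L_2^N=\Theta(N)$ one has $\log L_1^N\ge \beta\log N$, so the downward drift $\mu_1W_1-\lambda_1$ is at least $\kappa_\beta:=\mu_1\beta/(\beta+1)-\lambda_1>0$ up to an $o(1)$ term (positivity holds because $\beta/(\beta+1)>\alpetu/(\alpetu+1)=\rho_1$); applying Doob's optional stopping theorem to the representation \eqref{eqXint} of the upper coupling process $X_N^\gamma\ge L_1^N$ (valid on $\{L_2^N\in[\eta N,\gamma N]\}$, cf. the proof of Proposition~\ref{initprop}) bounds the mean of $B_N$ by $\epsilon N/\kappa_\beta$, so $B_N/N=O(\epsilon)$ with high probability. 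Finally, once $L_1^N$ reaches level $N^\beta$, Proposition~\ref{tajine} applied to the coupled saturated process gives $C_N\le N^\beta(\log N)^2=o(N)$ with probability tending to $1$. Collecting these estimates, $\limsup_N T_N/N\le 2(x-\epsilon)/(\mu_1-2\lambda_1)+\epsilon/\kappa_\beta$ in probability, and letting $\epsilon\downarrow 0$ yields $\limsup_N T_N/N\le t_0(x)$, which with the lower bound proves the claim.

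The delicate point is the crossing of scales between the macroscopic regime, where $L_1^N=\Theta(N)$ and $W_1\approx 1/2$, and the equilibrium band at scale $N^\alpetu$: the fluid scaling cannot resolve values below $N$, and along the descent $W_1$ degrades continuously from $1/2$ down to $\alpetu/(\alpetu+1)$, so no single drift governs the last legs. What makes the argument work is that the descent distance is dominated by the macroscopic part, so the crude drift bound on $B_N$ contributes only $O(\epsilon)$, while the genuinely slow relaxation into the band is exactly what Proposition~\ref{tajine} controls, in time $o(N)$. A second point I would check first is that $L_2^N$ stays of order $N$ throughout $[0,T_N]$: on $[0,H^N_\delta]$ this follows from the fluid limit, and the remaining legs have total duration $O(\delta N)$ on the original time scale, during which $L_2^N$ (whose drift is bounded) changes by at most $O(\delta N)$, so that for $\delta$ small $L_2^N$ remains bounded away from $0$ and $\infty$ and both the couplings with $X_N^\gamma$ and the bound on the target band remain valid.
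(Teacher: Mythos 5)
Your argument is correct, and it follows the same three-stage skeleton as the paper's proof --- a fluid descent at speed $\mu_1/2-\lambda_1$ while both coordinates are of order $N$, a fast intermediate drift leg, and Proposition~\ref{tajine} for the final relaxation into the band --- but the decomposition of the descent is handled differently, in a way worth comparing. The paper makes a single cut at the level $N/(\log N)^2$: since $\log(N/(\log N)^2)\sim\log N$, the equal-capacity regime $W_1,W_2\approx 1/2$ persists all the way down to that level (this is the content of the inequalities~\eqref{ineq2}, combined with the observation that $\rho_2>1/2$ forces $L_2^N$ to stay above $N-\log N$ on that stretch), so the hitting time $\tau_0^N$ of $N/(\log N)^2$ satisfies $\tau_0^N/N\to t_0(x)$ exactly; the two remaining legs --- the drift estimate~\eqref{Dyn} from $N/(\log N)^2$ down to $2N^\alpetu$, which costs $O(N/\log N)$ in expectation because the drift there is of order $1/\log N$ but the distance is only $N/(\log N)^2$, and then Proposition~\ref{tajine} --- are each $o(N)$ outright. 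Hence no $\eps$-bookkeeping and no separate lower bound are needed: the monotone decomposition $T_N=\tau_0^N+(\tau_1^N-\tau_0^N)+(T_N-\tau_1^N)$ and Slutsky's lemma finish the proof. You instead stop the fluid limit at the classical compact boundary $\eps N$, bridge $[N^\beta,\eps N]$ with the constant-order drift bound $\mu_1\beta/(\beta+1)-\lambda_1>0$ (a genuinely different and rather clean observation: at level $N^\beta$ with $\beta>\alpetu$ fixed, the drift is of constant order, not $O(1/\log N)$), and remove the resulting $O(\eps)$ contribution by letting $\eps\downarrow 0$, with a matching $\liminf$ argument via the level $\delta N$. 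Both routes are sound; the paper's polylog cutoff buys an exact limit for the macroscopic leg in one shot, while yours stays entirely within textbook fluid-limit territory at the price of the two-sided limiting argument. Note also that you share with the paper the same mild elision when invoking Proposition~\ref{tajine}, which as stated targets the fixed band around $N^\alpetu$ with $L_2^N(0)=N$, whereas $T_N$ targets the moving band around $(L_2^N(s))^\alpetu$ with $L_2^N\sim cN$ at the restart; both treatments gloss this with the same restart-and-coupling remark, so it is not a gap relative to the paper.

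Two cosmetic slips: the target band is contained in $\left[0,CN^\alpetu\sqrt{\log N}\right]$ rather than $[0,CN^\alpetu]$, since its half-width is $\sqrt{(L_2^N)^\alpetu\log L_2^N}$ --- immaterial, as $\alpetu<1$ still puts it below $\delta N$. And your deferral of the case $x=0$ to Proposition~\ref{tajine} is slightly off when $L_1^N(0)$ lies between $N^\beta$ and $o(N)$, where that proposition does not apply directly; your own legs $B_N$ and $C_N$ already cover this range, so the correct citation is to them.
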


\begin{pf}
To start with, note that for both $i\in\{1,2\}$ and all $t\geq0$,
$L_i(t)\leq L_i(0)+{\mathcal N}_{\lambda_i}([0,t])$. Hence, by the
law of large numbers for Poisson processes, for every $\eta>0$ and
$K>0$ there exists $N_0\in\N$, such that with probability greater
than $1-\eta$, the relations
\[
L_1(Nt)\leq(1 +2\lambda_1K) N \quad\mbox{and}\quad
L_2(Nt)\leq(1 + 2\lambda_2K) N
\]
hold for all $t\leq K$ and $N\geq N_0$.

Let us now define $\tau_0^N = \inf\{s\geq0\dvtx  L_1^N(s)\leq N/(\log
N)^2\}$. Of course, this time is $0$ when $L_1^N(0)\leq N/(\log N)^2$.
By the remark made in the previous paragraph, with probability at least
$1-\eta$ we have for every $s\leq\tau_0^N\wedge(KN)$
%
\begin{equation}
\label{ineq2} \frac{\log N-2 \log\log N}{\log(1+2\lambda_2K)-2\log\log N +2\log
N}\leq\frac{\log L_1^N(s)}{\log L_1^N(s)+\log L_2^N(s)}
\end{equation}
and
\[
\frac{\log L_2^N(s)}{\log L_1^N(s)+\log L_2^N(s)} \leq\frac{\log
(1+2\lambda_2K)+\log N}{\log(1+2\lambda_2K)-2 \log\log N +2\log N}.
\]
Hence, on this time interval the service capacity offered to class $1$
jobs (resp., class~$2$ jobs) is at least (resp., at most) $1/2$ in the
limit. Since $\rho_2>1/2$, this implies that the process $(L_2^N(s))$
is increasing on the time interval $[0,\tau_0^N\wedge(KN)]$. That is,
for $N$ sufficiently large we have with probability greater than
$1-\eta$
\[
\inf_{s\leq\tau_0^N\wedge(KN)} L_2^N(s) \geq N- \log N.
\]
Consequently, relations~(\ref{ineq2}) can be completed by the inequality
\[
\frac{\log L_1^N(s)}{\log L_1^N(s)+\log L_2^N(s)} \leq\frac{\log(x+2\lambda_1K)+\log N}{\log(x+2\lambda_1K)+\log
(1-(\log N)/N)+2\log N}.
\]
This shows that, with high probability, as $N$ gets large the two
queues receive the capacity $1/2$. As a straightforward consequence, we
have the convergence in distribution
\[
\lim_{N\rightarrow\infty} \frac{\tau_0^N}{N}= \frac{2x}{\mu
_1-2\lambda_1}=t_0(x).
\]

Next, let us suppose that $L_1^N(0)\leq\lfloor N/(\log N)^2\rfloor$
and let us define
\[
\tau_1^N=\inf \bigl\{s>0\dvtx  L_1^N(s)
\leq2 N^{\alpha^*} \bigr\}. %
\]
Since $L_2^N(Nt)$ grows at linear rate (recall that $\rho_2>1/2$), we
can again compare $L_1^N$ to $(X_N(t))$ and conclude from
relation~(\ref{Dyn}) applied to the stopping time $\tau_1^N$ that
\[
\frac{\mu_1}{({\alpha^*}+1)\log N}\frac{\log(2)}{1+{\alpha
^*}+\log
(2)/\log N} \E \bigl(\tau_1^N
\bigr) \leq L_1^N(0).
\]
Consequently,
\[
\lim_{N\to+\infty} \E \bigl(\tau_1^N
\bigr)/N=0.
\]
We can thus assume that $L_1^N(0)=2\lfloor N^{\alpha^*}\rfloor$, and
Proposition~\ref{tajine} shows that in this case,
\[
\lim_{N\to\infty} \E(T_N)/N = 0. %
\]
Coming back to the initial question (with an arbitrary $x$) and using
the strong Markov\vspace*{1pt} property of $L^N$ combined with the last two limits,
we obtain that $T_N=\tau_0^N+(\tau_1^N-\tau_0^N)+(T_N-\tau_1^N)$, where
\[
\frac{\tau_0^N}{N}\stackrel{(d)} {\rightarrow} t_0(x),\qquad
\frac
{\tau_1^N-\tau_0^N}{N}\stackrel{(d)} {\rightarrow} 0\quad\mbox{and}\quad
\frac{T_N-\tau_1^N}{N}\stackrel{(d)} {\rightarrow} 0 %
\]
as $N\rightarrow\infty$. By Slutzky's lemma, we can conclude that
$T_N\rightarrow t_0(x)$ in law.
\end{pf}

The following theorem is a key result in the analysis of the fluid
limits of this system. It states that if $\rho_1<1/2$ and $L_1(0)$ is
of the order of $L_2(0)^{\alpha^*}$, there is nonempty time interval
\emph{on the fluid time scale} on which the relation $L_1\sim L_2^{\alpha
^*}$ holds.

\begin{theorem}\label{lem1}
Suppose that $\rho_1<1/2$, $\rho_1+\rho_2<1$ and let $\kappa>0$.
Then there exists $\eta_0>0$ such that for any sequence $(l_1^N)$ satisfying
\[
\limsup_{N\to+\infty} \biggl\llvert \frac{l_1^N}{N^{\alpha^*}}-1\biggr\rrvert
\leq \kappa,
\]
if $(L_1^N(t),L_2^N(t))$ is the solution to the stochastic differential
equation~(\ref{SDE}) with initial condition
$(L_1(0),L_2(0))=(l_1^N,N)$, then
%
\begin{equation}
\label{conc1} \lim_{N\to+\infty}\P \biggl(\sup_{0\leq s\leq\eta_0N }
\biggl\llvert \frac{L_1^N(s)}{(L_2^N(s))^{\alpha^*}} -1\biggr\rrvert > \kappa \biggr)=0.
\end{equation}
\end{theorem}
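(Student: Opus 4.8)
The plan is to reduce the two-dimensional statement to a confinement estimate for a one-dimensional saturated birth-and-death process, and then to push the excursion-counting argument of Proposition~\ref{Fluid} from the time scale $N^\alpetu\log N$ up to the fluid time scale $\eta_0 N$. The heuristic is that $L_1^N$ relaxes to a neighborhood of its target $(L_2^N)^\alpetu$ on the time scale $N^\alpetu\log N$, which is $o(N)$ since $\alpetu<1$, whereas $L_2^N$ moves only by a small multiplicative factor over $[0,\eta_0 N]$. The strong restoring drift (the Ornstein-Uhlenbeck mechanism of Theorem~\ref{theostab}) must be used in place of any diffusive bound, because over a time of order $N$ the free fluctuations of $L_1^N$ would be of order $\sqrt N$, which exceeds $N^\alpetu$ when $\alpetu<1/2$.

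By monotonicity in $\kappa$ it is enough to treat small $\kappa$, and I would work with a \emph{fixed} band. It suffices to prove that, with high probability, $L_1^N(s)$ stays in the fixed interval $[(1-\kappa/3)N^\alpetu,(1+\kappa/3)N^\alpetu]$ and $L_2^N(s)$ stays in $[\eta N,\gamma N]$ for constants $\eta,\gamma$ tending to $1$ as $\eta_0\to 0$, for all $s\le \eta_0 N$. Indeed, on this event $(L_2^N(s))^\alpetu$ also lies in $[(1-\kappa/3)N^\alpetu,(1+\kappa/3)N^\alpetu]$, so the ratio $L_1^N(s)/(L_2^N(s))^\alpetu$ lies in $[(1-\kappa/3)/(1+\kappa/3),(1+\kappa/3)/(1-\kappa/3)]\subset(1-\kappa,1+\kappa)$. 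The two confinements are coupled, so I would run them up to the stopping time $\tau$ at which either $L_1^N$ leaves its band or $L_2^N$ leaves $[\eta N,\gamma N]$, and bound $\P(\tau\le\eta_0 N)$ by the sum of the two failure probabilities. On $[0,\tau]$ the monotone coupling used in Proposition~\ref{initprop} sandwiches $L_1^N$ between the saturated processes $X_N^\eta$ and $X_N^\gamma$ of~\eqref{eqX}, whose fixed targets $\eta^\alpetu N^\alpetu$ and $\gamma^\alpetu N^\alpetu$ lie in the band for $\eta_0$ small; this removes the moving target from the one-dimensional analysis.

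The core estimate is then the confinement of each saturated birth-and-death process to a fixed band of multiplicative width $1\pm c\kappa$ about $N^\alpetu$ on $[0,\eta_0 N]$, for which I reuse the mechanism of Proposition~\ref{Fluid}. Using the exact identity~\eqref{eqelem}, on the band the departure rate exceeds $\lambda$ above the target and falls below it below the target, by an amount of order $1/\log N$ (the multiplicative width makes $\log(1+x)$ vary by $O(1)$, whence the $1/\log N$ restoring excess). Near the upper edge I would couple with a reflected random walk of negative drift of this order, and Kingman's inequality (Kingman~\cite{Kingman}, Relation~(3.3) of Theorem~3.5 in Robert~\cite{Robert}) bounds the probability that a single excursion climbs the distance of order $\kappa N^\alpetu$ to the outer edge by $\exp(-cN^\alpetu/\log N)$; the lower edge is symmetric, with an upward-drifting reflected walk, and stays non-degenerate because the whole band sits at scale $N^\alpetu$, away from the origin. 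A Chernoff bound on the i.i.d.\ exponential inter-excursion times shows there are at most $O(\eta_0 N)$ excursions before time $\eta_0 N$ with overwhelming probability, so a union bound gives a total exit probability at most of order $\eta_0 N\exp(-cN^\alpetu/\log N)$.

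The remaining point, and the \textbf{main obstacle}, is that this union bound over an interval of length of order $N$ must still vanish: the argument works precisely because $N\exp(-cN^\alpetu/\log N)\to 0$, i.e.\ $N^\alpetu/\log N\gg\log N$, which holds for every $\alpetu>0$. This is the quantitative heart of the separation of time scales and the reason the confining (rather than diffusive) estimate is indispensable. Finally I would close the bootstrap: on $[0,\tau]$, when $L_1^N$ is of order $N^\alpetu=o(N)$ the class-$2$ node receives capacity $1+o(1)$, so $L_2^N$ has net drift $\lambda_2-\mu_2(1+o(1))$ and, by the law of large numbers for the driving Poisson processes, changes by at most $O(\eta_0 N)$; choosing $\eta_0$ small keeps $L_2^N$ in $[\eta N,\gamma N]$ with $\eta,\gamma$ as close to $1$ as desired and with failure probability $o(1)$. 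Combining the two small failure probabilities yields $\P(\tau\le\eta_0 N)\to 0$, which is exactly~\eqref{conc1}.
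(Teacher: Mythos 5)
Your core machinery is the right one and is essentially the paper's: comparison with a saturated one-dimensional birth-and-death process, the exact identity~\eqref{eqelem} giving a restoring drift of order $1/\log N$ at a fixed multiplicative distance from $N^\alpetu$, Kingman's inequality bounding by $\exp(-cN^\alpetu/\log N)$ the probability that a single excursion of the dominating reflected walk climbs a height of order $N^\alpetu$, a Chernoff count of $O(N)$ excursions on $[0,\eta_0 N]$, and the observation that $N\exp(-cN^\alpetu/\log N)\to 0$ is the quantitative crux. There is, however, a genuine gap at your very first reduction. The hypothesis allows $l_1^N$ anywhere in $[(1-\kappa)N^\alpetu,(1+\kappa)N^\alpetu]$, so the event ``$L_1^N$ stays in $[(1-\kappa/3)N^\alpetu,(1+\kappa/3)N^\alpetu]$ on $[0,\eta_0N]$'' is already violated at time $0$ for admissible sequences near the edge, and its probability certainly does not tend to $1$. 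Your claim that ``by monotonicity in $\kappa$ it is enough to treat small $\kappa$'' does not repair this: the statement is not monotone in $\kappa$, since the initial band in the hypothesis and the confinement band in the conclusion both scale with $\kappa$, so neither the small-$\kappa$ case nor the large-$\kappa$ case implies the other. What is missing is an entrance (descent) phase: starting from ratio close to $1+\kappa$, you must show that $L_1^N$ reaches an interior level in time $o(N)$ \emph{without the ratio ever exceeding} $1+\kappa$ in the meantime. This is exactly the paper's first step: it introduces thresholds $1<\delta_2<\delta_1<\delta=1+\kappa$ calibrated against the decrease of $L_2^N$ (the choice of $\eta_0$ with $\delta(1+2\mu_2(\rho_2-1)\eta_0)^\alpetu>1$), couples $L_1^N$ from above with the \emph{unreflected} negative-drift walk $X_{\delta_2}$ as in~\eqref{irancy}, uses Kingman's bound~\eqref{joblot} to show the overshoot $\sup_s X_{\delta_2}(s)$ is $o(N^\alpetu)$, and a drift/stopping-time estimate to get $T_N=O(N^\alpetu\log N)=o(N/\log N)$; only then does it run the reflected-walk excursion argument from the interior level $\delta_2N^\alpetu$, with the gap $\delta_1-\delta_2$ as the height the excursions must climb. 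Without this step your stopping-time bootstrap never gets started for initial data in the annulus between deviation $\kappa/3$ and $\kappa$.

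A secondary, factual slip: ``the class-$2$ node receives capacity $1+o(1)$'' is wrong. With $L_1\asymp N^\alpetu$ and $L_2\asymp N$ the logarithmic weights are $\asymp\alpetu\log N$ and $\asymp\log N$, so class~$2$ receives the fraction $1/(1+\alpetu)=1-\rho_1$ of the capacity and its net drift is $\lambda_2-\mu_2(1-\rho_1)=\mu_2(\rho_1+\rho_2-1)$, precisely the slope of $\gamma(t)$ in Theorem~\ref{theoprov}; the non-negligible share $\rho_1$ kept by the small queue is the whole point of the $\log$ policy. The slip happens to be harmless for your confinement of $L_2^N$ in $[\eta N,\gamma N]$, since crude jump-rate bounds ($\lambda_2$ up, $\mu_2$ down, as in the paper's priority-queue couplings $\overline{Q}_2^N$ and $\underline{Q_2^N}$) already give that, but the correct capacity split is exactly the quantity against which $\eta_0$ must be calibrated in the paper's proof, so you should fix it before writing the argument out in full.
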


\begin{pf}
The main argument consists in controlling the upward jumps of $L_1$ on
sufficiently small fluid time scale intervals by an ergodic reflected
random walk. It gives the excursions of $L_1$ on the interval are upper
bounded by the excursions of the reflected random walk and, therefore,
cannot be too large by Kingman's inequality. The same argument applies
to the downward jumps. Since $L_1$ remains close to $L_2^{\alpha^*}$ at
the end of the time interval it gives finally the result.

Let us write $a_0=1+\kappa$, and let us prove that there exists $\eta
_0>0$ such that
%
\begin{equation}
\label{conc} \lim_{N\to+\infty}\P \biggl(\sup_{0\leq s\leq\eta_0 N}
\frac
{L_1^N(s)}{(L_2^N(s))^{\alpha^*}} > a_0 \biggr)=0.
\end{equation}
The other inequality can then be shown by using the same technique, and
so we omit the details.

The process $(L_2^N(s))$ is stochastically bounded from above by
$(\overline{Q}_2^N(s))$ describing the number of class~$2$ jobs in the
priority system where class $1$ jobs have the priority of service, that is,
queue~$2$ is served only when queue~$1$ is empty. For this system, it
is not difficult to show that the convergence in distribution
\[
\lim_{N\to+\infty} \biggl(\frac{\overline{Q}_2^N(Ns)}{N}, s<1 \biggr)=\bigl(1+
\mu_2(\rho_1+\rho_2-1)s, s<1\bigr)
\]
holds. Similarly, the process $(L_2^N(s))$ is stochastically bounded
from below by $(\underline{Q_2^N}(s))$ describing the number of
class~$2$ jobs when they have priority, and one has the corresponding
convergence in distribution
\[
\lim_{N\to+\infty} \biggl(\frac{\underline{Q_2^N}(Ns)}{N}, s<1 \biggr)=\bigl(1+
\mu_2(\rho_2-1)s, s<1\bigr).
\]
Fix $0<\eta_0<1$ such that
\[
a_0 \bigl(1+2\mu_2(\rho_2-1)
\eta_0 \bigr)^{\alpha^*}> 1.
\]
Let also ${\mathcal E}$ be the event defined by
\[
{\mathcal E}= \biggl\{\sup_{0\leq s\leq\eta_0 N}\frac
{L_2^N(s)}{N}\leq
\frac{3}{2},  \inf_{0\leq s\leq\eta_0N}\frac{L_2^N(s)}{N}\geq1+
\frac{3}{2}\mu _2(\rho_2-1)\eta_0
\biggr\}.
\]
The above convergence in distribution results show in particular that,
for any $\varepsilon>0$, there exists $N_0$ such that for every
$N\geq N_0$,
\begin{eqnarray*}
\P \bigl({\mathcal E}^c \bigr)&\leq&\P \biggl(\sup
_{0\leq s\leq\eta
_0 N}\frac{\overline{Q}_2^N(s)}{N}> \frac{3}{2} \biggr)
\\
&&{}+ \P \biggl(\inf_{0\leq s\leq\eta_0 N}\frac{\underline{Q_2^N}(s)}{N}< 1+
\frac
{3}{2}\mu_2(\rho_2-1)\eta_0
\biggr)
\\
&\leq&\varepsilon.
\end{eqnarray*}
By relation~(\ref{eqelem}), the service rate of queue~$1$ at time $s$
is given by
\[
\Delta(s)\stackrel{\mathrm{def.}} {=}\lambda_1+\mu_1
\frac{\log
[(1+L_1^N(s))/(L_2^N(s))^{\alpha^*}]}{({\alpha^*}+1)(({\alpha
^*}+1)\log
(L_2^N(s))+\log[(1+L_1^N(s))/(L_2^N(s))^{\alpha^*}])}.
\]
If for some $y>1$ and some $s<1$ $L_1^N(s)\geq y (L_2^N(s))^{\alpha
^*}$, then
\[
\Delta(s)\geq\lambda_1+\mu_1 \frac{\log(y)}{({\alpha
^*}+1)(({\alpha^*}
+1)\log(L_2^N(s))+\log(y))}.
\]
Furthermore,
%
\begin{eqnarray}\label{ineqq}
\Delta(s) &\geq&\mu_N(y)
\nonumber\\[-8pt]\\[-8pt]\nonumber
&\stackrel{\mathrm{def.}} {=}&
\lambda_1+\mu_1 \frac{\log(y)}{({\alpha^*}+1)(({\alpha^*}+1)(\log(3/2)+\log
(N))+\log(y))}
\end{eqnarray}
holds on the event ${\mathcal E}$.

Denote by $(X_y(u))$ the birth and death process on $\Z$ whose $+1$
(resp., $-1$) jumps have rate $\lambda_1$ [resp., $\mu_N(y)$] and
starting at $0$.

Define
\[
a_1=a_0 \biggl(1+\frac{3}{2}\mu_2(
\rho_2-1)\eta_0 \biggr)^{\alpha
^*}\quad \mbox{and}\quad a_2=\frac{1+a_1}{2}.
\]
Since\vspace*{1pt} $\rho_2<1$, the definition of $\eta_0$ gives that $a_1>1$ and,
therefore, $a_2>1$. Note that $a_0>a_2$. Suppose first that
$l_1^N/N^{\alpha^*}\rightarrow a_0$.

A simple coupling argument gives that the processes $(L_1^N(s))$ and
$(X_{a_2}(s))$ can be constructed so that, on the event ${\mathcal
E}$, the relation
%
\begin{equation}
\label{irancy} L_1^N(s)\leq l_1^N+X_{a_2}(s)
\end{equation}
holds for every $s\leq\inf\{u\dvtx  L_1^N(u)/(L_2^N(u))^{\alpha^*}\leq
a_2\}$.

For every $x\geq0$, Kingman's inequality (see again Kingman~\cite
{Kingman} or relation~(3.3) of Theorem~3.5 in Robert~\cite{Robert})
gives us the estimate
%
\begin{equation}
\label{joblot} \P \Bigl(\sup_{s\geq0} X_{a_2}(s)\geq x
\Bigr)\leq\exp \bigl(-x \bigl(\mu_N(a_2)-
\lambda_1\bigr) \bigr).
\end{equation}
In particular, the random variable
\[
\frac{1}{N^{\alpha^*}} \sup_{s\geq0} X_{a_2}(s)
\]
converges in distribution to $0$ since
\[
\mu_N(a_2)\sim\lambda_1+
\frac{\mu_1\log(a_2)}{({\alpha
^*}+1)^2\log N},
\]
as $N$ goes to infinity. Let
\[
T_N=\inf \bigl\{s>0\dvtx  l_1^N+X_{a_2}(s)
\leq a_2N^{\alpha^*} \bigr\}.
\]
In a similar way as in the proof of Proposition~\ref{tajine}, for
example, a simple drift analysis shows that
\[
\E(T_N)\leq\frac{({\alpha^*}+1)(({\alpha^*}+1)(\log(3/2)+\log
(N))+\log
(a_2))}{\mu_1\log(a_2)} l_1^N,
\]
and consequently
\[
\lim_{N\to+\infty}\P\bigl(T_N>N/(\log N)\bigr)=0.
\]
From relation~(\ref{irancy}), one gets that with probability tending
to $1$, $(L_1^N(t)/N^{\alpha^*})$ does not grow above
$(l_1^N(t)/N^{\alpha^*})$ until the time $T_N$, which itself occurs much
before~$N$. As a consequence, it is enough to prove identity~(\ref{conc}) with the assumption that $L_2^N(0)=N$ and
\[
\lim_{N\to+\infty} \frac{L_1^N(0)}{N^{\alpha^*}}=a_2.
\]
The reflected process of the birth and death process $(X_y(u))$ is
denoted by $(X_y^+(u))$. This is in fact an $M/M/1$ queue with input
rate $\lambda_1$ and service rate $\mu_N(y)$. As before, with
inequality~(\ref{ineqq}), one can construct a coupling such that the relation
%
\begin{equation}
\label{xqq} L_1^N(s)\leq L_1^N(0)+X^+_{a_2}(s),
\end{equation}
holds for every $s\leq\eta_0N$ on the event ${\mathcal E}$.

For every $y>0$, let us define
\[
\tau_y=\inf \bigl\{s\geq0\dvtx  X^+_{a_2}(s)\geq y \bigr\}
\quad\mbox{and}\quad H=\inf \biggl\{s\geq0\dvtx  \frac{L_{1}^N(s)}{(L_2^N(s))^{\alpha^*}}> a_0
\biggr\}.
\]
The proposition will be proved if one shows that $\P(H\leq\eta_0 N)$
converges to $0$ as~$N$ goes to infinity.

On the event ${\mathcal E}$, if $0\leq s\leq\eta_0 N$ is such that
$ L_1^N(s)\geq a_0 (L_2^N(s))^{\alpha^*}$ then
\[
L_1^N(s)\geq a_0 \bigl(1+
\tfrac{3}{2}\mu_2(\rho_2-1)\eta_0
\bigr)^{\alpha^*}N^{\alpha^*}=a_1 N^{\alpha^*}.
\]
Consequently, equation~(\ref{xqq}) and the definition of $a_2$ give
that for every $N\geq N_0$,
%
\begin{equation}
\label{yqq} \P(H\leq\eta_0N)\leq\varepsilon+\P (
\tau_{N^{\alpha^*}
(a_1-1)/2}\leq\eta_0N ).
\end{equation}
But using the same technique as in the first part of the proof of
Proposition~\ref{Fluid}, we can show that the\vspace*{1pt} probability that
$X^+_{a_2}$ reaches $N^{\alpha^*}(a_1-1)/2$ in one excursion away from
$0$ is so low, that the probability that it reaches this height during
one of the $\mathcal{O}(N)$ excursions it does in the interval
$[0,\eta_0 N]$ tends to $0$ as $N$ tends to infinity. Therefore, we
can conclude that
\[
\P(H\leq\eta_0N)\leq2\varepsilon, %
\]
for $N$ large enough. Since this property holds for every $\varepsilon
>0$, Theorem~\ref{lem1} is proved.
\end{pf}

\begin{corollary}\label{corol}
Under the assumptions of Theorem~\ref{lem1}, the convergence in distribution
\[
\lim_{N\to+\infty} \biggl(\frac{L_2^N(Nt)}{N}, 0\leq t<
t_0 \biggr)= \bigl(\gamma(t), 0\leq t< t_0 \bigr)
\]
holds, with $t_0={1}/(\mu_2(1-\rho_1-\rho_2))$ and $\gamma
(t)=1+\mu_2(\rho_1+\rho_2-1)t$.

In addition, for every $t<t_0$ we have
%
\begin{equation}
\label{aqq} \lim_{N\to+\infty}\P \biggl(\sup_{0\leq s\leq t }
\biggl\llvert \frac
{L_1^N(Ns)}{(L_2^N(Ns))^{\alpha^*}} -1\biggr\rrvert > \kappa \biggr)=0.
\end{equation}
\end{corollary}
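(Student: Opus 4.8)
The plan is to derive the corollary by feeding the tracking estimate of Theorem~\ref{lem1} into the semimartingale decomposition of the second coordinate, and then to propagate it over the whole interval $[0,t]$ by iterating Theorem~\ref{lem1} over successive blocks of length of order $N$ via the strong Markov property. The starting observation is that once $L_1^N(Ns)$ is known to be of the order of $(L_2^N(Ns))^\alpetu$, the fraction of capacity received by class~$2$ is essentially constant. Indeed, on the event appearing in \eqref{aqq}, the definition \eqref{W} together with the expansion \eqref{eqelem} gives, uniformly on the relevant time interval,
\[
W_2(L_1^N(Ns),L_2^N(Ns))=\frac{\log(1+L_2^N(Ns))}{\log(1+L_1^N(Ns))+\log(1+L_2^N(Ns))}=\frac{1}{1+\alpetu}+O\!\left(\frac{1}{\log N}\right),
\]
since $\log(1+L_1^N)=\alpetu\log L_2^N+O(1)$ when $L_1^N\asymp (L_2^N)^\alpetu$ and $L_2^N\asymp N$. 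Because $1/(1+\alpetu)=1-\rho_1$ by the definition of $\alpetu$, the instantaneous drift of $L_2^N$ is $\lambda_2-\mu_2(1-\rho_1)=\mu_2(\rho_1+\rho_2-1)$, which is exactly the slope of $\gamma$.

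First I would make this rigorous on a single block. Integrating the SDE~\eqref{SDE} for $i=2$ and scaling gives
\[
\frac{L_2^N(Ns)}{N}=\frac{L_2^N(0)}{N}+\lambda_2 s-\mu_2\int_0^s W_2(L_1^N(Nu),L_2^N(Nu))\,\diff u+\frac{\widetilde{M}_N(Ns)}{N},
\]
where $(\widetilde{M}_N(t))$ is a martingale whose increasing process is of order $Ns$, so that the last term is negligible. On the tracking event from Theorem~\ref{lem1}, valid on $[0,\eta_0 N]$ with probability tending to one, the displayed estimate for $W_2$ shows that the integral converges to $(1-\rho_1)s$ uniformly on $[0,\eta_0]$. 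Since $L_2^N(0)/N=1$, this yields $L_2^N(Ns)/N\to\gamma(s)$ on $[0,\eta_0]$, and \eqref{aqq} on $[0,\eta_0]$ is precisely the content of \eqref{conc1} read on the fluid time scale.

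Next I would iterate. Fix $t<t_0$, so that $\underline{\gamma}\stackrel{\text{def.}}{=}\gamma(t)>0$; on $[0,t]$ the fluid value of $L_2^N$ stays above $\underline{\gamma}$, hence $L_2^N(Ns)$ remains of order $N$. At the end of the first block, at time $\eta_0 N$, the tracking estimate guarantees that $L_1^N(\eta_0 N)$ lies in the $\kappa$-neighborhood of $(L_2^N(\eta_0 N))^\alpetu$, while the first step shows that $L_2^N(\eta_0 N)$ concentrates around $\gamma(\eta_0)N$, hence is comparable to $N$. Applying the strong Markov property at the deterministic time $\eta_0 N$ and invoking Theorem~\ref{lem1} afresh, now with bulk value $N'\stackrel{\text{def.}}{=}L_2^N(\eta_0 N)$ in place of $N$ and the same constants $\kappa$ and $\eta_0$, extends the tracking and the linear behaviour of $L_2^N$ to the next block $[\eta_0 N,\eta_0 N+\eta_0 N']$. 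Since $\gamma$ is bounded below by $\underline{\gamma}$ on $[0,t]$, each block has length at least $\eta_0\underline{\gamma}N$, so a number of blocks bounded by $t/(\eta_0\underline{\gamma})$, independent of $N$, covers $[0,tN]$. The slope of $L_2^N$ is $\mu_2(\rho_1+\rho_2-1)$ on every block, so the piecewise affine limits patch into the single line $\gamma(s)=1+\mu_2(\rho_1+\rho_2-1)s$, and the tracking estimate \eqref{aqq} holds on all of $[0,t]$ after summing the finitely many error probabilities.

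The main obstacle is the restart step, because the bulk value $N'=L_2^N(\eta_0 N)$ and the restarted first coordinate are random. To handle this I would use that the probability bounds in the proof of Theorem~\ref{lem1} depend on the bulk only through its order of magnitude, so that they are uniform over bulk values in the compact range $[\underline{\gamma}N, N]$; conditioning on $\mathcal{F}_{\eta_0 N}$ and working on the good event where the restart state satisfies the hypotheses of Theorem~\ref{lem1} then lets one apply the theorem conditionally and accumulate only a finite number of small error terms. A minor additional point is the standard tightness and identification argument for $(L_2^N(N\cdot)/N)$: the modulus-of-continuity bound used in Propositions~\ref{Fluid} and~\ref{tajine} gives tightness, and the drift computation above identifies every limit point as $\gamma$, so that convergence in distribution follows.
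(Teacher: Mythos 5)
Your proposal is correct and follows essentially the same route as the paper: first establish the fluid behaviour of $L_2^N$ on an initial interval $[0,\eta_0]$ using the tracking estimate of Theorem~\ref{lem1}, then extend by induction, re-applying Theorem~\ref{lem1} with the state at time $\eta_0 N$ as a new initial condition for as long as $\gamma$ remains positive, with finitely many blocks covering $[0,t]$. The only cosmetic difference is in identifying the drift: the paper argues that since $L_1^N$ stays of order $N^\alpetu$ all class-1 arrivals are processed, so queue~2 receives capacity $1-\rho_1$ by work conservation, whereas you expand $W_2 = 1/(1+\alpetu)+O(1/\log N)$ directly from the tracking bound --- both yield the slope $\mu_2(\rho_1+\rho_2-1)$.
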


\begin{pf}
Let us first prove the convergence
%
\begin{equation}
\label{zqq} \lim_{N\to+\infty} \biggl(\frac{L_2^N(Nt)}{N}, 0\leq t
\leq\eta _0 \biggr)= \bigl(\gamma(t), 0\leq t\leq\eta_0
\bigr).
\end{equation}
The SDE~(\ref{SDE}) is used in the same way as before, and so we only
sketch the proof. By Theorem~\ref{lem1}, we have
\[
\lim_{N\to+\infty}\P \biggl(\sup_{0\leq s\leq\eta_0} \biggl
\llvert \frac
{L_1^N(Ns)}{(L_2^N(Ns))^{\alpha^*}}-1\biggr\rrvert \leq\kappa \biggr)=1.
\]
Thus, $L_1^N(Ns)$ is at most of the order of $N^{\alpha^*}$ with
arbitrarily large probability. This implies that for any $s\leq\eta
_0$, all the arrivals at queue~$1$ up to time $Ns$ are processed.
Hence, queue~$1$ uses the fraction $\rho_1$ of the capacity of the
server, and the remaining capacity is devoted to queue~$2$. The
convergence~(\ref{zqq}) is proved. Furthermore, from relation~(\ref
{conc1}) we obtain that
\[
\lim_{N\to+\infty}\P \biggl(\biggl\llvert \frac{L_1^N(N\eta
_0)}{(L_2^N(N\eta_0))^{\alpha^*}}-1\biggr
\rrvert \leq\kappa \biggr)=1,
\]
and $L_2^N(N\eta_0)\sim\gamma(\eta_0)N$. Consequently, Theorem~\ref
{lem1} applied with the initial condition $(L_1^N(N\eta_0),L_2^N(N\eta
_0))$ shows that the convergence~(\ref{zqq}) and relation~(\ref
{conc1}) can be extended to the interval $[0,\eta_0(1+\gamma(\eta
_0))]$. That is,
%
\begin{equation}
\label{conc2} \lim_{N\to+\infty}\P \biggl(\sup_{0\leq s\leq\eta_0(1+\gamma
(\eta_0)) }
\biggl\llvert \frac{L_1^N(Ns)}{(L_2^N(Ns))^{\alpha^*}} -1\biggr\rrvert > \kappa \biggr)=0.
\end{equation}
Proceeding by induction, as long as $\gamma(x)\neq0$ if the
convergence~(\ref{zqq}) and inequality~(\ref{conc2}) hold on $[0,x]$,
these relations can be extended to $[0,x+\gamma(x)\eta_0]$. The
corollary is proved.
\end{pf}
The following theorem is the main result of this section. 
Propositions \ref{Fluid}~and~\ref{tajine} show that if
$L_2^N(0)=N$ then quickly, on a time scale faster than $t\to N^\alpha
\log(N)^2\cdot t$, the first coordinate is very close to $N^{\alpha^*}$.
Since the component $L_2$ does not change much on this time scale, this
can be rephrased as follows: very quickly $L_1\sim L_2^{\alpha^*}$. This
theorem establishes this property on the fluid time scale: $L_2$ is
decreasing linearly on this time scale and $L_1$ adapts very quickly to
the new values of $L_2$ so that $L_1\sim L_2^{\alpha^*}$. This is of
course a much stronger result than Propositions~\ref{Fluid}~and~\ref{tajine}.

\begin{theorem}\label{theoprov}
Suppose that $\rho_1<1/2$ and $\rho_1+\rho_2<1$. If
$(L^N_1(t),L^N_2(t))$ is the solution to the SDE~(\ref{SDE}) with
$L_2^N(0)=N$ and
\[
\lim_{N\to+\infty}\frac{L_1^N(0)}{N^{\alpha^*}}=1,
\]
then we have for the convergence in distribution
\[
\lim_{N\to+\infty} \biggl( \biggl[\frac{L_1^N(Nt)}{N^{\alpha^*}},
\frac{L_2^N(Nt)}{N} \biggr], 0\leq t< t_0 \biggr)= \bigl(\bigl[
\gamma(t)^{\alpha^*},\gamma(t)\bigr], 0\leq t<
t_0 \bigr),
\]
where $t_0={1}/(\mu_2(1-\rho_1-\rho_2))$ and $\gamma(t)=1+\mu
_2(\rho_1+\rho_2-1)t$.
\end{theorem}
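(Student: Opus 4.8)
The plan is to obtain Theorem~\ref{theoprov} as an essentially immediate consequence of Corollary~\ref{corol}, which already carries all of the analytical weight. Indeed, on any compact subinterval $[0,t_1]\subset [0,t_0)$, Corollary~\ref{corol} provides two ingredients: first, the convergence in distribution $(L_2^N(Ns)/N)\to(\gamma(s))$; and second, for every $\kappa>0$, the fact that $\sup_{0\le s\le t_1}|L_1^N(Ns)/(L_2^N(Ns))^\alpetu-1|$ tends to $0$ in probability. Since both coordinate limits $\gamma(s)$ and $\gamma(s)^\alpetu$ are deterministic, convergence in distribution is equivalent to convergence in probability for the uniform topology on $[0,t_1]$, and joint convergence of the pair to the deterministic limit reduces to proving convergence of each coordinate separately. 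The second coordinate is handled directly by the first ingredient, so the only thing left is the first coordinate $L_1^N(Ns)/N^\alpetu$. Note also that $\gamma(s)\ge\gamma(t_1)>0$ on $[0,t_1]$, so the quantities below are well defined with probability tending to $1$.

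First I would write the elementary factorisation
\[
\frac{L_1^N(Ns)}{N^\alpetu}=\frac{L_1^N(Ns)}{(L_2^N(Ns))^\alpetu}\left(\frac{L_2^N(Ns)}{N}\right)^\alpetu,
\]
and set $A_s^N=L_1^N(Ns)/(L_2^N(Ns))^\alpetu$ and $B_s^N=(L_2^N(Ns)/N)^\alpetu$. Since $\alpetu\in(0,1)$, the map $x\mapsto x^\alpetu$ is continuous, so by continuous mapping the convergence $L_2^N(Ns)/N\to\gamma(s)$ yields $\sup_{0\le s\le t_1}|B_s^N-\gamma(s)^\alpetu|\to 0$ in probability; moreover $\gamma$ is bounded by $\gamma(0)=1$ on $[0,t_1]$, so $\sup_{0\le s\le t_1} B_s^N\le 1+o(1)$ with probability tending to $1$. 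By Corollary~\ref{corol}, $\sup_{0\le s\le t_1}|A_s^N-1|\le\kappa$ with probability tending to $1$, for $\kappa$ as small as we please.

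Then the decomposition $A_s^N B_s^N-\gamma(s)^\alpetu=(A_s^N-1)B_s^N+(B_s^N-\gamma(s)^\alpetu)$ gives, on the event $\{\sup_s|A_s^N-1|\le\kappa\}$,
\[
\sup_{0\le s\le t_1}\left|\frac{L_1^N(Ns)}{N^\alpetu}-\gamma(s)^\alpetu\right|\le \kappa\,\sup_{0\le s\le t_1}B_s^N+\sup_{0\le s\le t_1}\left|B_s^N-\gamma(s)^\alpetu\right|.
\]
Choosing $\kappa$ small and letting $N\to\infty$ shows that the right-hand side is arbitrarily small with probability tending to $1$, which is exactly the desired uniform convergence in probability of the first coordinate. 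Combined with the convergence of the second coordinate, a Slutsky-type argument (legitimate since the limit is deterministic) yields the joint convergence in distribution of the pair on $[0,t_1]$, and since $t_1<t_0$ is arbitrary, on all of $[0,t_0)$.

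The argument is short precisely because the substantial difficulties --- establishing the separation of time scales that pins $L_1^N$ to $(L_2^N)^\alpetu$ uniformly on the fluid time scale, and identifying the linear decay $\gamma$ of $L_2^N/N$ --- have already been resolved in Theorem~\ref{lem1} and Corollary~\ref{corol}. I expect no genuine obstacle beyond routine bookkeeping: the only points requiring a little care are the interchange between the uniform-in-$s$ control from Corollary~\ref{corol} and the pointwise continuous-mapping step, together with the observation that the deterministic nature of the limits turns convergence in distribution into convergence in probability and reduces joint convergence to marginal convergence.
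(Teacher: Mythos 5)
Your proposal is correct and follows essentially the same route as the paper's own (very brief) proof: both deduce from Corollary~\ref{corol} that the ratio process $\big(L_1^N(Ns)/(L_2^N(Ns))^\alpetu\big)$ converges to the constant $1$ uniformly on compact subintervals of $[0,t_0)$, and then conclude via the factorisation $L_1^N(Ns)/N^\alpetu=\big(L_1^N(Ns)/(L_2^N(Ns))^\alpetu\big)\big(L_2^N(Ns)/N\big)^\alpetu$ together with the convergence of $\big(L_2^N(Nt)/N\big)$ to $(\gamma(t))$. Your write-up simply makes explicit the routine steps (deterministic limits turning distributional into in-probability convergence, continuous mapping for $x\mapsto x^\alpetu$, and positivity of $\gamma$ on $[0,t_1]$) that the paper leaves implicit.
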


\begin{pf}
From Corollary~\ref{corol}, we obtain that relation~(\ref{aqq}) holds
for any $\kappa>0$. Hence, the process
\[
\biggl(\frac{L_1^N(Ns)}{(L_2^N(Ns))^{\alpha^*}}, 0\leq s<t_0 \biggr)
\]
converges\vspace*{1pt} in distribution to the process constant equal to $1$ on
$[0,t_0)$ as $N$ goes to infinity. We conclude with the convergence of
$(L_2^N(Nt)/{N},  0\leq t< t_0)$.
\end{pf}
We can now state a fluid limit result concerning this network under the
assumption $\rho_1<1/2$ and $\rho_2>1/2$ and with the more general
initial conditions considered in Section~\ref{ts1}. Analogous
statements for the other cases are available, but for the sake of
simplicity we do not give them here.

\begin{theorem}[(Fluid limits)]\label{fluidtheo}
Suppose that $\rho_1<1/2$ and $\rho_2>1/2$. If $(L^N_1(t), L^N_2(t))$
is the solution to the SDE~(\ref{SDE}) with initial conditions such that
\[
\lim_{N\to+\infty} \biggl(\frac{L^N_1(0)}{N}, \frac
{L^N_2(0)}{N}
\biggr)=(x,1-x),
\]
for some $x \in[0,1]$ then, for the convergence in distribution, we have
\[
\lim_{N\to+\infty} \biggl(\frac{L_1^N(Nt)}{N},\frac{L_2^N(Nt)}{N}
\biggr)=\bigl(\ell_1(t),\ell_2(t)\bigr),
\]
where the pair $(\ell_1,\ell_2)$ is defined as follows: if
$t_1(x)={2x}/(\mu_1-2\lambda_1)$,
\[
\bigl(\ell_1(t),\ell_2(t)\bigr)= \cases{ \displaystyle
\biggl(x+ \biggl[\lambda_1-\frac{\mu_1}{2} \biggr]t,
(1-x)+ \biggl[\lambda_
2-\frac{\mu_2}{2} \biggr]t \biggr),& \quad$t{\leq}t_1$,
\vspace*{3pt}\cr
\displaystyle \bigl(0, \bigl(
\ell_2(t_1)+ \bigl[\lambda_2-\mu
_2(1-\rho_1) \bigr](t-t_1) \bigr)^+
\bigr),&\quad$t{\geq} t_1$.}
\]
\end{theorem}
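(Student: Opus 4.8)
The plan is to split the fluid trajectory at the time $t_1=t_1(x)$ at which the first coordinate is drained to the fluid level $0$, and to treat the two regimes $[0,t_1)$ and $[t_1,\infty)$ separately before gluing them with the strong Markov property, exactly in the spirit of the proof of Proposition~\ref{Jac}.

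\emph{Phase~1 (both coordinates macroscopic).} As long as $L_1^N$ stays above $N/(\log N)^2$ and $L_2^N$ is of order $N$, both logarithms are equivalent to $\log N$, and Relation~\eqref{eqelem} (equivalently the bounds~\eqref{ineq2} obtained in the proof of Proposition~\ref{Jac}) give $W_i(L_1^N,L_2^N)\to 1/2$ uniformly. I would then run the by now standard fluid argument on the integrated form of the SDE~\eqref{SDE}: the two martingale terms are negligible at the fluid scale by Doob's inequality, as in Proposition~\ref{asympX}, and the drift of the $i$th coordinate, namely $\lambda_i-\mu_i W_i$, converges to $\lambda_i-\mu_i/2$. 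Tightness of $(\overline{L}_N)$ together with the identification of every limit point as the solution of $\dot\ell_i=\lambda_i-\mu_i/2$ then yields the two affine trajectories on $[0,t_1)$. Since $\rho_1<1/2$ (resp. $\rho_2>1/2$) the slope $\lambda_1-\mu_1/2$ is negative (resp. $\lambda_2-\mu_2/2$ is positive), so $\ell_1$ decreases and vanishes transversally exactly at $t_1(x)=2x/(\mu_1-2\lambda_1)$ while $\ell_2$ stays positive; this matches the convergence $\tau_0^N/N\to t_1(x)$ already established in Proposition~\ref{Jac}. The only delicate point is the very end of the descent: once $L_1^N$ falls below $N/(\log N)^2$ the capacity it receives drifts away from $1/2$, but this occurs at the fluid level $\ell_1=0$ and hence does not affect the limit on $[0,t_1)$.

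\emph{Transition and Phase~2 (local equilibrium).} At time $\approx t_1 N$ we have $\overline{L}_1^N\to 0$ while $L_2^N\approx\ell_2(t_1)N$. I would invoke the remaining estimates of Proposition~\ref{Jac} (on $\tau_1^N$ and $T_N$) to see that, within an additional $o(N)$ units of time, $L_1^N$ enters the band $(L_2^N)^\alpetu\pm\sqrt{(L_2^N)^\alpetu\log L_2^N}$; during this short window $L_2^N$ moves by $o(N)$, so it is still $\approx\ell_2(t_1)N$. From this configuration $(\approx M^\alpetu,\approx M)$ with $M=\ell_2(t_1)N$, the local-equilibrium results apply with reference scale $M$ (note $M/N\to\ell_2(t_1)>0$, so $\log M\sim\log N$ and $M^\alpetu=o(N)$): Theorem~\ref{theoprov} and Corollary~\ref{corol} force $L_1^N\sim(L_2^N)^\alpetu=o(N)$, whence $\ell_1\equiv 0$ on $[t_1,\infty)$, and show that class~$1$ then consumes exactly the fraction $\rho_1$ of the server, so that $\ell_2$ evolves at rate $\lambda_2-\mu_2(1-\rho_1)=\mu_2(\rho_1+\rho_2-1)$, truncated at $0$ by the work-conserving reflection. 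This produces the second line of the claimed formula.

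\emph{Assembly and the main obstacle.} I would finally write $T_N=\tau_0^N+(\tau_1^N-\tau_0^N)+(T_N-\tau_1^N)$ as in Proposition~\ref{Jac}, combine the three limits by Slutsky's lemma and the strong Markov property, and paste the two affine pieces, which match continuously at $t_1$ (only the slope of $\ell_2$ jumps from $\lambda_2-\mu_2/2$ to $\lambda_2-\mu_2(1-\rho_1)$). The main obstacle is the gluing across scales at $t_1$: one must justify that Theorem~\ref{theoprov} and Corollary~\ref{corol} may be restarted from the genuine random post-transition configuration rather than from the idealised state $(N^\alpetu,N)$, which is precisely what the estimates of Proposition~\ref{Jac} provide by controlling the transition in $o(N)$ fluid time, together with the harmless change of reference scale $N\mapsto M$. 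A secondary issue is that Theorem~\ref{lem1} and Theorem~\ref{theoprov} are stated under $\rho_1+\rho_2<1$, whereas the present hypothesis $\rho_2>1/2$ also permits $\rho_1+\rho_2\geq 1$; in the overloaded case $L_2^N$ climbs above $N$, so the event ${\cal E}$ in the proof of Theorem~\ref{lem1} must be replaced by $\{L_2^N/N\in[c_1,c_2]\}$ for suitable constants $0<c_1<c_2$, after which $\log L_2^N\sim\log N$ still holds and the averaging argument, hence $\ell_1\equiv 0$ and the rate $\lambda_2-\mu_2(1-\rho_1)$, carries over verbatim.
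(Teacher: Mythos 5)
Your proposal is correct and follows essentially the same route as the paper's (sketched) proof: equal capacity sharing of $1/2$ on $[0,t_1)$ as already established in the proof of Proposition~\ref{Jac}, an $o(N)$ transition after time $Nt_1$ controlled by the hitting-time estimates of Proposition~\ref{Jac}, and then Theorem~\ref{theoprov} and Corollary~\ref{corol} forcing $L_1^N\sim (L_2^N)^{\alpha^*}=o(N)$, so that queue~2 evolves at rate $\lambda_2-\mu_2(1-\rho_1)$ at the fluid scale. You are in fact more careful than the paper on one point its sketch silently glosses over: the hypotheses $\rho_1<1/2$, $\rho_2>1/2$ permit $\rho_1+\rho_2\geq 1$, which lies outside the stated scope of Theorems~\ref{lem1} and~\ref{theoprov}, and your proposed replacement of the event ${\cal E}$ by a band $\{L_2^N/N\in[c_1,c_2]\}$ (under which $\log L_2^N\sim \log N$ still holds) is precisely the adjustment needed to make the averaging argument carry over to the overloaded case.
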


\begin{pf}
We give only a sketch of the proof. Until the time $Nt_1$, both queues
are of the same asymptotic order for the $\log$ function.
Consequently, as it has been already seen in the proof of
Proposition~\ref{Jac}, they both receive half of the capacity. Notice
that on the time interval $[0,Nt_1]$, because of our assumptions the
variable $(L_1^N(t))$ decreases whereas $(L_2^N(t))$ increases. After
time $Nt_1$, from Proposition~\ref{Jac} and Theorem~\ref{theoprov},
the variable $(L_1^N(t))$ is of the order of $N^{\alpha^*}$ and is
therefore negligible for the fluid scaling.
\end{pf}

The following proposition completes the description of fluid limits of
the system. Its proof follows the same line as the above proposition,
it is omitted.

\begin{proposition}\label{fluid+}
If $(L^N_1(t),L^N_2(t))$ is the solution to the SDE~(\ref{SDE}) with
initial conditions such that
\[
\lim_{N\to+\infty} \biggl(\frac{L^N_1(0)}{N}, \frac
{L^N_2(0)}{N}
\biggr)=(x,1-x),
\]
then, for the convergence in distribution, we have
\[
\lim_{N\to+\infty} \biggl(\frac{L_1^N(Nt)}{N},\frac{L_2^N(Nt)}{N}
\biggr)=\bigl(\ell_1(t),\ell_2(t)\bigr),
\]
where the pair $(\ell_1,\ell_2)$ is defined as follows:

\begin{itemize}
\item[--] If $\rho_1>1/2$ and $\rho_2>1/2$, then
\[
\bigl(\ell_1(t),\ell_2(t)\bigr)= \biggl(x+ \biggl[
\lambda_1-\frac{\mu
_1}{2} \biggr]t, (1-x)+ \biggl[
\lambda_
2-\frac{\mu_2}{2} \biggr]t \biggr),\qquad t\geq0.
\]
\item[--] If $\rho_1<1/2$ and $\rho_2<1/2$,
when $x$ is such that
\[
t_1\stackrel{\mathit{def.}} {=}\frac{x}{\mu_1/2-\lambda_1}\leq
t_2\stackrel{\mathit{def.}} {=}\frac{1-x}{\mu_2/2-\lambda_2}
\]
then
\[
\bigl(\ell_1(t),\ell_2(t)\bigr)= \cases{ \displaystyle
\biggl(x+ \biggl[\lambda_1-\frac{\mu_1}{2} \biggr]t,
(1-x)+ \biggl[\lambda_
2-\frac{\mu_2}{2} \biggr]t \biggr),&
\quad$t{\leq}t_1$,
\vspace*{3pt}\cr
\displaystyle \bigl(0, \bigl(
\ell_2(t_1)+ \bigl[\lambda_2-\mu
_2(1-\rho_1) \bigr](t-t_1) \bigr)^+
\bigr),&\quad$t{\geq} t_1$,}
\]
and similarly for $t_2\leq t_1$.
\end{itemize}
\end{proposition}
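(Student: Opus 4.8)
The plan is to reduce both cases to the two dynamical regimes that have already been analysed. In the \emph{balanced regime} both coordinates are of order $N$, so the $\log$-weights $W_i(L_1^N,L_2^N)$ each converge to $1/2$ and each class receives one half of the server; in the \emph{unbalanced regime} one coordinate is of order $N$ while the other has collapsed to the scale $N^\alpetu$ and is therefore invisible after the fluid scaling. In the balanced regime I would argue exactly as in the proof of Proposition~\ref{Jac} and in the first part of the proof of Theorem~\ref{fluidtheo}: as long as $\inf_{s\le t}\big(L_1^N(Ns)\wedge L_2^N(Ns)\big)\ge cN$ for some $c>0$, estimates analogous to~\eqref{ineq2} give $W_i(L_1^N,L_2^N)=1/2+o(1)$ uniformly, and a standard tightness and martingale argument applied to the SDE~\eqref{SDE} shows that $\big(L_1^N(N\cdot)/N,L_2^N(N\cdot)/N\big)$ solves the autonomous system $\dot\ell_j=\lambda_j-\mu_j/2$. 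In the unbalanced regime I would simply invoke Proposition~\ref{Jac} and Theorem~\ref{theoprov}: once the smaller queue has dropped to $o(N)$ it reaches a neighbourhood of $(L_2^N)^\alpetu$ in fluid time $o(1)$ and remains there, so the surviving queue takes the leftover capacity and evolves at the reduced rate.

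For the case $\rho_1>1/2$, $\rho_2>1/2$ I would first observe that $\lambda_j-\mu_j/2>0$ for both $j$, so in the balanced regime each coordinate strictly increases; hence once both are of order $N$ neither can return to $0$ and the balanced regime persists for all $t\ge 0$, yielding $\ell_j(t)=\ell_j(0)+(\lambda_j-\mu_j/2)t$. When $x\in(0,1)$ both coordinates start of order $N$ and this is immediate. The boundary values $x\in\{0,1\}$ require one extra step: if, say, $x=0$, then $L_1^N(0)=o(N)$ and I would use the hitting-time estimate of Proposition~\ref{Job} in the case $\alpetu>1$ (which holds precisely because $\rho_1>1/2$) to show that $L_1^N$ climbs to order $N$ on the fluid time scale. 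The $\log$-weight of class~$1$ is already $1/2+o(1)$ as soon as $L_1^N\ge N^{1-\eps}$, and by Proposition~\ref{asympX} the sub-$N$ part of the trajectory occupies a fluid-time interval of length $o(1)$; it therefore disappears after scaling, and the linear law $\ell_1(t)=(\lambda_1-\mu_1/2)t$ holds.

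For the case $\rho_1<1/2$, $\rho_2<1/2$ I would assume $t_1\le t_2$, the subcase $t_2\le t_1$ being symmetric. Here $\lambda_j-\mu_j/2<0$, so in the balanced regime both coordinates decrease linearly at the rates $\lambda_j-\mu_j/2$, exactly as in the first phase of Proposition~\ref{Jac}; the coordinate with the smaller ratio, namely $L_1^N$, reaches $o(N)$ first, at the fluid time $\tau_0^N/N\to t_1=x/(\mu_1/2-\lambda_1)$, while $L_2^N(Nt_1)/N\to \ell_2(t_1)>0$ because $t_1\le t_2$. From time $Nt_1$ on I would apply Proposition~\ref{Jac} and Theorem~\ref{theoprov} to the shifted process started at $\big(o(N),\ell_2(t_1)N\big)$: its first coordinate stabilises at the scale $N^\alpetu$ and is negligible, class~$1$ then consumes the fraction $\rho_1$ of the capacity, and class~$2$ evolves at rate $\lambda_2-\mu_2(1-\rho_1)$ until $\ell_2$ hits $0$, which is the second line of the claimed formula. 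Splicing the two phases by the strong Markov property and Slutzky's Lemma, as at the end of the proof of Proposition~\ref{Jac}, gives the announced limit.

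Relative to Theorem~\ref{fluidtheo}, the only genuinely new bookkeeping is the sign analysis identifying which coordinate empties first ($t_1$ versus $t_2$) and the verification that the balanced phase is governed by the drift $\lambda_j-\mu_j/2$. I expect the main technical point to be making the balanced-regime fluid convergence uniform up to the \emph{random} emptying time $\tau_0^N$ rather than up to a fixed deterministic horizon; but this is handled precisely as in Proposition~\ref{Jac}, where the identity $W_i=1/2+o(1)$ is propagated over $[0,\tau_0^N\wedge(KN)]$. Everything after the transition is a direct appeal to the stabilisation results already proved, so no essentially new estimate is needed.
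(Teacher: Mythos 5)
Your proposal is correct and follows essentially the same route as the paper, which omits the proof precisely because it ``follows the same line'' as Theorem~\ref{fluidtheo}: a balanced phase in which both logarithmic weights converge to $1/2$ so the scaled process obeys $\dot\ell_j=\lambda_j-\mu_j/2$, followed (in the subcritical case, after the random time $\tau_0^N/N\to t_1$) by the collapse of the emptier queue to the $N^\alpetu$ scale via Proposition~\ref{Jac} and Theorem~\ref{theoprov}, and in the supercritical case by persistence of the balanced regime, with the boundary initial states $x\in\{0,1\}$ handled through Propositions~\ref{asympX} and~\ref{Job}. Your observation that Proposition~\ref{Jac} cannot be invoked verbatim when $\rho_2<1/2$ and that its hypothesis must be replaced by the remark that $L_2^N$ stays of order $N$ over the $o(N)$ transition window is exactly the ``sign bookkeeping'' the paper leaves implicit.
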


\begin{rem*}
Note that the constant ${\alpha^*}$ does not play a role in the fluid
limit, but clearly enough this result is much weaker that Theorem~\ref
{theoprov}. As we shall see in Section~\ref{heavysec}, the constant
${\alpha^*}$ plays an important qualitative role in the expression of the
invariant distribution when the regime is close to saturation.
\end{rem*}

\section{Heavy traffic regime}\label{heavysec}
When $\rho_1+\rho_2<1$, since the queueing system is work conserving
the Markov process $(L_1(t),L_2(t))$ has an invariant distribution~$\pi
_\rho$, where $\rho=(\rho_1,\rho_2)$. In the following, $(L_{\rho,1},L_{\rho,2})$ stands for a random variable with distribution $\pi
_\rho$. Obtaining explicit expressions to describe $\pi_\rho$ seems
to be quite difficult: because of the logarithmic weights, the double
generating function of $(\pi_\rho(m,n), (m,n)\in\N)$ does not
satisfy an autonomous equation as it is often the case, for example, in
classic two-dimensional reflected random walks. A precise result on the
asymptotic behavior of $\pi_\rho$ when $\rho_1+\rho_2$ is close to
$1$ can nevertheless be obtained. Its proof relies heavily on the
scaling results of the previous sections.

\begin{theorem}[(Heavy traffic limit of invariant distribution)]\label{heavyth}
If $\rho=(\rho_1,\rho_2)$ and $\rho_1<1/2$ is fixed, then as $\bar
{\rho}=\rho_1+\rho_2$ tends to $1$, we have the convergence in distribution
\[
\lim_{\lambda_2 \nearrow\mu_2(1-\rho_1)} \bigl((1-\bar{\rho })^{{\alpha^*}}
L_{\rho,1},(1-\bar{\rho})L_{\rho,2} \bigr)=\bigl(E_\eta
^{\alpha^*},E_\eta\bigr),
\]
where here again ${\alpha^*}=\rho_1/(1-\rho_1)$ and $E_\eta$ denotes
an exponentially distributed random variable whose parameter $\eta$ is
given by
\[
\eta^{-1}=\frac{\mu_2}{\sqrt{2}}\sqrt{1+\frac{(\mu_1-\mu
_2)^2}{\mu_1\mu_2}
\rho_1(1-\rho_1)}.
\]
\end{theorem}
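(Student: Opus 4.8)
The plan is to reduce the two--dimensional problem to a one--dimensional heavy--traffic analysis of the saturated coordinate $L_{\rho,2}$, and then to transport the result to $L_{\rho,1}$ through the relation $L_1\sim L_2^{\alpetu}$ already established on the fluid scale. Writing $\eps=1-\bar\rho\to 0$, the scaling in the statement is $(\eps^{\alpetu}L_{\rho,1},\eps L_{\rho,2})$; since $\eps^{\alpetu}L_{\rho,1}=(\eps L_{\rho,2})^{\alpetu}\cdot(L_{\rho,1}/L_{\rho,2}^{\alpetu})$ and the continuous map $x\mapsto x^{\alpetu}$ commutes with the limit, it suffices to prove two things: first, that under $\pi_\rho$ the ratio $L_{\rho,1}/L_{\rho,2}^{\alpetu}$ concentrates at $1$ as $\eps\to 0$; and second, that $\eps L_{\rho,2}$ converges in distribution to $E_\eta$. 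The first point is the stationary counterpart of Theorem~\ref{lem1} and Corollary~\ref{corol}: one runs the process from a typical configuration drawn from $\pi_\rho$ and uses the stochastic--averaging estimate~\eqref{conc1} to show that, with probability tending to $1$, $L_1$ sits within the $\sqrt{L_2^{\alpetu}\log L_2}$--window around $L_2^{\alpetu}$; stationarity then upgrades this into a statement about $\pi_\rho$ itself.

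For the marginal of $L_{\rho,2}$ I would set up a diffusion approximation. On the fluid time scale, whenever $L_1\sim L_2^{\alpetu}$ the fraction $W_2$ received by queue~$2$ equals $1-\rho_1$ to leading order (this is the content of Corollary~\ref{corol}), so $L_2$ has infinitesimal drift $\lambda_2-\mu_2(1-\rho_1)=-\mu_2\eps$. Near saturation this drift is of order $\eps$, so $\eps L_{\rho,2}$ is the natural scaling and $L_2$ should behave like a reflected Brownian motion with drift $-\mu_2\eps$ and some infinitesimal variance $\sigma^2$, whose invariant law is $\mathrm{Exp}(2\mu_2\eps/\sigma^2)$; consequently $\eps L_{\rho,2}$ converges to $\mathrm{Exp}(\eta)$ with $\eta=2\mu_2/\sigma^2$. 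The whole problem then comes down to identifying $\sigma^2$ and matching $\eta^{-1}=\sigma^2/(2\mu_2)$ against the stated formula.

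The variance $\sigma^2$ has two sources, and this is where the logarithmic--weight mechanism makes itself felt. The first is the intrinsic Poisson fluctuation of the arrivals and departures of queue~$2$, contributing $\lambda_2+\mu_2(1-\rho_1)\to 2\mu_2(1-\rho_1)$ at criticality. The second, and more delicate, comes from the fast fluctuations of $L_1$ about $L_2^{\alpetu}$: linearising $W_2$ as in~\eqref{eqelem} shows that a relative displacement $u=(L_1-L_2^{\alpetu})/L_2^{\alpetu}$ perturbs the service rate of queue~$2$ by $-\mu_2 u/((1+\alpetu)^2\log L_2)$, and by Theorem~\ref{theostab} the process $u$ is, on the time scale $N^{\alpetu}\log N$, a stationary Ornstein--Uhlenbeck process with variance $\rho_1(1+\alpetu)^2$ and relaxation rate $\mu_1/(1+\alpetu)^2$. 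Because this time scale is much shorter than the fluid time scale $N$ (as $\alpetu<1$), these perturbations are averaged out, but their temporal correlations feed an additional Green--Kubo term, of the form $2\int_0^{\infty}\Cov\big(u(0),u(s)\big)\diff s$, into the infinitesimal variance of $L_2$. Carrying out this averaging, together with the $1/\log N$ shift of the centring term already visible in Proposition~\ref{tclgen}, is what produces the nontrivial correction factor $1+\frac{(\mu_1-\mu_2)^2}{\mu_1\mu_2}\rho_1(1-\rho_1)$ and hence the stated value of $\eta$.

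The hard part is twofold. First, the effective--variance computation is exactly the non--standard stochastic averaging the paper warns about: the fluctuations fed into $L_2$ are of order $\sqrt{N}$ on the fluid scale, a priori large compared with $N^{\alpetu}$, and are tamed only by the strong ergodicity of the underlying Ornstein--Uhlenbeck process, so one cannot simply quote a classical averaging theorem but must control the correlation integral by hand, most likely through the hitting--time and coupling estimates of Sections~\ref{ts2} and~\ref{ts3}. Second, one must justify the interchange of the limits $t\to\infty$ (stationarity) and $\eps\to 0$ (heavy traffic): this requires moment or Lyapunov bounds on $\pi_\rho$ that are uniform as $\eps\to 0$, giving tightness of $\{\eps L_{\rho,2}\}$, together with the identification of every subsequential limit as the unique invariant law of the limiting reflected diffusion. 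I expect the variance identification and this uniform tightness to be the two places where most of the work lies.
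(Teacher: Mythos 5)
Your proposal misses the single decisive idea of the paper's proof: because the policy is work conserving, the stationary \emph{workload} $L_{\rho,1}/\mu_1+L_{\rho,2}/\mu_2$ has exactly the law of the stationary workload of an $M/G/1$ queue with arrival rate $\lambda_1+\lambda_2$ and hyperexponential services $\sigma=BE_{\mu_1}+(1-B)E_{\mu_2}$, $B$ Bernoulli with parameter $\lambda_1/(\lambda_1+\lambda_2)$. Kingman's classical heavy-traffic theorem then gives at once the convergence of $(1-\bar\rho)\bigl(L_{\rho,1}/\mu_1+L_{\rho,2}/\mu_2\bigr)$ to an exponential variable, the tightness of the scaled pair, \emph{and} the constant $\eta$: the factor $1+\frac{(\mu_1-\mu_2)^2}{\mu_1\mu_2}\rho_1(1-\rho_1)$ is a purely static second-moment quantity of the service distribution (at criticality one checks the identity $(\lambda_1+\lambda_2)(\rho_1/\mu_1+\rho_2/\mu_2)=1+\frac{(\mu_1-\mu_2)^2}{\mu_1\mu_2}\rho_1(1-\rho_1)$), not the output of any dynamical averaging. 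The entire program you set up for the marginal of $L_{\rho,2}$ --- a reflected diffusion limit with effective variance identified through a Green--Kubo integral over the Ornstein--Uhlenbeck fluctuations, uniform-in-$\eps$ Lyapunov bounds, and an interchange of the limits $t\to\infty$ and $\eps\to 0$ --- is precisely what this observation makes unnecessary, and in your sketch every one of these steps is left open: you never compute the correlation integral, never verify that it yields the stated $\eta$, and give no mechanism for the uniform tightness. (As a consistency check, carrying your own heuristic through with the OU variance $\rho_1(1+\alpetu)^2$, relaxation rate $\mu_1/(1+\alpetu)^2$ and rate sensitivity $\mu_2/((1+\alpetu)^2\log L_2)$ gives an extra variance $2\rho_1\mu_2^2/\mu_1$ on top of the Poisson part $2\mu_2(1-\rho_1)$, hence $\eta^{-1}=1-\rho_1+\rho_1\mu_2/\mu_1=\mu_2(\rho_1/\mu_1+\rho_2/\mu_2)$, which is indeed the Kingman workload constant up to normalization --- but you assert rather than derive this identification, and in the stated square-root form it does not follow from your computation at all. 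This is exactly the ``non-standard stochastic averaging'' the paper warns against and deliberately circumvents.)

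There are also two gaps in your reduction step. First, you invoke Theorem~\ref{lem1} and Corollary~\ref{corol} ``from a typical configuration drawn from $\pi_\rho$'', but those results assume the initial condition already lies in the window $L_1\approx L_2^{\alpetu}$, so applying them to $\pi_\rho$ is circular; the paper instead uses the hitting-time estimates of Propositions~\ref{tajine} and~\ref{Jac} to show that from an \emph{arbitrary} state with $L_2$ of order $(1-\bar\rho)^{-1}$ the window is reached in a time which is $o\bigl((1-\bar\rho)^{-1}\bigr)$ relative to the fluid scale, and only then does stationarity transfer the statement back to $\pi_\rho$. Second, before any dynamic result can be used one must know that $L_{\rho,2}$ is itself of order $(1-\bar\rho)^{-1}$ with probability tending to one; in the paper this is a separate contradiction argument showing $\P(X_2=0)=0$ for every subsequential limit, via drift estimates over a horizon $s_0/(1-\bar\rho_n)$ under the conditions $(C_1)$--$(C_3)$. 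Your proposal assumes this implicitly, but without it the concentration $L_{\rho,1}/L_{\rho,2}^{\alpetu}\to 1$, and hence your very first reduction, has no starting point.
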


\begin{pf}
The proof uses the fact that the load of this system, that is, the sum
of the services to be processed, is the same as for a classical $M/G/1$
FIFO queue for which a classical heavy traffic limit results holds. It
gives that, for $\bar{\rho}\sim1$, the relation ${L_{\rho,1}}/{\mu
_1}+{L_{\rho,2}}/{\mu_2}\sim X/(1-\bar{\rho})$ holds in
distribution for a convenient exponential random variable. This shows
that at least one of the variables must be large. Then one considers
the process $(L_1(t),L_2(t))$ with initial state given by $(L_{\rho,1},L_{\rho,2})$ so that it is stationary. The results of the previous
sections show that if the initial state is large, very quickly $L_1$
will be of the order of $L_2^{\alpha^*}$, but since the process is
stationary it implies that this is also true for $L_{\rho,1}$ and
$L_{\rho,2}^{\alpha^*}$.

For every $\xi>0$, let $E_{\xi}$ denote, as before, an exponential
random variable with parameter $\xi$. All the variables used here will
be assumed to be independent.
The total workload of the system is independent of the service
allocation and so has the same (invariant) distribution as the workload
of an $M/G/1$ queue, with arrival rate $\lambda_1+\lambda_2$ and with
the same service distribution as
\[
\sigma\stackrel{\mathrm{def.}} {=}BE_{\mu_1}+(1-B)E_{\mu_2},
\]
where $B$ is a Bernoulli random variable with parameter $\lambda
_1/(\lambda_1+\lambda_2)$.
Hence, Kingman's heavy traffic result for the workload at equilibrium
(see Kingman~\cite{Kingman05} or Proposition~3.10 of Robert~\cite{Robert}) gives the convergence in distribution
%
\begin{equation}
\label{King} \lim_{\lambda_2 \nearrow\mu_2(1-\rho_1)} (1-\bar{\rho}) \biggl(
\frac{L_{\rho,1}}{\mu_1}+\frac{L_{\rho,2}}{\mu_2} \biggr)= E_{\eta_0},
\end{equation}
where $\eta_0$ is the constant given by
\begin{eqnarray*}
\eta_0 &=& \lim_{\lambda_2 \nearrow\mu_2(1-\rho_1)} \frac
{2}{(\lambda_1+\lambda_2)\sqrt{\Var(\sigma-E_{\lambda_1+\lambda_2})}}
\\
&=&
\sqrt{2} \Big/\sqrt{1+\frac{(\mu_1-\mu_2)^2}{\mu_1\mu_2}\rho _1(1-\rho_1)}.
\end{eqnarray*}
In particular, the family of random variables
%
\begin{equation}
\label{seq} \bigl[(1-\bar{\rho}) (L_{\rho,1}), (1-\bar{\rho})
(L_{\rho,2}) \bigr]
\end{equation}
is tight as $\bar{\rho}\nearrow1$. Let us denote a possible limit by
$(X_1,X_2)$, corresponding to a sequence $(\bar{\rho}_n)$ converging
to $1$.
For every $n\geq1$, let us define $(L_{\rho_n,1}(t),L_{\rho
_n,2}(t))$ as the process with initial condition $(L_{\rho
_n,1},L_{\rho_n,2})$, which is thus a stationary process.

The strategy of the proof can be described as follows: use the results
of Sections~\ref{ts1} and~\ref{ts3} to prove that, for a fixed time
$t_n$, $L_{\rho_n,2}(t_n)$ is large and $L_{\rho_n,1}(t_n)$ is close
to $(L_{\rho_n,2}(t_n))^{\alpha^*}$. Consequently, by stationarity, the
equivalence $L_{\rho_n,1}\sim L_{\rho_n,2}^{\alpha^*}$ holds and
relation~(\ref{King}) then gives us the desired result.

To start with, let us assume that $\P(X_2=0)>0$. Then there exists
$\delta>0$ with the property that for any $\eta>0$, there exists
$n_0\in\N$ such that for every $n\geq n_0$,
%
\begin{equation}
\label{tgto} \P \biggl(L_{\rho_n,2}\leq\frac{\eta}{1-\bar{\rho}_n} \biggr) \geq
\delta.
\end{equation}
Now, relation~(\ref{King}) tells us that $\P(X_1+X_2>0)=1$.
Consequently, for any $\varepsilon>0$ there exists $\eta_0>0$ and
$n_0$ such that if $n\geq n_0$ and $\eta_1+\eta_2<2\eta_0$,
%
\begin{equation}
\label{m1} \P \biggl(L_{\rho_n,1}\leq\frac{\eta_1}{1-\bar{\rho}_n},
L_{\rho
_n,2}\leq\frac{\eta_2}{1-\bar{\rho}_n} \biggr)\leq\varepsilon.
\end{equation}
Let $s_0>0$ and $\eta_1>0$, and set $t_n=s_0/(1-\bar{\rho}_n)$. For
$n$ large enough, we have
%
\begin{eqnarray}\label{dureuil}
\delta&\leq&\P \biggl(L_{\rho_n,2}\leq\frac{\eta_1}{1-\bar{\rho
}_n}
\biggr) =\P \biggl(L_{\rho_n,2}(t_n)\leq\frac{\eta_1}{1-\bar
{\rho}_n}
\biggr)\nonumber
\\
&\leq& \P \biggl(L_{\rho_n,2}(t_n)\leq\frac{\eta_1}{1-\bar{\rho}_n}, \sup
_{[0,t_n]}L_{\rho_n,2}(s)>\frac{\eta_0}{3(1-\bar{\rho
}_n)} \biggr)+
\varepsilon
\nonumber\\[-8pt]\\[-8pt]\nonumber
&&{}+\P \biggl(L_{\rho_n,2}(t_n)\leq\frac
{\eta_1}{1-\bar{\rho}_n},
\\
&&\hspace*{26pt} \sup
_{[0,t_n]}L_{\rho_n,2}(s)\leq\frac
{\eta_0}{3(1-\bar{\rho}_n)},
L_{\rho_n,1}(0)> \frac{\eta
_0}{1-\bar{\rho}_n} \biggr).\nonumber
\end{eqnarray}
Clearly enough since $(L_2(t))$ decreases at rate at most $\mu_2$, as
$n$ increases the first term on the right-hand side of (\ref{dureuil})
can be made arbitrarily small by choosing $\eta_1$ and $s_0$ in such a
way that
%
{\renewcommand{\theequation}{C$_1$}
\begin{equation}\label{C1}
\eta_1 < \frac{\eta_0}{3}+(\lambda_2-
\mu_2)s_0 = \frac{\eta
_0}{3}- \mu_2(1-
\rho_2)s_0.
\end{equation}}\setcounter{equation}{45}%
The third term in the right-hand side of relation~(\ref{dureuil}) can
be upper bounded by
%
\begin{eqnarray}
&& \P \biggl( L_{\rho_n,1}(0)> \frac{\eta_0}{1-\bar{\rho}_n}, \inf_{[0,t_n]}L_{\rho_n,1}(s)<
\frac{2\eta_0}{3(1-\bar{\rho}_n)} \biggr)\label{chateaulapompe}
\\
&&\qquad {}+\P \biggl(L_{\rho_n,2}(t_n)\leq \frac{\eta_1}{1-\bar{\rho}_n}, \sup
_{[0,t_n]}L_{\rho_n,2}(s)\leq\frac{\eta_0}{3(1-\bar{\rho
}_n)},
\nonumber\\[-8pt]\label{chateaudeau} \\[-8pt]\nonumber
&&\hspace*{54pt} L_{\rho_n,1}(0) > \frac{\eta_0}{1-\bar{\rho}_n}, \inf_{[0,t_n]}L_{\rho_n,1}(s)
\geq\frac{2\eta_0}{3(1-\bar{\rho
}_n)} \biggr).
\end{eqnarray}
As before, the quantity in (\ref{chateaulapompe}) will tend to $0$ as
$n \rightarrow\infty$ if we choose $s_0$ such that
{\renewcommand{\theequation}{C$_2$}
\begin{equation}\label{C2}
\frac{\eta_0}{3}-\mu_1(1-\rho_1)s_0>0.
\end{equation}}%
Finally, if
\[
\sup_{[0,t_n]}L_{\rho_n,2}(s)\leq\frac{\eta_0}{3(1-\bar{\rho
}_n)}\quad
\mbox{and}\quad\inf_{[0,t_n]}L_{\rho_n,1}(s)\geq
\frac
{2\eta_0}{3(1-\bar{\rho}_n)},
\]
the infinitesimal drift $\Delta_{\rho_n,2}(s)$ of $L_{\rho_n,2}$
satisfies on $[0,t_n]$
\begin{eqnarray*}
\Delta_{\rho_n,2}(s) &\geq& \mu_2 \biggl(\rho_2-
\frac{\log (\eta
_0/(3(1-\bar{\rho}_n)) )}{\log (\eta_0/(3(1-\bar{\rho
}_n)) )+\log (2\eta_0/(3(1-\bar{\rho}_n)) )} \biggr)
\\
&\sim& \mu_2 \biggl(\rho_2-\frac{1}{2} \biggr),
\end{eqnarray*}
as $n$ goes to infinity. But $\rho_{n,2}$ converges to $1-\rho_1>1/2$
by assumption, which means that the quantity in (\ref{chateaudeau})
will tend to $0$ as $n$ gets large whenever
%
{\renewcommand{\theequation}{C$_3$}
\begin{equation}\label{C3}
\eta_1<\mu_2(1/2 - \rho_1)s_0.
\end{equation}}\setcounter{equation}{47}%
Choosing first $s_0$ small enough to match (\ref{C1}) and (\ref{C3}), and
then $\eta_1$ small enough to satisfy (\ref{C1}) and (\ref{C3}), we can conclude
that the right-hand side of relation~(\ref{dureuil}) can be made
smaller than $\delta/2$, which yields a contradiction. We have thus
proved that $\P(X_2>0)=1$.

As a consequence, for any $\varepsilon>0$, one can find $\eta_0$ and
$n_0\in\N$ such that for every $n\geq n_0$,
\[
\P \biggl(L_{\rho_n,2}\geq\frac{\eta_0}{1-\bar{\rho}_n} \biggr)\geq1-\varepsilon.
\]
The last step follows an analogous path, in that we choose a convenient
time $t_n$ to first show that, for any $\beta>{\alpha^*}$, the quantity
\[
\P \biggl(L_{\rho_n,2}\geq\frac{\eta_1}{1-\bar{\rho}_n}, L_{\rho
_n,1}\geq
\frac{1}{(1-\bar{\rho}_n)^\beta} \biggr)
\]
is arbitrarily small with the help of Proposition~\ref{tajine}, and
next that for $K$ sufficiently large,
\[
L_{\rho_n,1}\in (L_{\rho_n,2} )^{\alpha^*}+ \bigl[-K\sqrt{
(L_{\rho_n,2} )^{\alpha^*}\log (L_{\rho
_n,2} )},K\sqrt{
(L_{\rho_n,2} )^{\alpha^*}\log (L_{\rho_n,2} )} \bigr]
\]
with high probability by Proposition~\ref{Jac}.

Since $L_{\rho_n,1}$ is of the order of $(L_{\rho_n,2})^{\alpha^*}$ and
that ${\alpha^*}<1$, relation~(\ref{King}) gives the desired
convergence result.
\end{pf}

\section{General functions for resource sharing}\label{loglogsec}\label{Genfsec}
In this section, we consider the case where the function $(\log x)$
describing the access to the resource is replaced by some other
function $(f(x))$. For a two node network, the corresponding $Q$-matrix
is given by: for every $x\in\N_+^2$,
%
\begin{equation}
\label{Qf} \cases{ q(x,x+e_i)=\lambda_i,
\cr
\displaystyle q(x,x-e_i)=\mu_i \frac{f(x_i)}{f(x_1)+f(x_2)}.}
\end{equation}
To concentrate on the most interesting case, throughout this section we
assume that $\rho_1<1/2$. We analyze only the first two time scales in
order to stress the difference with the $\log$ function, at least
concerning the scaling parameters. The proofs of the results presented
here are similar to those in the log case, and so most of the time we
only sketch them.

\subsection{The $\log\log$ function}
\label{subsloglog}
Here, we consider the function $f(x)=\break \log(\log(e+x))$. To simplify
the notation, we use instead the function $x\rightarrow\log
_2(x)\stackrel{\mathrm{def.}}{=}\log(\log x)$. This, of course, does
not change the convergence results obtained in the following paragraphs.

\subsubsection*{Initial phase}
The first time scale is $t\mapsto\phi_N(t)$ with
\[
\phi_N(t)=\exp \bigl[(\log N)^t \bigr].
\]
The stochastic evolution equation is in this case
%
\begin{eqnarray}\label{evol}
L_1\bigl(\phi_N(t)\bigr)
&=&L_1(0)+\lambda_1 \phi_N(t)
\nonumber\\[-8pt]\\[-8pt]\nonumber
&&{} -
\mu_1\int_0^{\phi_N(t)} \frac{\log
_2(L_1(u))}{\log_2(L_1(u))+\log_2(N)}
\,\diff u +M_N\bigl(\phi_N(t)\bigr),
\end{eqnarray}
where $(M_N(t))$ is a local martingale. Since $\phi_N(t)\ll N$ as long
as $t<1$, the second coordinate $L_2$ stays at $N$ at least up to
$t\approx1$. This justifies the fact that in the above expression, the
term $\log_2(L_2(u))$ has been replaced by $\log_2(N)$.

Let us now define $Z_N(t)=L_1(\phi_N(t))/\phi_N(t)$. We have
\begin{eqnarray*}
Z_N(t)&=&Z_N(1)+\lambda_1 -
\frac{1}{\phi_N(t)}
\\
&&{}
- \frac{\mu
_1}{\phi_N(t)}\int_0^{t}
\frac{\log_2(Z_N(u)\phi_N(u))}{\log
_2(Z_N(u)\phi_N(u))+\log_2(N)} \phi_N'(u) \,\diff u +
\frac{M_N(\phi_N(t))}{\phi_N(t)}.
\end{eqnarray*}
Note that, for every $u\geq0$,
\[
\log_2\bigl(Z_N(u)\phi_N(u)\bigr)=
\log_2\bigl(\phi_N(u)\bigr)+ \log \biggl( 1+
\frac
{\log(Z_N(u))}{\log(\phi_N(u))} \biggr)
\]
and
\[
\log_2\bigl(\phi_N(u)\bigr) = u\log_2(N).
\]
Hence, using the same methods as in Section~\ref{ts1}, we obtain the
following equivalence for the convergence in distribution of processes,
uniformly over any compact subset of $(0,{\alpha^*})$:
\[
\bigl(Z_N(t)\bigr)\sim \biggl(\lambda_1 -
\frac{\mu_1}{\phi_N(t)}\int_0^{t}
\frac{u}{u+1} \phi_N'(u) \,\diff u \biggr) \sim
\biggl(\lambda_1 - \mu_1 \frac{t}{t+1} \biggr).
\]
The following proposition is the analogue of Proposition~\ref
{initprop} for the $\log\log$ function. Recall that ${\alpha
^*}={\rho
_1}/{(1-\rho_1)}<1$ since $\rho_1<1/2$.

\begin{proposition}\label{initproploglog}
If $(L^N_1(t),L^N_2(t))$ is the Markov process with $Q$-matrix\break (\ref
{Qf}) and with initial condition $(0,N)$, then the convergence in distribution
\[
\lim_{N\to+\infty} \biggl(\frac{L^N_1 (\exp [(\log
N)^t ] )}{\exp [(\log N)^t ]}, 0<t<{\alpha^*} \biggr)=
\biggl(\lambda_1- \mu_1\frac{t}{t+1}, 0<t<{
\alpha^*} \biggr)
\]
holds for the uniform topology on compact sets of $(0,{\alpha^*})$.
\end{proposition}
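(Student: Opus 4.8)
The plan is to reproduce, for the $\log\log$ time scale, the two-step strategy used for Proposition~\ref{initprop}: first prove the convergence for the one-dimensional \emph{saturated} process in which the second coordinate is frozen at $N$, and then transfer it to $(L_1^N(t))$ by a sandwiching coupling. Since $\alpetu<1$ and $\phi_N(t)=\exp[(\log N)^t]\ll N$ for every fixed $t<1$, the number of class~$2$ jumps up to time $\phi_N(b)$ (for any $b<\alpetu$) is, by the law of large numbers for Poisson processes, negligible compared to $N$ with high probability. Hence $L_2^N$ stays in some interval $[\eta N,\gamma N]$ on $[0,\phi_N(b)]$, so that $\Log(L_2^N(s))\sim\Log(N)$ there. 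Replacing $\Log(N)$ by $\Log(\eta N)$ and $\Log(\gamma N)$ in the departure rate and coupling $(L_1^N(s))$ between the corresponding saturated processes reduces the statement to the saturated case, exactly as at the end of the proof of Proposition~\ref{initprop}; note that both bounding processes share the same limit since $\Log(\delta N)-\Log(N)=\log(1+\log\delta/\log N)\to 0$.

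For the saturated process I would work with $Z_N(t)=X_N(\phi_N(t))/\phi_N(t)$ and the evolution equation already displayed before the statement. The two crucial facts are the identity $\Log(\phi_N(u))=u\,\Log(N)$, which turns the departure-rate ratio into $u/(u+1)$ in the limit, and the concentration of the normalized measure $\phi_N'(u)\,\diff u/\phi_N(t)$ on $[0,t]$. Because $\phi_N$ grows super-exponentially, $\phi_N(t-\eps)/\phi_N(t)=\exp[(\log N)^{t-\eps}-(\log N)^t]\to 0$, so this probability measure converges weakly to the Dirac mass at the \emph{endpoint} $u=t$; this is the analogue of the collapse to a Dirac mass at $0$ seen in Proposition~\ref{asympX}. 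Using the a priori bound $X_N(s)\le\mathcal{N}_{\lambda_1}([0,s])$ to control $Z_N$ from above and the positivity of the drift below $\phi_N(u)$ to keep $Z_N$ bounded away from $0$, one checks that the correction term $\log(1+\log Z_N(u)/(\log N)^u)$ appearing in $\Log(Z_N(u)\phi_N(u))$ tends to $0$ uniformly on compact subsets of $(0,\alpetu)$, hence does not affect the limiting ratio. Dominated convergence then gives $\phi_N(t)^{-1}\int_0^t \bigl[\Log(Z_N\phi_N)/(\Log(Z_N\phi_N)+\Log N)\bigr]\phi_N'(u)\,\diff u\to t/(t+1)$, whence $Z_N(t)\to\lambda_1-\mu_1\,t/(t+1)$.

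The step I expect to be the main obstacle is showing that the martingale term $M_N(\phi_N(t))/\phi_N(t)$ is uniformly negligible. In the $\log$ case one covers $(0,\alpetu)$ by finitely many intervals $[a,b]$ with $b<2a$, because Doob's inequality bounds the relevant probability by a constant times $\phi_N(b)/\phi_N(a)^2=N^{b-2a}\to 0$. This fails here: $\phi_N(b)/\phi_N(a)^2=\exp[(\log N)^b-2(\log N)^a]$ blows up for every fixed $b>a$, since $(\log N)^{b-a}\to\infty$. The fix is to decompose along a dyadic grid in the \emph{space} variable $x=\phi_N(s)$: on each block $x\in[2^{i-1},2^i]$, Doob's inequality together with $\croc{M_N}(x)\le(\lambda_1+\mu_1)x$ bounds $\P(\sup_{[2^{i-1},2^i]}|M_N(x)|/x\ge\eps)$ by $4(\lambda_1+\mu_1)/(\eps^2 2^i)$, and summing the resulting geometric series over $2^i\in[\phi_N(a),\phi_N(b)]$ gives a bound of order $1/(\eps^2\phi_N(a))\to 0$. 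Equivalently, one may keep the time decomposition of the $\log$ case but with an $N$-dependent mesh of size $\log 2/\log\log N$, which is precisely the $\log\log$ analogue of the condition $b<2a$. Once this is in hand, the remaining estimates are routine translations of those in Proposition~\ref{asympX}.
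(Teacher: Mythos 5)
Your proposal is correct and follows essentially the route the paper takes; note that the paper only \emph{sketches} this proof. The displayed computation preceding the statement --- the identity $\Log(\phi_N(u))=u\,\Log(N)$, the expansion $\Log(Z_N(u)\phi_N(u))=u\,\Log(N)+\log\left(1+\log(Z_N(u))/(\log N)^u\right)$, and the concentration of the measure $\phi_N'(u)\,\diff u/\phi_N(t)$ at the endpoint $u=t$ --- is all the paper supplies, the remaining details being delegated to ``the same methods as in Section~\ref{ts1}''; your sandwiching reduction to the saturated process, including the observation that $\Log(\delta N)-\Log(N)\to 0$ so both bounding processes share the same limit, reproduces the coupling in the proof of Proposition~\ref{initprop}. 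The one place where you go beyond the paper is the martingale estimate, and your diagnosis is accurate: the covering of $(0,\alpetu)$ by finitely many \emph{fixed} intervals $[a,b]$ with $b<2a$, which works in Proposition~\ref{asympX} because the Doob bound is of order $N^{b-2a}$, genuinely fails on this time scale, since $\phi_N(b)/\phi_N(a)^2=\exp\left[(\log N)^a\left((\log N)^{b-a}-2\right)\right]\to\infty$ for every fixed $a<b$; the paper's sketch is silent on this point. Your dyadic-in-space repair is correct: Doob's inequality and $\croc{M_N}(x)\leq(\lambda_1+\mu_1)x$ give $\P\left(\sup_{2^{i-1}\leq x\leq 2^i}|M_N(x)|/x\geq\eps\right)\leq 4(\lambda_1+\mu_1)/(\eps^2 2^i)$, and the geometric sum over $2^i\geq \phi_N(a)$ is $O\left(1/(\eps^2\phi_N(a))\right)\to 0$. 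Only your parenthetical ``equivalently'' remark is off at the boundary: with mesh exactly $\log 2/\log\log N$ one gets $\phi_N(a_{k+1})=\phi_N(a_k)^2$, i.e.\ the borderline case $b=2a$ in which the per-interval Doob bound is a non-vanishing constant and the union bound over the $O(\log\log N)$ intervals diverges; a strictly smaller mesh, say $\log(2-\delta)/\log\log N$, is needed, for which the per-interval bounds $\phi_N(a_k)^{-\delta}$ decay doubly exponentially in $k$ and sum to $O(\phi_N(a_0)^{-\delta})\to 0$. Since your primary dyadic argument already suffices, this slip does not affect the validity of the proof.
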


\subsubsection*{The local equilibrium}
The last proposition together with the results obtained in Section~\ref
{ts2} suggest that, if $\rho_1<1/2$, the process should remain stable
around the value $\exp [(\log N)^{\alpha^*} ]$. As in
Section~\ref{ts2}, let us assume that $L_1^N(0)=\delta\phi_N({\alpha^*}
)$ for some $\delta\leq1$, while $L_2^N(0)=N$. Using the relation
$\rho_1={\alpha^*}/(1+{\alpha^*})$ and the evolution equation~(\ref
{evol}), we obtain that for every $t\geq0$
\begin{eqnarray*}
L_1^N( t)&=&L_1^N(0)
\\
&&{}  - \frac{\mu_1}{{\alpha^*}+1}\int_0^{t}
\frac
{\log
[\log(L_1^N(u))/(\log N )^{\alpha^*}]}{\log[\log(L_1^N(u))/(\log
N)^{\alpha^*}]+({\alpha^*}+1)\log_2(N)} \,\diff u
\\
&&{}+M_N(t).
\end{eqnarray*}
As we shall see, the appropriate scaling of time around $\phi
_N({\alpha^*})$ turns out to be $t\mapsto\psi_N t$, where
\[
\psi_N \stackrel{\mathrm{def.}} {=}\phi_N\bigl({
\alpha^*}\bigr) (\log N )^{\alpha^*}\log_2(N)=\exp \bigl[(\log
N)^{\alpha^*} \bigr] (\log N )^{\alpha^*}\log\log(N).
\]
Indeed, let $\widehat{Z}_N(t)=L_1^N(\psi_N t)/\phi_N({\alpha^*})$. For
every $u>0$, we have
\[
\log \biggl[\frac{\log(L_1^N(\psi_N u))}{(\log N)^{\alpha^*}} \biggr] =\log \biggl[1+ \frac{\log(\widehat{Z}_N(u))}{(\log N)^{\alpha^*}
}
\biggr],
\]
so that
\begin{eqnarray*}
&& \widehat{Z}_N(t)-\widehat{Z}_N(0)-\frac{M_N(\psi_N t)}{\phi
_N({\alpha^*})}+
\frac{M_N(1)}{\phi_N({\alpha^*})}
\\
&&\qquad = -\frac{\mu_1}{({\alpha^*}+1)\phi_N({\alpha^*})}
\\
&&\hspace*{39pt}{}\times \int_0^{\psi_N t}
\frac{\log[\log(L_1^N(u))/(\log N)^{\alpha^*}]}{\log[\log
(L_1^N(u))/(\log N)^{\alpha^*}]+({\alpha^*}+1)\log_2(N)} \,\diff u
\\
&&\qquad \sim- \frac{\mu_1  (\log N )^{\alpha^*}\log
_2(N)}{({\alpha^*}+1)} \int_0^{t}
\frac{\log(\widehat{Z}_N(u))}{\log(\widehat
{Z}_N(u))+({\alpha^*}+1) (\log N )^{\alpha^*}\log_2(N)} \,\diff u.
\end{eqnarray*}
With the same tightness argument as in the proof of Proposition~\ref
{Fluid}, we obtain the corresponding convergence result detailed below.
It is remarkable that the limit is the same as in Proposition~\ref{Fluid}.

\begin{proposition}\label{Fluidll}
If $(L^N_1(t),L^N_2(t))$ is the Markov process with $Q$-mat\-rix~(\ref
{Qf}), and initial conditions $L_2^N(0)=N$ and $L_1^N(0)\sim\delta
\exp[(\log N)^{\alpha^*}]$ for some $\delta\in(0,1]$, then for the
convergence in distribution of processes we have
\[
\lim_{N\to+\infty} \bigl(L_1^N(
\psi_N t)e^{-(\log N)^{\alpha^*}
} \bigr) = \bigl(h(t)\bigr),
\]
where
\[
\psi_N=\exp \bigl[(\log N)^{\alpha^*} \bigr] (\log N
)^{\alpha^*}\log\log N
\]
and $(h(t))$ is the function defined by relation~(\ref{eqh}).

Furthermore, if $L_1^N(0)\sim\exp[(\log N)^{\alpha^*}]+y \sqrt{\psi
_N}$ for some $y\in\R$,
then the sequence of processes
\[
\biggl(\frac{L_1^N(\psi_N t)- e^{(\log N)^{\alpha^*}}}{\sqrt{\psi
_N}} \biggr)
\]
converges in distribution to the Ornstein--Uhlenbeck process defined by
relation~(\ref{OU}).
\end{proposition}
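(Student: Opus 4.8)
The plan is to transcribe, almost verbatim, the arguments developed for the $\log$ weights in Proposition~\ref{Fluid} (law of large numbers) and in Proposition~\ref{tclgen} together with Theorem~\ref{theostab} (Ornstein--Uhlenbeck limit), under the scaling dictionary $N^\alpetu\leftrightarrow\phi_N(\alpetu)=\exp[(\log N)^\alpetu]$ and $\log N\leftrightarrow(\log N)^\alpetu\Log(N)$. With this substitution the evolution equation for $\widehat Z_N(t)=L_1^N(\psi_N t)/\phi_N(\alpetu)$ displayed just before the statement is structurally identical to Relation~\eqref{aux1}, so the same machinery applies. First I would justify, exactly as in the proof of Proposition~\ref{initprop}, replacing $\Log(L_2^N(u))$ by $\Log(N)$: since $\alpetu<1$ one has $e^{(\log N)^\alpetu-\log N}\to0$ and hence $\psi_N=o(N)$, so on $[0,\psi_N T]$ the second coordinate moves by $o(N)$, giving $\Log(L_2^N)\sim\Log(N)$, which is enough to sandwich $L_1^N$ between two saturated processes by a monotone coupling.

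For the first assertion, the confinement estimate would be run as in the opening of the proof of Proposition~\ref{Fluid}: once $\widehat Z_N$ reaches level $2$ I dominate it from above by a reflected $M/M/1$ queue with arrival rate $\lambda_1$ and service rate $\mu_1\alpetu/(\alpetu+1)+C/((\log N)^\alpetu\Log(N))$, and Kingman's inequality (see Kingman~\cite{Kingman}) bounds the height of one excursion by $\exp\bigl(-C'\phi_N(\alpetu)/((\log N)^\alpetu\Log(N))\bigr)$. On the event that $\widehat Z_N$ stays in a fixed compact subinterval of $(0,\infty)$ away from $0$, the integrand
\[
g_N(z)\stackrel{\text{def.}}{=}\frac{\log z}{\log z+(\alpetu+1)(\log N)^\alpetu\Log(N)}\sim\frac{\log z}{(\alpetu+1)(\log N)^\alpetu\Log(N)}
\]
is controlled, so the prefactor $(\log N)^\alpetu\Log(N)$ in front of the drift integral cancels the large denominator and the drift of $\widehat Z_N$ converges to $-\tfrac{\mu_1}{(\alpetu+1)^2}\log(\widehat Z_N)$; the martingale term is killed by a Doob bound on $\croc{M_N}$. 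Tightness via the modulus-of-continuity criterion (Theorem~8.3 in Billingsley~\cite{Billingsley}) then forces every limit point to solve $h(t)=\delta-\tfrac{\mu_1}{(1+\alpetu)^2}\int_0^t\log h(u)\,\diff u$, which is Relation~\eqref{eqh}.

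For the second assertion I would start from $\delta=1$, where $h\equiv1$ and the companion deterministic curve $h_N$ is also $\equiv1$, so that centering by $\phi_N(\alpetu)$ alone is legitimate and no slowly-converging centering is needed (this is the passage from Proposition~\ref{tclgen} to Theorem~\ref{theostab}). Writing $\overline Z_N(t)=(L_1^N(\psi_N t)-\phi_N(\alpetu))/\sqrt{\psi_N}$ and $\overline M_N(t)=M_N(\psi_N t)/\sqrt{\psi_N}$, the increasing process is $\croc{\overline M_N}(t)=\croc{M_N}(\psi_N t)/\psi_N$; near the equilibrium the total jump rate is $\lambda_1+\mu_1\alpetu/(\alpetu+1)=2\lambda_1$, so by the first assertion it converges to $(2\lambda_1 t)$ and the martingale criterion (Theorem~1.4, p.~339 of Ethier and Kurtz~\cite{Ethier:01}) gives $\overline M_N\to\sqrt{2\lambda_1}\,B$. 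Linearizing the drift around $z=1$, where $g_N(1)=0$ and $g_N'(1)\sim 1/((\alpetu+1)(\log N)^\alpetu\Log(N))$, all the prefactors $\phi_N(\alpetu)$, $\sqrt{\psi_N}$ and $(\log N)^\alpetu\Log(N)$ cancel and the integral term in the equation for $\overline Z_N$ reduces, on the confinement event $\{\inf_{[0,T]}\widehat Z_N\geq1/2\}$ provided by the first assertion, to $-\tfrac{\mu_1}{(\alpetu+1)^2}\int_0^t\overline Z_N(u)\,\diff u$. A Gronwall estimate followed by the modulus-of-continuity criterion yields tightness of $(\overline Z_N)$, and Skorohod's representation with Lebesgue's differentiation theorem identify any limit as the solution of $\diff Z=\sqrt{2\lambda_1}\,\diff B-\tfrac{\mu_1}{(\alpetu+1)^2}Z\,\diff t$, i.e. the process~\eqref{OU}; uniqueness of this SDE closes the argument.

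The step I expect to be the real obstacle is the confinement estimate over the long interval $[0,\psi_N T]$. One must certify that the per-excursion Kingman bound, now governed by the doubly-logarithmic drift gap $\sim1/((\log N)^\alpetu\Log(N))$ and the exponentially large target height $\phi_N(\alpetu)=e^{(\log N)^\alpetu}$, still dominates the $\mathcal{O}(\psi_N)$ excursions performed on that interval. Writing $m=(\log N)^\alpetu$, the union bound produces a factor of order $e^{m}\,\mathcal{O}(\log N)\cdot\exp(-C'e^{m}/(m\Log(N)))$, which tends to $0$ precisely because $e^{m}/(m\Log(N))\gg m$; keeping track of these three interacting polylogarithmic and exponential scales is the only genuinely new bookkeeping relative to the $\log$ case, all remaining steps being routine transcriptions.
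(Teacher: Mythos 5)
Your proposal follows essentially the same route as the paper: the paper itself only sketches this proof, writing the evolution equation for $\widehat Z_N(t)=L_1^N(\psi_N t)e^{-(\log N)^\alpetu}$, showing the drift term is asymptotically $-\tfrac{\mu_1}{(1+\alpetu)^2}\int_0^t\log \widehat Z_N(u)\,\diff u$, and invoking ``the same tightness argument as in the proof of Proposition~\ref{Fluid}'' together with the couplings of Propositions~\ref{initprop}, \ref{tclgen} and Theorem~\ref{theostab}, under exactly your dictionary $N^\alpetu\leftrightarrow e^{(\log N)^\alpetu}$ and $\log N\leftrightarrow(\log N)^\alpetu\log\log N$. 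Your extra bookkeeping --- the exact centering at the fixed point $e^{(\log N)^\alpetu}$ when $\delta=1$ (so no $h_N$-type correction is needed), the drift gap of order $1/((\log N)^\alpetu\log\log N)$, the cancellation $\psi_N=e^{(\log N)^\alpetu}(\log N)^\alpetu\log\log N$ in the linearized drift, and the check that the Kingman per-excursion bound beats the $\mathcal{O}(\psi_N)$ union bound --- is correct and merely fills in details the paper leaves implicit.
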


\begin{rem*}
If $\rho_1<1/2$ and if the initial condition is $(0,N)$, we obtain
that $L_1$ is of the order of
\[
\exp\bigl[(\log N)^{\alpha^*}\bigr]
\]
on the time scale $t\mapsto\psi_N t$. This is much smaller that the
quantity $N^{\alpha^*}$ corresponding to the $\log$ policy.
One can then take a function $f$ growing more slowly to infinity, such
as $\log\log\log$. Under the same assumptions, the variable $L_1^N$
live in a region with an even smaller order of magnitude. By pushing
this scheme a little further, we would obtain a policy similar to the
Head of the Line Processor-Sharing (see Bramson~\cite{Bramson2}),
where node $j$ receives the bandwidth
\[
\frac{\mathbf{1}_{\{n_i\neq0\}}}{\mathbf{1}_{\{n_1\neq0\}}+\cdots
+\mathbf{1}_{\{n_J\neq0\}}}.
\]
The main drawback of this policy is that a node with many jobs has the
same fraction of the capacity as a node with only a few of them. For
this reason, it is more likely that local congestion will occur more frequently.
\end{rem*}

\subsection{The time scales for a general function}
Finally, let us return to the general case. Let us assume that the
function $x\mapsto f(x)$ is an increasing continuous function on $\R
_+$, tending to infinity as $x\rightarrow\infty$, and suppose that
there exist two functions $x\mapsto A_f(x)$ and $x\mapsto B_f(x)$ on
$\R_+$ such that for any $t>0$ and $z\geq0$,
%
{\renewcommand{\theequation}{F$_1$}
\begin{equation}\label{F1}
\lim_{x\to+\infty}\frac{f^{-1}(tf(x))}{x}=0,\qquad \lim
_{x\to+\infty}\frac{f(zx)-f(x)}{A_f(x)}=B_f(z).
\end{equation}}\setcounter{equation}{49}%
The first time scale for the initial phase is given by
%
\begin{equation}
\label{tsf1} \phi_N\dvtx  t\mapsto f^{-1}\bigl(tf(N)\bigr),
\end{equation}
where $f^{-1}$ denotes the inverse function of $f$. If
$(L^N_1(t),L^N_2(t))$ is the Markov process with $Q$-matrix~(\ref{Qf})
and initial condition $(0,N)$, then the convergence in distribution
\[
\lim_{N\to+\infty} \biggl(\frac{L^N_1 (\phi_N(t) )}{\phi
_N(t)}, 0<t<{\alpha^*} \biggr)=
\biggl(\lambda_1- \mu_1\frac
{t}{t+1}, 0<t<{
\alpha^*} \biggr)
\]
holds for the uniform topology on compact sets of $(0,{\alpha^*})$.
Indeed, the first relation in condition~(\ref{F1}) ensures that the second
coordinate $L_2^N$ stays at $N$ while $L_1^N$ is of the order of $\phi
_N(t)$, so that exactly the same techniques as in Section~\ref{ts1}
can be applied.

The second time scale of interest is $t\mapsto\psi_N t$, where
%
\begin{equation}
\label{tsf2} \psi_N=\frac{\phi_N({\alpha^*})f(N)}{A_f(\phi_N({\alpha^*}))}.
\end{equation}
If $(L^N_1(t),L^N_2(t))$ is the Markov process with $Q$-matrix~(\ref
{Qf}) and initial conditions $L_2(0)=N$ and $L_1(0)\sim\delta\phi
_N({\alpha^*})$ for some $\delta\in(0,1]$, then we conjecture that the
convergence in distribution of processes
\[
\lim_{N\to+\infty} \biggl(\frac{L_1^N(\psi_N t)}{\phi_N({\alpha^*}
)} \biggr) = \bigl(h(t)
\bigr),
\]
should hold, where $(h(t))$ is the function defined by
%
\begin{equation}
\label{eqhf} \cases{ h\equiv1,&\quad if $\delta=1$,
\vspace*{3pt}\cr
\displaystyle\int
_{\delta}^{h(t)}\frac{1}{B_f(u)} \,\diff u=-
\frac
{\mu t}{(1+{\alpha^*})^2}, &\quad if $\delta\neq1$.}
\end{equation}
Some regularity properties (required to use Lebesgue's differentiation
theorem, e.g.) are clearly necessary to justify this
convergence, but then the rest of the proof should follow the lines of
the proof of the corresponding result for the $\log$ case.

\subsubsection*{Some examples}
\begin{longlist}[(a)]
\item[(a) $f(x)=\log\log(x)$.]
In this case, $A_f(x)=1/\log x$ and $B_f(x)=\log x$, and we recover the
expression of the time scales obtain in Section~\ref{subsloglog} for
the $\log\log$ function, that is,
\[
\phi_N(t)= \exp\bigl[(\log N)^t\bigr]\quad\mbox{and}
\quad \psi_N=(\log N)^{\alpha^*}\log\log(N) \exp \bigl((\log
N)^{\alpha^*} \bigr).
\]
\item[(b) $f(x)=(\log x)^\beta$.]
Here, $A_f(x)=1/[\beta(\log x)^{\beta-1}]$ and $B_f(x)=\log x$, we
gives the two time scales
\[
\phi_N(t)= N^{t^{1/\beta}}\quad\mbox{and}\quad \psi_N=
\beta{\alpha^*}^{1-1/\beta} N^{{\alpha^*}^{1/\beta}}(\log N)^{2\beta-1}.
\]
In particular, we recover the time scales of the $\log$ case which
have already been identified.
\end{longlist}
Note that the functions $x\mapsto x^\alpha$, $\alpha>0$, do not
satisfy the first relation in condition~(\ref{F1}).

\section{The network with $J>2$ nodes}\label{Multisec}
In this section, we briefly describe the case of $J>2$ nodes competing
for the single resource. A special case is considered to illustrate the
similarities and also the differences in qualitative behaviors. The
motivation of this section is to show that the analysis of the two node
network gives the main ideas to start the investigation of more
complicated situations. New difficulties are indicated in the text, and
the proofs of these results will be the subject of a further work in a
more general setting.

Let us assume that
\[
\rho_1<\rho_2<\cdots<\rho_{J}\quad\mbox{and}
\quad\sum_{1}^{J}\rho_j<1.
\]
If the state of the system is $(n_j)_{1\leq j\leq J}$, the $k$th
station receives the fraction of service
\[
\frac{\log(1+n_k)}{\log(1+n_1)+\log(1+n_2)+\cdots+\log(1+n_{J})}.
\]
From now on, we consider the case where $L_j(0)=0$ for $j=1,\ldots,
J-1$, and $L_{J}(0)=N$. As before $(L^N_j(t))$ denotes the Markov
process describing the number of requests in each queue and with this
initial condition.

\subsection{Initial phase}
The following proposition is analogous to Proposition~\ref{initprop},
and can be proved in the same way.

\begin{proposition}\label{initpropJ}
If $(L^N_j(t))$ is the solution of the SDE~(\ref{SDE}) with the
initial condition $L_j(0)=0$ for $1\leq j\leq J-1$, and $L_{J}=N$, and if
\[
t_1\stackrel{\mathit{def.}} {=}\frac{\rho_1}{1-(J-1)\rho_1} <1,\qquad\mbox{i.e., } \rho_1<\frac{1}{J},
\]
then, for every $1\leq j\leq J-1$, the convergence
\[
\lim_{N\to+\infty} \biggl(\frac{L^N_j(N^t)}{N^t}, 0<t<t_1
\biggr)= \biggl(\lambda_j- \mu_j\frac{t}{(J-1)t+1},
0<t<t_1 \biggr)
\]
holds for the uniform topology on compact sets of $(0,t_1)$.
\end{proposition}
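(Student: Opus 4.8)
The plan is to follow the one-dimensional analysis of Propositions~\ref{asympX} and~\ref{initprop}, the only genuinely new feature being that the $J$ growing coordinates are coupled through the common denominator $\sum_{k=1}^{J+1}\log(1+L_k^N)$. Since $\rho_1<1/(J+1)$ we have $t_1<1$, so one may fix $b\in(t_1,1)$. On the interval $[0,N^b]$ the coordinate $L_{J+1}^N$ changes by at most ${\cal N}_{\lambda_{J+1}}([0,N^b])+\mu_{J+1}N^b=o(N)$, so, exactly as in the proof of Proposition~\ref{initprop}, there are constants $0<\eta<\gamma$ for which the event ${\cal A}_N=\{\eta N\le\inf_{s\le N^b}L_{J+1}^N(s)\le\sup_{s\le N^b}L_{J+1}^N(s)\le\gamma N\}$ has probability tending to $1$; on ${\cal A}_N$ one has $\log(1+L_{J+1}^N(s))=\log N+O(1)$ uniformly. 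Writing $Z_j^N(t)=(1+L_j^N(N^t))/N^t$ for $1\le j\le J$ and integrating the SDE~\eqref{SDE} with the change of variables $s=N^t$ as in~\eqref{eqZ1}, each $Z_j^N$ satisfies
\[
Z_j^N(t)=\big(\text{terms vanishing as }N\to\infty\big)+\lambda_j-\mu_j\int_0^t W_j\big(N^{t-v}\big)\,(\log N)N^{-v}\,\diff v,
\]
where, setting $1+L_k^N(N^s)=Z_k^N(s)N^s$, one has on ${\cal A}_N$
\[
W_j(N^s)=\frac{s\log N+\log Z_j^N(s)}{(Js+1)\log N+\sum_{k=1}^{J}\log Z_k^N(s)+O(1)}.
\]
The heuristic is transparent: if every $Z_k^N(s)$ stays bounded away from $0$ and $\infty$, then $W_j(N^s)\to s/(Js+1)$ and, since $(\log N)N^{-v}\,\diff v$ concentrates at $v=0$, the drift of $Z_j^N$ converges to $\lambda_j-\mu_j t/(Jt+1)$. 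This quantity is positive on $(0,t_1)$ precisely because $t_1=\rho_1/(1-J\rho_1)$ is the smallest of the thresholds $\rho_k/(1-J\rho_k)$, the map $\rho\mapsto\rho/(1-J\rho)$ being increasing.

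The upper bounds are routine. From $L_k^N(N^s)\le{\cal N}_{\lambda_k}([0,N^s])$ and the law of large numbers for Poisson processes (via the tightness of the variables $S_N(b)$, as in Proposition~\ref{asympX}), one gets $Z_k^N(s)\le C$ uniformly for $k\le J$ and $s\le b$ with probability tending to $1$; the non-negativity of the integral then yields $Z_j^N(t)\le\lambda_j+\eps$ directly. The martingale contribution is discarded exactly as in Proposition~\ref{asympX}, by Doob's inequality together with the bracket estimate~\eqref{croc}.

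The main obstacle, and the only point where the multi-node structure really bites, is the matching \emph{lower} bound on all coordinates simultaneously: obtaining the tight denominator $(Js+1)\log N$ needed to bound $W_j$ from above requires lower bounds on every $Z_k^N$, so the estimate is self-referential. I would close this loop by a stopping-time argument. Set $z_j(t)=\lambda_j-\mu_j t/(Jt+1)$, fix $0<a<b'<t_1$, put $m=\min_{j\le J}z_j(b')>0$ and choose $c\in(0,m)$ small; an initial crude estimate, keeping only the $(J{+}1)$st term in the denominator, shows $\min_k Z_k^N(a)>c$ with high probability for $a$ small. Define $T_N=\inf\{t\ge a:\min_k Z_k^N(t)<c\}$. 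On ${\cal A}_N\cap\{t<T_N\}$ all $Z_k^N$ exceed $c$ on $[a,t]$, so $\sum_k\log Z_k^N\ge J\log c$ and hence $W_j(N^s)\le s/(Js+1)+\eps_N$ with $\eps_N\to0$ uniformly in $s\in[a,b']$; since the kernel $(\log N)N^{-v}\,\diff v$ puts mass $O(N^{-(t-a)})$ on $[t-a,t]$ where $W_j\le1$, a Dirac-type approximation gives $Z_j^N(t)\ge z_j(t)-o(1)\ge m-o(1)>c$ for every $t<T_N$ in $[a,b']$. As the downward jumps of $Z_j^N$ are of size $O(N^{-a})\to0$, this is incompatible with $Z_k^N(T_N)<c$, forcing $\P(\{T_N\le b'\}\cap{\cal A}_N)\to0$. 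Thus every $Z_k^N$ is boxed in $[c,C]$ on $[a,b']$ with high probability, whence $\log Z_k^N=O(1)$; plugging this back into the equation for $Z_j^N$, the integrand in $W_j$ converges to $(t-v)/(J(t-v)+1)$ for $t-v\ge a$ while the contribution of $t-v<a$ is negligible, and the Dirac approximation of the kernel gives $Z_j^N(t)\to\lambda_j-\mu_j t/(Jt+1)$. Uniform convergence on $[a,b']$, and hence on compact subsets of $(0,t_1)$, then follows from the modulus-of-continuity tightness used in Proposition~\ref{Fluid}, the limit being deterministic.
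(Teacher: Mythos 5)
Your proof is correct in substance and follows the route the paper intends: the paper gives no argument for Proposition~\ref{initpropJ} beyond the remark that it can be proved in the same way as Proposition~\ref{initprop}, and your write-up supplies exactly the one ingredient a literal transcription would miss. In the two-node case the lower bound on $Z_N$ comes for free because $W_1(x,N)$ is increasing in $x$, so Poisson upper bounds on the queue suffice to control the integrand; with $J$ growing coordinates, $W_j$ is \emph{decreasing} in the other coordinates, and dropping them from the denominator bounds $W_j(N^s)$ only by $s/(s+1)+o(1)$, which keeps all drifts positive merely on $(0,\rho_1/(1-\rho_1))$ --- a strictly shorter interval than $(0,t_1)$ since $t_1=\rho_1/(1-J\rho_1)>\rho_1/(1-\rho_1)$ when $J\geq 2$. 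Your stopping-time bootstrap (crude bound to seed $\min_k Z_k^N(a)>c$, self-consistent denominator $(Js+1)\log N+O(1)$ up to $T_N$, contradiction at $T_N$ using the $O(N^{-a})$ jump size of $Z_k^N$) is the right way to close this self-referential loop, and the remaining steps (the event ${\cal A}_N$, Doob's inequality for the martingale terms via the bracket~\eqref{croc}, the Dirac approximation of the kernel $(\log N)N^{-v}\,\diff v$) match Propositions~\ref{asympX} and~\ref{initprop}.

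One small slip to repair: in the bootstrap you bound the contribution of $v\in[t-a,t]$ by ``$W_j\leq 1$ times kernel mass $O(N^{-(t-a)})$'', but this mass equals $N^{-(t-a)}-N^{-t}$ and is \emph{not} uniformly small for $t$ near $a$ (at $t=a$ it is of order $1$), so as written the conclusion $Z_j^N(t)\geq z_j(t)-o(1)$ fails just above $a$, and the contradiction is not secured if $T_N$ falls there. The fix is immediate and within your own toolkit: on $s\in[0,a]$ the crude bound $W_j(N^s)\leq a/(a+1)+o(1)$ holds on ${\cal A}_N$ irrespective of $T_N$, so the $[t-a,t]$ portion of the integral contributes at most $\mu_j\,a/(a+1)+o(1)$; choosing $a$ small enough that $z_j(t)-\mu_j\,a/(a+1)>c$ on $[a,b']$ for every $j\leq J$ (equivalently, running the contradiction on $[2a,b']$ after verifying $[a,2a]$ by the crude bound) restores the argument verbatim. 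With this adjustment the proof is complete; since $a>0$ is arbitrary, uniform convergence on every compact subset of $(0,t_1)$ follows as you state, and because the limit is deterministic you could even dispense with the modulus-of-continuity tightness in favor of the direct squeeze used in Proposition~\ref{asympX}.
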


Observe that the quantity $t_1$ is precisely the first time $t$ for which
\[
\lambda_1=\mu_1\frac{t}{1+(J-1) t}.
\]
%
\subsection{Second phase}
Let us now give a more heuristic description of the evolution of the
network after ``time'' $N^{t_1}$, but still on the time scale $t\mapsto
N^t$. As we shall see, for the second phase the exponent in $N$ of the
random variables $(L_j(N^t),2\leq j\leq J-1)$ are still $t$, but the
exponent $\alpha_{2,1}(t)$ of $L_1^N$ becomes a linear function of $t$
with slope less than $1$.

Let us use the different phenomena observed in the two node case to
infer the behavior of the $J$-node system.

First, since $\rho_j>\rho_1$ for every $j\geq2$, the infinitesimal
drift of $L_j(N^t)$ remains positive at least for a small amount of
time after $t_1$. Hence, one should have the following convergence in
distribution: for $2\leq j\leq J-1$,
\begin{eqnarray*}
&& \lim_{N\to+\infty} \biggl(\frac{L^N_j(N^t)}{N^t}, t_1<t<t_2
\biggr)
\\
&&\qquad = \biggl(\lambda_j- \mu_j\frac{t}{\alpha_{1,2}(t)+(J-2)t+1},
t_1<t<t_2 \biggr),
\end{eqnarray*}
where $t_2$ is the first time $t$ at which
\[
\lambda_2=\mu_2\frac{t}{\alpha_{1,2}(t)+(J-2)t+1}.
\]
Second, the station $1$ should remain at a local equilibrium in the
sense that
the coefficient $\alpha_{1,2}(t)$ should be determined as follows:
\[
\lambda_1=\mu_1\frac{\alpha_{1,2}(t)}{\alpha_{1,2}(t)+(J-2)t+1}.
\]
Combining these relations, we obtain that
\[
\alpha_{1,2}(t)=\frac{\rho_1}{1-\rho_1}\bigl(1+(J-2)t\bigr)\quad\mbox {and}
\quad t_2=\frac{\rho_2}{1-\rho_1-(J-2)\rho_2}.
\]
Of course, $t_2$ has to be strictly less than $1$.

\subsection{Subsequent phases}
Let us now give a, still heuristic, description of the $k$th phase for
$1< k<J-1$. The first $k-1$ stations are at a ``local'' equilibrium.
Denoting the exponent of the $j$th station in the $k$th phase by
$\alpha_{j,k}(t)$, the equilibrium is characterized by the relation
\[
\frac{\alpha_{j,k}(t)}{\alpha_{1,k}(t)+\alpha_{2,k}(t)+\cdots
+\alpha_{k-1,k}(t)+(J-k)t+1}=\rho_j
\]
for $1\leq j\leq k-1$. The end of the $k$th phase, $t_{k}$, corresponds
to the situation when the $k$th station reaches an equilibrium, that
is, $t_k$ satisfies
\[
\frac{t_{k}}{\alpha_{1,k}(t_{k})+\alpha_{2,k}(t_{k})+\cdots+\alpha
_{k-1,k}(t_{k})+(J-k)t_{k}+1}=\rho_{k}.
\]
Consequently, we obtain that
\[
\alpha_{j,k}(t)=\frac{\rho_j}{1-\sum_{i=1}^{k-1} \rho_i}\bigl(1+(J-k) t\bigr), \qquad1\leq
j\leq k-1
\]
and
\[
t_{k}=\frac{\rho_{k}}{1-\sum_{i=1}^{k-1}\rho_i-(J-k)\rho_{k}}.
\]
The time $t_{k}$ is such that $t_{k}<1$ if
\[
\sum_{i=1}^{k-1}\rho_i+
(J-k+1)\rho_{k}<1.
\]
%
\subsection{Final phase}
Provided that $t_{J-1}$ is strictly less that $1$, at the time
$N^{t_{J-1}}$ all the stations are at a local equilibrium around
$N^{\alpha_{j,J}}$ where $\alpha_{j,J}$ is given by
\[
\alpha_{j,J}=\frac{\rho_j}{1-\sum_{i=1}^{J-1} \rho_i}.
\]
Keep in mind that strictly speaking, the local equilibrium of the $j$th
station is on the time scale $t\mapsto N^{\alpha_{j,J}}\log N t$.
Hence, the whole process can be thought as a collection of stationary
processes evolving on different time scales. See Figure~\ref{Brown}.

\begin{figure}

\includegraphics{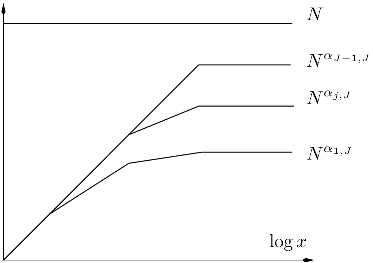}

\caption{The network with $J>2$ nodes on the $t\mapsto N^t$ time scale
when $t_{J-1}<1$.}\label{Brown}
\end{figure}

The difficult technical problem to solve here is for the $J$ phases in
between, during which some of the exponents depend on time, adapting to
the linear growth of the other exponents. One of our main problems
throughout this work has been that we did not succeed in proving
convergence of the $\log(L_j)/\log N$ variables without showing a more
demanding result, namely a convergence result for the variables $L_j$.
This difficulty is even more serious here since the exponents depend on time.


\section*{Acknowledgments}
P.~Robert would like to thank Damon Wischik, whose presentation
at the ICMS workshop in Edinburgh in 2010 is one of the motivations at
the origin of this work.
We thank the reviewer for the detailed work she/he has done on a first
version of the paper.


%

\printaddresses
\end{document}